\newcommand{\usepackageifexists}[1]{%
    \IfFileExists{#1.sty}{\usepackage{#1}}%
       {\GenericInfo{taglia}{Il package #1 non esiste.}}}
\theoremstyle{plain}
\newtheorem{Thm}{Theorem}
\newtheorem*{Thm*}{Theorem}
\newtheorem{Prop}{Proposition}
\newtheorem{Lem}[Prop]{Lemma}
\theoremstyle{definition}
\newtheorem{Def}{Definition}
\theoremstyle{remark}
\newtheorem{Rem}{Remark}
\newtheorem{Example}{Example}
\newcommand{\N}{\mathbb{N}}
\newcommand{\R}{\mathbb{R}}
\newcommand{\Rn}{{\R^n}}
\renewcommand{\doteq}{:=}
\def\<#1\>{\left\langle#1\right\rangle }
\begin{document}

\title[Semi-linear structural damped waves]
          {Semi-linear structural damped waves}
\author[M. D'Abbicco, M. Reissig]%
    {Marcello D'Abbicco, Michael Reissig}

\address{Marcello D'Abbicco, Department of Mathematics, University of Bari, Via E. Orabona 4 - 70125 BARI -
ITALY}

\address{Michael Reissig, Faculty for Mathematics and Computer
Science, Technical University Bergakademie Freiberg, Pr\"uferstr.9 -
09596 FREIBERG - GERMANY}
\begin{abstract}
We study the Cauchy problem for the semi-linear structural damped
wave equation with source term
\[ u_{tt}-\triangle u+ \mu(-\Delta)^\sigma u_t=f(u), \quad u(0,x)=u_0(x), \quad u_t(0,x)=u_1(x), \]
with $\sigma \in (0,1]$ in space dimension~$n\geq2$ and with a
positive constant $\mu$. We are interested in the influence of
$\sigma$ on the critical exponent $p_{crit}$ in $|f(u)|\approx
|u|^p$. This critical exponent is the threshold between global
existence in time of small data solutions and blow-up behavior for
some suitable range of $p$. Our results are optimal for
parabolic-like models $\sigma \in (0,1/2]$.
\end{abstract}

\keywords{semi-linear wave models, structural damped wave equations,
critical exponent, global existence, blow-up, test function method}

\subjclass[2010]{35L71 Semi-linear second-order hyperbolic
equations}

\maketitle

\maketitle


\section{Introduction}\label{sec:intro}
\noindent In this paper, we consider the Cauchy problem
\begin{equation}
\label{eq:diss}
\begin{cases}
u_{tt}-\Delta u+\mu(-\Delta)^\sigma u_t=f(u), & t\geq0, \ x\in\R^n,\\
u(0,x)=u_0(x), \\
u_t(0,x)=u_1(x),
\end{cases}
\end{equation}
in space dimension~$n\geq1$ with~$\sigma\in(0,1]$, where~$\mu>0$ is a constant and the nonlinear term satisfies
\begin{equation}\label{eq:disscontr}
f(0)=0, \qquad |f(u)-f(v)|\lesssim |u-v|(|u|+|v|)^{p-1},
\end{equation}
for a given $p>1$. The case~$\sigma=1$ corresponds to the wave equation with \emph{visco-elastic} damping~\cite{Shibata}, whereas for~$\sigma\in(0,1)$ we are dealing with a \emph{structural damping}. Our aim is to prove global existence result of small data solution for~$p>\overline{p}(\sigma,n)$ in low space dimension. In order to do this we derive suitable estimates for the corresponding linear problem and we prove that the solution to the semilinear one satisfies the same estimates. Moreover, we also prove that our exponent $\overline{p}(\sigma,n)$ is \emph{critical} for~$\sigma\in(0,1/2]$.


\subsection{Main results} \label{Sec1.1}

The following definition would fix the energy space for the data in our statements.
\begin{Def}
We define
\begin{equation}\label{eq:Dm}
\mathcal{D}_m^k \doteq (L^1\cap H^{k,m}) \times (L^1\cap L^m),\qquad  \|(v_0,v_1)\|_{\mathcal{D}_m^k} \doteq \|v_0\|_{L^1}+\|v_0\|_{H^{k,m}}+\|v_1\|_{L^1}+\|v_1\|_{L^m}
\end{equation}
for~$m\in (1,2]$ and~$k\geq0$.
\end{Def}
For the sake of clarity, we will denote by $L^1\cap L^m$ the space~$\mathcal{D}_m^0= (L^1\cap L^m)\times (L^1\cap L^m)$.
We remark that~$\mathcal{D}_2^1 = (L^1\cap H^1) \times (L^1\cap L^m)$ corresponds to the \emph{classical energy space} $H^1\times L^2$
with additional~$L^1$ regularity.
\\
Distinguishing four models we can now present our main results.
\begin{Thm}\label{Thm:halfnl}
Let~$\sigma=1/2$ in~\eqref{eq:diss}. Let $n=2,3,4$ and let $m\in(1,2]$. Let $p\in [m,n/(n-m)]$ be such that
\begin{equation}\label{eq:phalf}
p > 1+ \frac2{n-1}.
\end{equation}
Then there exists~$\epsilon>0$ such that for any~$(u_0,u_1)\in\mathcal{D}_m^1$ with $\|(u_0,u_1)\|_{\mathcal{D}_m^1}<\epsilon$, there exists a unique
solution ~$u \in \mathcal{C}([0,\infty),H^{1,m})\cap\mathcal{C}^1([0,\infty),L^m)$ to~\eqref{eq:diss}.
\\
Moreover, the solution and its energy based on the~$L^m$ norm satisfy the estimates
\begin{align}
\label{eq:decu}
\|u(t,\cdot)\|_{L^m}
    & \leq C (1+t)^{1-n(1-\frac1m)} \|(u_0,u_1)\|_{\mathcal{D}_m^1}, \\
\label{eq:decgradu}
\|\bigl(\nabla u(t,\cdot), u_t(t,\cdot)\bigr)\|_{L^m}
    & \leq C (1+t)^{-n(1-\frac1m)} \|(u_0,u_1)\|_{\mathcal{D}_m^1},
\end{align}
where~$C>0$ does not depend on the data.
\end{Thm}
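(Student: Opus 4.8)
The plan is to prove global existence by a standard contraction-mapping argument built on top of sharp linear decay estimates, so the first task is to establish those estimates. I would write the solution to \eqref{eq:diss} via Duhamel's principle as
\[
u(t,\cdot) = K_0(t,\cdot)\ast u_0 + K_1(t,\cdot)\ast u_1 + \int_0^t K_1(t-s,\cdot)\ast f(u(s,\cdot))\,ds,
\]
where $K_0,K_1$ are the linear propagators. For $\sigma=1/2$ the symbol of the linear operator factors cleanly: the characteristic roots of $\tau^2+\mu|\xi|\,\tau+|\xi|^2=0$ are $\tau_\pm = \tfrac12(-\mu|\xi|\pm|\xi|\sqrt{\mu^2-4})$, which are real and \emph{homogeneous of degree one} in $\xi$ for $\mu\neq 2$ (and a double root for $\mu=2$). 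This homogeneity is exactly what makes $\sigma=1/2$ the borderline parabolic case and is the feature I would exploit. Splitting frequency space into low and high zones, I expect the low-frequency part to behave like the heat kernel (giving the polynomial decay rate $(1+t)^{-n(1-1/m)}$ after interpolating the $L^1\to L^m$ and $L^m\to L^m$ bounds through the Marcinkiewicz/Mihlin multiplier theorem), while the high-frequency part is controlled by the exponential damping $e^{t\,\mathrm{Re}\,\tau_-}$ and contributes only smoothing, faster-decaying terms. Carrying out this zone decomposition and verifying the multiplier estimates is the technical backbone; I would summarize it as the linear decay estimates \eqref{eq:decu}--\eqref{eq:decgradu} applied to the homogeneous part.

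With the linear estimates in hand, the nonlinear step is to set up the solution space
\[
X(T) \doteq \mathcal{C}([0,T],H^{1,m})\cap\mathcal{C}^1([0,T],L^m)
\]
equipped with the weighted norm
\[
\|u\|_{X(T)} \doteq \sup_{0\le t\le T}\Bigl\{ (1+t)^{n(1-\frac1m)-1}\|u(t,\cdot)\|_{L^m} + (1+t)^{n(1-\frac1m)}\|(\nabla u(t,\cdot),u_t(t,\cdot))\|_{L^m}\Bigr\},
\]
so that the linear estimates say precisely that the free solution lies in $X(\infty)$ with norm $\lesssim \|(u_0,u_1)\|_{\mathcal{D}_m^1}$. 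I would then define the map $N[u]$ as the full Duhamel expression above and prove that $N$ maps a small ball of $X(T)$ into itself and is a contraction there, uniformly in $T$; letting $T\to\infty$ then yields the global solution. The two inequalities to verify are
\[
\|N[u]\|_{X(T)} \lesssim \|(u_0,u_1)\|_{\mathcal{D}_m^1} + \|u\|_{X(T)}^p, \qquad \|N[u]-N[v]\|_{X(T)} \lesssim \|u-v\|_{X(T)}\bigl(\|u\|_{X(T)}+\|v\|_{X(T)}\bigr)^{p-1}.
\]

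The crux is estimating the Duhamel integral, and this is where the hypotheses on $p$ enter. To bound $\|f(u(s,\cdot))\|_{L^1\cap L^m}$ I would use \eqref{eq:disscontr}, which gives $|f(u)|\lesssim|u|^p$, together with the Gagliardo--Nirenberg inequality to interpolate $\|u(s,\cdot)\|_{L^{mp}}$ and $\|u(s,\cdot)\|_{L^p}$ between $\|u(s,\cdot)\|_{L^m}$ and $\|\nabla u(s,\cdot)\|_{L^m}$; the restriction $p\le n/(n-m)$ is exactly what keeps these Sobolev embeddings admissible, and $p\ge m$ guarantees $f(u)\in L^1$. Feeding the resulting decay rate $\|f(u(s,\cdot))\|_{L^1\cap L^m}\lesssim (1+s)^{-\alpha}\|u\|_{X(s)}^p$ into the linear estimate and splitting the time integral at $s=t/2$, the two pieces converge provided the total exponent exceeds one; this integrability condition is what forces the subcriticality bound \eqref{eq:phalf}, $p>1+2/(n-1)$. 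The main obstacle I anticipate is precisely this bookkeeping of decay exponents across the $[0,t/2]$ and $[t/2,t]$ ranges — ensuring the loss of one power of $(1+t)$ (the factor $(1+t)^{1}$ in \eqref{eq:decu} compared with the gradient estimate) is absorbed and that no logarithmic divergence appears at the endpoint — rather than any single estimate in isolation.
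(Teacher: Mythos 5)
Your nonlinear argument is essentially the paper's: the same solution space and weighted norm, the same contraction-mapping scheme, the same splitting of the Duhamel integral at $s=t/2$ with the $(L^1\cap L^m)$--$L^m$ estimates on $[0,t/2]$ and the $L^m$--$L^m$ estimates on $[t/2,t]$, and the same use of Gagliardo--Nirenberg with $k=1$ at $q=p$ and $q=mp$ to turn $p\in[m,n/(n-m)]$ and \eqref{eq:phalf} into the required integrability of the decay rates. Where you diverge is the linear part. The paper does not perform a low/high frequency zone decomposition at all: for $\sigma=1/2$ the symbol is exactly homogeneous of degree one, so the propagators are explicit Poisson-type kernels, $\mathcal{F}^{-1}(e^{-2\pi|\xi|t})=c_n t(t^2+|x|^2)^{-(n+1)/2}$, and both the $L^1\to L^m$ and $L^m\to L^m$ bounds follow in one line from Young's inequality applied to \eqref{eq:w1}--\eqref{eq:w'}. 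Your multiplier-theorem route can be made to work but is heavier, and the "low frequencies behave like the heat kernel / high frequencies are exponentially damped and smoothing" picture is not accurate here --- there is no scale separation, and the kernel is Poisson, not Gaussian. One caveat you should fix: the roots $\tau_\pm=\tfrac12\bigl(-\mu\pm\sqrt{\mu^2-4}\bigr)|\xi|$ are real only for $\mu\geq2$; for $\mu<2$ they are complex, and the resulting oscillatory factor $e^{\pm ic|\xi|t}$ is not by itself an $L^m$ multiplier for $m\neq2$, so one must exploit the accompanying damping $e^{-\mu|\xi|t/2}$ (this is precisely the content of the estimates the paper imports from \cite{NR}). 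With that repaired, your outline is a valid alternative derivation of Theorem~\ref{Thm:halflin}, and the rest of your proof matches the paper's.
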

The exponent which is given by~\eqref{eq:phalf} is \emph{critical} (see later, Theorem~\ref{Thm:blow}).
\begin{Rem} \label{Remmixing} We want to underline that the results from Theorem \ref{Thm:halfnl} base on the mixing of different
regularity for the data, where the data do not necessarily belong to the classical energy space $H^1 \times L^2$.
\end{Rem}
\begin{Thm}\label{Thm:Shinl}
Let~$\sigma=1$ in~\eqref{eq:diss}. Let $n\geq2$ and let $p\in [2,n/(n-4)]$ be such that
\begin{equation}\label{eq:pShi}
p > 1+ \frac3{n-1}.
\end{equation}
Then there exists~$\epsilon>0$ such that for any~$(u_0,u_1)\in\mathcal{D}_2^2$ with $\|(u_0,u_1)\|_{\mathcal{D}_2^2}<\epsilon$, there exists a unique
solution~$u \in \mathcal{C}([0,\infty),H^2)\cap\mathcal{C}^1([0,\infty),L^2)$ to~\eqref{eq:diss}.
\\
Moreover, the solution, its first derivative in time, and its derivatives in space up to the second order, satisfy the decay estimates
\begin{align}
\label{eq:uShi}
\|u(t,\cdot)\|_{L^2}
    & \leq \begin{cases}
        C (1+t)^{-\frac{n-2}4} \, \|(u_0,u_1)\|_{L^1\cap L^2} & \text{if~$n\geq3$,}\\
        C \log (e+t) \, \|(u_0,u_1)\|_{L^1\cap L^2} & \text{if~$n=2$,}
        \end{cases} \\
\label{eq:utShi}
\|u_t(t,\cdot)\|_{L^2}
    & \leq C (1+t)^{-\frac{n}4} \, \|(u_0,u_1)\|_{L^1\cap L^2},\\
\label{eq:uxShi}
\|\nabla u(t,\cdot)\|_{L^2}
    & \leq C (1+t)^{-\frac{n}4} \|(u_0,u_1)\|_{\mathcal{D}_2^1},\\
\label{eq:uxxShi}
\|\nabla^2 u(t,\cdot)\|_{L^2}
    & \leq C (1+t)^{-\frac{n+2}4} \|(u_0,u_1)\|_{\mathcal{D}_2^2},
\end{align}
where~$C>0$ does not depend on the data.
\end{Thm}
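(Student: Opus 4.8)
The plan is to prove Theorem~\ref{Thm:Shinl} by the standard contraction-mapping scheme in an energy space whose norm encodes the decay rates~\eqref{eq:uShi}--\eqref{eq:uxxShi}, reducing the nonlinear problem to linear decay estimates plus the duality~$L^1\cap L^2$ structure of the data. First I would fix the linear estimates for the visco-elastic model~$\sigma=1$: writing the solution of the homogeneous problem via its Fourier representation, one splits frequency space into a low-frequency zone~$\abs{\xi}\leq\delta$ and a high-frequency zone~$\abs{\xi}\geq\delta$. In the low-frequency zone the two characteristic roots behave like~$-\abs{\xi}^2$ (diffusive) and~$-\mu$ (exponentially decaying), so the multiplier decays like the heat kernel; combining this with the embedding~$L^1\hookrightarrow\widehat{L^\infty}$ and the~$L^2$ norm of frequency powers yields the polynomial rates~$(1+t)^{-n/4}$, with the logarithmic loss in~\eqref{eq:uShi} for~$n=2$ coming from the borderline integrability of~$\abs{\xi}^{-2}$ near the origin. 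In the high-frequency zone one gets exponential decay from the~$(-\Delta)$-damping, so only the low-frequency part dictates the rates; the differing powers of~$\abs{\xi}$ produced by~$\nabla$, by~$u_t$, and by~$\nabla^2$ account for the shifts by~$1/2$ and~$1$ in the exponents, and for the fact that~$\nabla^2$ requires the extra~$H^2$ regularity recorded in~$\mathcal{D}_2^2$.

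Next I would set up the solution space. Let~$X(t)$ be the space of functions with norm
\[
\|u\|_{X(t)} \doteq \sup_{0\leq\tau\leq t}\Bigl( g_0(\tau)^{-1}\|u(\tau,\cdot)\|_{L^2} + (1+\tau)^{\frac n4}\|u_t(\tau,\cdot)\|_{L^2} + (1+\tau)^{\frac n4}\|\nabla u(\tau,\cdot)\|_{L^2} + (1+\tau)^{\frac{n+2}4}\|\nabla^2 u(\tau,\cdot)\|_{L^2}\Bigr),
\]
where~$g_0(\tau)=(1+\tau)^{-(n-2)/4}$ for~$n\geq3$ and~$g_0(\tau)=\log(e+\tau)$ for~$n=2$. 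Using Duhamel's principle, I would define the operator~$N$ sending~$u$ to the sum of the linear evolution of the data and the time integral of the evolution operator applied to~$f(u)$. Then the heart of the argument is the two inequalities
\[
\|Nu\|_{X(t)}\lesssim \|(u_0,u_1)\|_{\mathcal{D}_2^2} + \|u\|_{X(t)}^p,\qquad
\|Nu-Nv\|_{X(t)}\lesssim \|u-v\|_{X(t)}\bigl(\|u\|_{X(t)}+\|v\|_{X(t)}\bigr)^{p-1},
\]
from which a standard fixed-point/continuation argument on a small ball produces the unique global solution with the stated decay.

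To establish those two inequalities I would apply the linear estimates to the inhomogeneous term, estimating~$\|f(u)(\tau,\cdot)\|_{L^1\cap L^2}$ by means of~\eqref{eq:disscontr}, which gives~$\abs{f(u)}\lesssim\abs{u}^p$, so that~$\|f(u)\|_{L^1}\lesssim\|u\|_{L^p}^p$ and~$\|f(u)\|_{L^2}\lesssim\|u\|_{L^{2p}}^p$. The Gagliardo--Nirenberg inequality converts these~$L^p$ and~$L^{2p}$ norms into products of~$\|u\|_{L^2}$ and~$\|\nabla^2 u\|_{L^2}$ with the right interpolation weights, and here the admissible range~$p\in[2,n/(n-4)]$ is exactly what guarantees that the relevant exponents lie in the range where Gagliardo--Nirenberg applies with the~$H^2$ data. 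The condition~$p>1+3/(n-1)$ in~\eqref{eq:pShi} is what forces the resulting power of~$(1+\tau)$ in the Duhamel integrand to be integrable (up to the~$n=2$ logarithmic endpoint), so that the time integral of the decay rates converges and reproduces the target rate. The main obstacle, as usual in this scheme, is precisely this bookkeeping of exponents: verifying that for every component of the~$X(t)$-norm the integral~$\int_0^t (\text{linear rate at time }t-\tau)\cdot(\text{nonlinear rate at time }\tau)\,d\tau$ decays no slower than the weight attached to that component, and that the borderline~$n=2$ and the endpoint~$p=n/(n-4)$ cases close with the logarithmic factor; the difference between integrating over~$[0,t/2]$ and~$[t/2,t]$ (where one uses the~$L^1$ part of the linear estimate on the first half and the~$L^2$ part on the second) is where the argument is most delicate.
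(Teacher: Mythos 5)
Your overall scheme coincides with the paper's: a contraction argument in the weighted space $X(t)$ with exactly the weights you list, Duhamel's principle, Gagliardo--Nirenberg with two derivatives (which is where the constraint $p\in[2,n/(n-4)]$ comes from), and the split of the Duhamel integral at $s=t/2$ with the $(L^1\cap L^2)$--$L^2$ estimate on the first half. The condition \eqref{eq:pShi} does indeed reduce to integrability of $(1+s)^{-\frac{p(n-1)-n}{2}}$, as you say. However, there is one concrete point where your plan, as stated, would fail. You propose to use ``the $L^2$ part of the linear estimate on the second half'' $[t/2,t]$ for \emph{every} component of the $X(t)$-norm. For the derivative components this is what the paper does, using \eqref{eq:vtShi2}--\eqref{eq:vxxShi2}. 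But for the $L^2$ norm of $Nu$ itself the required $L^2$--$L^2$ estimate would be $\|v(t,\cdot)\|_{L^2}\lesssim (1+t)^{1/2}\|(v_0,v_1)\|_{L^2}$, and the paper explicitly states (Remark~\ref{Rem:NR22}) that it is \emph{not} able to prove this for $n\geq2$; only the much weaker \eqref{eq:NR22} is available. The paper therefore uses the $(L^1\cap L^2)$--$L^2$ estimate \eqref{eq:vShi} on \emph{all} of $[0,t]$ for this component, and then must check by hand that
\begin{equation*}
(1+t)^{-\frac{p(n-1)-n}{2}}\int_{t/2}^{t}(1+t-s)^{-\frac{n-2}{4}}\,ds
\end{equation*}
is still controlled by $(1+t)^{-\frac{n-2}{4}}$; this requires a case distinction on whether $(1+t-s)^{-\frac{n-2}{4}}$ is integrable over $[t/2,t]$ ($n\leq5$, $n=6$ with a logarithm, $n\geq7$), and in the case $n\geq7$ one needs the additional input $p\geq2$ rather than only \eqref{eq:pShi}. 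Your outline does not address this, and it is the one genuinely non-routine step in the proof of this theorem.

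A secondary inaccuracy: your description of the low-frequency spectral picture is the one for $\sigma=0$, not $\sigma=1$. For the visco-elastic model the characteristic roots at small $\abs{\xi}$ are complex conjugates, approximately $-\mu\abs{\xi}^2/2\pm i\abs{\xi}$, so the relevant kernel behaves like $e^{-\mu\abs{\xi}^2t/2}\sin(\abs{\xi}t)/\abs{\xi}$. The factor $\abs{\xi}^{-1}$ is precisely why the solution itself decays only like $(1+t)^{-\frac{n-2}{4}}$ (not the heat rate $(1+t)^{-\frac{n}{4}}$ your heuristic suggests) and why the $n=2$ logarithm appears. This does not affect the nonlinear argument, since the paper imports the linear estimates from Shibata's work, but the weaker rate for $\|u\|_{L^2}$ is exactly what creates the difficulty described above.
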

\begin{Rem} \label{higherorder} We want to underline, that the results from Theorem \ref{Thm:Shinl} base on the one hand on the use of
higher order of regularity, namely, second order in space, and on the other hand on the mixing of different regularity for the data, where the data belong
to the classical energy space, too.
\end{Rem}
\begin{Thm}\label{Thm:parnl}
Let~$\sigma\in(0,1/2)$ in~\eqref{eq:diss}. Let $n=2,3,4$ and let $p\in [2,n/(n-2)]$ be such that
\begin{equation}\label{eq:ppar}
p > 1+ \frac2{n-2\sigma}.
\end{equation}
Then there exists~$\epsilon>0$ such that for any~$(u_0,u_1)\in\mathcal{D}_2^1$ with $\|(u_0,u_1)\|_{\mathcal{D}_2^1}<\epsilon$, there exists a unique
solution~$u \in \mathcal{C}([0,\infty),H^1)\cap\mathcal{C}^1([0,\infty),L^2)$ to~\eqref{eq:diss}.
\\
Moreover, the solution, and its first derivatives in time and space satisfy the decay estimates
\begin{align}
\label{eq:upar}
\|u(t,\cdot)\|_{L^2}
    & \lesssim (1+t)^{-\left(\frac{n}4-\sigma\right)\,\frac1{1-\sigma}} \|(u_0,u_1)\|_{L^1\cap L^2}, \\
\label{eq:utpar}
\|u_t(t,\cdot)\|_{L^2}
    & \lesssim (1+t)^{-1} \, \|(u_0,u_1)\|_{\mathcal{D}_2^1},\\
\label{eq:uxpar}
\|\nabla u(t,\cdot)\|_{L^2}
    & \lesssim (1+t)^{-\left(\frac{n+2}4-\sigma\right)\,\frac1{1-\sigma}} \, \|(u_0,u_1)\|_{\mathcal{D}_2^1},
\end{align}
where~$C>0$ does not depend on the data.
\end{Thm}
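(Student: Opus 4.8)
The plan is to solve \eqref{eq:diss} by a contraction argument built on the linear decay estimates for this model. Let $u^{\lin}$ be the solution of the linear Cauchy problem (that is, \eqref{eq:diss} with $f\equiv0$) with data $(u_0,u_1)$, and let $E_1(t,D)$ be the Fourier multiplier such that $v(t)=E_1(t,D)g$ solves the homogeneous equation with $v(0)=0$ and $v_t(0)=g$. By Duhamel's principle a solution of \eqref{eq:diss} must satisfy
\[
u(t)=u^{\lin}(t)+\int_0^t E_1(t-s,D)\,f\bigl(u(s,\cdot)\bigr)\,ds=:N[u](t).
\]
I would work in the complete metric space given by the ball $\{\|u\|_{X(T)}\le M\}$, where
\[
\|u\|_{X(T)}:=\sup_{0\le t\le T}\Bigl\{(1+t)^{(\frac n4-\sigma)\frac1{1-\sigma}}\|u(t)\|_{L^2}+(1+t)\,\|u_t(t)\|_{L^2}+(1+t)^{(\frac{n+2}4-\sigma)\frac1{1-\sigma}}\|\nabla u(t)\|_{L^2}\Bigr\},
\]
the three weights being exactly the decay rates asserted in \eqref{eq:upar}--\eqref{eq:uxpar}. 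The aim is to prove that for $\|(u_0,u_1)\|_{\mathcal D_2^1}$ small $N$ maps this ball into itself and is a contraction, with constants independent of $T$; passing to the limit $T\to\infty$ then gives the unique global solution together with the claimed estimates.

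The linear term is controlled directly: the decay estimates for the parabolic-like regime $\sigma\in(0,1/2)$, the linear counterparts of \eqref{eq:upar}--\eqref{eq:uxpar}, yield $\|u^{\lin}\|_{X(T)}\lesssim\|(u_0,u_1)\|_{\mathcal D_2^1}$. The same estimates, now for the propagator $E_1(t,D)$ and its derivatives $\nabla E_1$, $\partial_t E_1$ acting on $L^1\cap L^2$ data, are what I would feed into the Duhamel term. At low frequencies these are obtained from the roots of $\lambda^2+\mu|\xi|^{2\sigma}\lambda+|\xi|^2=0$: the dominant root behaves like $\lambda_+\approx-\mu^{-1}|\xi|^{2(1-\sigma)}$, so $E_1(t,D)$ carries the amplitude $|\xi|^{-2\sigma}e^{\lambda_+t}$, and the scaling $\xi=t^{-1/(2(1-\sigma))}\eta$ produces precisely the exponents $(\frac n4-\sigma)\frac1{1-\sigma}$ and $(\frac{n+2}4-\sigma)\frac1{1-\sigma}$ for $E_1$ and $\nabla E_1$ (the extra factors $|\xi|$, respectively $\lambda_+$, accounting for the gradient and the time derivative); at high frequencies the damping yields exponential decay.

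The core is the nonlinear estimate $\|N[u]\|_{X(T)}\lesssim\|(u_0,u_1)\|_{\mathcal D_2^1}+\|u\|_{X(T)}^p$. For each of the three weighted components I would apply the corresponding $E_1$-estimate to $g=f(u(s,\cdot))$ and split $\int_0^t=\int_0^{t/2}+\int_{t/2}^t$; on $[0,t/2]$ one has $t-s\approx t$ so the decay factor comes out of the integral, while on $[t/2,t]$ one keeps the $(1+t-s)$-decay of the propagator and uses $s\approx t$. By \eqref{eq:disscontr} with $f(0)=0$ one bounds $\|f(u(s))\|_{L^1}\lesssim\|u(s)\|_{L^p}^p$ and $\|f(u(s))\|_{L^2}\lesssim\|u(s)\|_{L^{2p}}^p$, and the Gagliardo--Nirenberg inequality $\|u\|_{L^q}\lesssim\|u\|_{L^2}^{1-\theta_q}\|\nabla u\|_{L^2}^{\theta_q}$, $\theta_q=n(\frac12-\frac1q)$, converts these into powers of $(1+s)$ times $\|u\|_{X(T)}^p$; its admissibility for $q=2p$ is exactly $p\le n/(n-2)$, while $\theta_p\ge0$ needs $p\ge2$. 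A short computation then shows that the decisive integral $\int_0^{t/2}(1+s)^{-p\,r}\,ds$, with $r=(\frac n4-\sigma)\frac1{1-\sigma}+\theta_p\frac1{2(1-\sigma)}$ the decay rate of $\|u(s)\|_{L^p}$, is bounded uniformly in $t$ precisely when $p\,r>1$, i.e.\ when $p(n-2\sigma)>n+2-2\sigma$, which is condition \eqref{eq:ppar}, $p>1+\frac2{n-2\sigma}$.

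The main obstacle, and where the argument must be handled with care, is to verify that all three components close with the \emph{same} exponents as in $\|\cdot\|_{X(T)}$, with no loss; in particular the $u_t$- and $\nabla u$-components on $[t/2,t]$ require the $(L^1\cap L^2)\to L^2$ estimates for $\partial_t E_1$ and $\nabla E_1$, whose low-frequency decay rates are $\ge1$ (with equality for $\nabla E_1$ when $n=2$), and the strict inequality in \eqref{eq:ppar} is exactly what absorbs the resulting borderline logarithmic factor. Finally, the Lipschitz-type bound \eqref{eq:disscontr}, combined with Hölder's inequality and the same Gagliardo--Nirenberg estimates, gives $\|N[u]-N[v]\|_{X(T)}\lesssim\|u-v\|_{X(T)}\bigl(\|u\|_{X(T)}+\|v\|_{X(T)}\bigr)^{p-1}$. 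For $\|(u_0,u_1)\|_{\mathcal D_2^1}<\epsilon$ small these two bounds make $N$ a contraction on the chosen ball; its fixed point is the desired global-in-time solution, and the $T$-uniform bound on its $X(T)$-norm is exactly \eqref{eq:upar}--\eqref{eq:uxpar}.
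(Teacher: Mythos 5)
Your overall architecture (Duhamel representation, weighted norm matching \eqref{eq:upar}--\eqref{eq:uxpar}, Gagliardo--Nirenberg with $k=1$, splitting of the Duhamel integral at $t/2$) is the paper's, and your treatment of the $u$- and $u_t$-components is correct: there the $(L^1\cap L^2)$--$L^2$ propagator estimates can be used on all of $[0,t]$, the deliberately weakened weight $(1+t)$ for $u_t$ is exactly what makes that component close, and \eqref{eq:ppar} with its strict inequality absorbs the borderline logarithm. The gap is in the $\nabla u$-component on $[t/2,t]$. You propose to use there the $(L^1\cap L^2)$--$L^2$ estimate for $\nabla E_1$, whose decay exponent $\bigl(\frac{n+2}4-\sigma\bigr)\frac1{1-\sigma}$ is $\geq1$; but as soon as this exponent exceeds $1$ the integral $\int_{t/2}^t(1+t-s)^{-(\frac{n+2}4-\sigma)\frac1{1-\sigma}}\,ds$ is merely bounded, so you are left with the decay of $\|f(u(s,\cdot))\|_{L^1\cap L^2}$ at $s\approx t$, which is governed by its $L^1$ part, namely $(1+t)^{-(p(\frac{n}2-\sigma)-\frac{n}2)\frac1{1-\sigma}}$. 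Requiring this to be $\lesssim (1+t)^{-(\frac{n+2}4-\sigma)\frac1{1-\sigma}}$ amounts, for $n=3$, to $p\geq\frac{11-4\sigma}{6-4\sigma}$, which is strictly stronger than \eqref{eq:ppar} combined with $p\geq2$ once $\sigma>1/4$: for instance $n=3$, $\sigma=2/5$, $p=2$ is admissible in the theorem, but your bound yields only $(1+t)^{-7/6}$ where $(1+t)^{-17/12}$ is required. (For $n=2$ the exponent equals $1$ and your log-absorption remark is correct; for $n=4$ the forced value $p=2$ happens to suffice.) So the contraction does not close in your norm for $n=3$, $\sigma\in(1/4,1/2)$ and $p$ near the critical exponent.

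The paper avoids this by using on $[t/2,t]$ the $L^2$--$L^2$ gradient estimate \eqref{eq:vxpar2}, i.e.\ $\|\nabla E_1(t-s)g\|_{L^2}\lesssim(1+t-s)^{-(\frac12-\sigma)\frac1{1-\sigma}}\|g\|_{L^2}$, paired with $\|f(u(s,\cdot))\|_{L^2}\lesssim\|u(s,\cdot)\|_{L^{2p}}^p\lesssim(1+s)^{-(p(\frac{n}2-\sigma)-\frac{n}4)\frac1{1-\sigma}}\|u\|_{X(t)}^p$. Measuring $|u|^p$ in $L^2$ rather than $L^1$ gains the factor $(1+t)^{-\frac{n}{4(1-\sigma)}}$, and since $(\frac12-\sigma)\frac1{1-\sigma}<1$ the time integral now contributes $(1+t)^{1-(\frac12-\sigma)\frac1{1-\sigma}}$; the resulting total exponent equals $(\frac{n+2}4-\sigma)\frac1{1-\sigma}$ precisely under \eqref{eq:ppar}. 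You therefore need to add the $L^2$--$L^2$ estimate for $\nabla E_1$ to your list of linear tools and use it on $[t/2,t]$ for the gradient component; with that modification the rest of your argument goes through.
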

The exponent which is given by~\eqref{eq:ppar} is \emph{critical} (see later, Theorem~\ref{Thm:blow}).
\begin{Rem} \label{classicalenergymixing} We want to underline, that the results from Theorem \ref{Thm:parnl} base on the mixing of the
regularity for the data between $(H^1,L^2)$ and $L^1$. The data belong to the classical energy space, too.
\end{Rem}
\begin{Thm}\label{Thm:hypnl}
Let~$\sigma\in(1/2,1)$ in~\eqref{eq:diss}. Let $n\geq2$ and let $p\in [2,n/(n-4\sigma)]$ be such that
\begin{equation}\label{eq:phyp}
p > 1+ \frac{1+2\sigma}{n-1}.
\end{equation}
Then there exists~$\epsilon>0$ such that for any~$(u_0,u_1)\in\mathcal{D}_2^{2\sigma}$ with $\|(u_0,u_1)\|_{\mathcal{D}_2^{2\sigma}}<\epsilon$, there
exists a unique solution~$u \in \mathcal{C}([0,\infty),H^{2\sigma})\cap\mathcal{C}^1([0,\infty),L^2)$ to~\eqref{eq:diss}.
\\
Moreover, the solution, its first derivatives in time and space, and its derivative in space of fractional order~$2\sigma$, satisfy the decay estimates
\begin{align}
\label{eq:uhyp} \|u(t,\cdot)\|_{L^2}
    & \lesssim (1+t)^{-\frac{n-2}{4\sigma}} \|(u_0,u_1)\|_{L^1\cap L^2}, \\
\label{eq:uthyp}
\|u_t(t,\cdot)\|_{L^2}
    & \leq C (1+t)^{-\frac{n}{4\sigma}} \, \|(u_0,u_1)\|_{\mathcal{D}_2^{2(1-\sigma)}},\\
\label{eq:uxhyp}
\|\nabla u(t,\cdot)\|_{L^2}
    & \leq C (1+t)^{-\frac{n}{4\sigma}} \|(u_0,u_1)\|_{\mathcal{D}_2^1},\\
\label{eq:uxxhyp}
\|u(t,\cdot)\|_{\dot{H}^{2\sigma}}
    & \leq C (1+t)^{-\frac{n-2}{4\sigma}-1} \|(u_0,u_1)\|_{\mathcal{D}_2^{2\sigma}},
\end{align}
where~$C>0$ does not depend on the data.
\end{Thm}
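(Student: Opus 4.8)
The plan is to solve \eqref{eq:diss} by the same Duhamel-plus-contraction scheme as in the previous three regimes, the only genuinely new input being the hyperbolic-type linear estimates for $\sigma\in(1/2,1)$. Denote by $u^{\lin}(t)=E_0(t)u_0+E_1(t)u_1$ the solution of the homogeneous problem and by $E_1(t-s)$ the propagator acting on the velocity slot, so that every solution satisfies
\[
u(t)=u^{\lin}(t)+\int_0^t E_1(t-s)\,f(u(s))\,ds,
\]
and the Duhamel integral is written $u^{\nl}(t)$. I would work in the space $X(T)\subset \mathcal C([0,T],H^{2\sigma})\cap\mathcal C^1([0,T],L^2)$ normed by
\[
\|u\|_{X(T)}:=\sup_{0\le t\le T}\Big((1+t)^{\frac{n-2}{4\sigma}}\|u(t)\|_{L^2}+(1+t)^{\frac{n}{4\sigma}}\|u_t(t)\|_{L^2}+(1+t)^{\frac{n}{4\sigma}}\|\nabla u(t)\|_{L^2}+(1+t)^{\frac{n-2}{4\sigma}+1}\|u(t)\|_{\dot{H}^{2\sigma}}\Big),
\]
so that $u\in X(T)$ encodes exactly \eqref{eq:uhyp}--\eqref{eq:uxxhyp}. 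The goal is to show that $N u:=u^{\lin}+u^{\nl}$ maps a small ball of $X(T)$ into itself and is a contraction there, with all constants independent of $T$; the unique fixed point is then the desired global solution.

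For the linear part I would invoke the corresponding linear estimates, i.e.\ \eqref{eq:uhyp}--\eqref{eq:uxxhyp} read in the case $f\equiv0$. Since for $\sigma\in(1/2,1)$ one has $2\sigma\ge\max\{1,2(1-\sigma)\}$, the single data space $\mathcal D_2^{2\sigma}$ dominates all the norms $L^1\cap L^2$, $\mathcal D_2^1$ and $\mathcal D_2^{2(1-\sigma)}$ appearing on the right-hand sides, hence $\|u^{\lin}\|_{X(T)}\lesssim\|(u_0,u_1)\|_{\mathcal D_2^{2\sigma}}$. For the nonlinear part the first step is to control $f(u(s))$ in the norms the $E_1$-estimates require, namely $L^1\cap L^2$. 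From \eqref{eq:disscontr} with $v=0$ we have $|f(u)|\lesssim|u|^p$, so $\|f(u(s))\|_{L^1}\lesssim\|u(s)\|_{L^p}^p$ and $\|f(u(s))\|_{L^2}\lesssim\|u(s)\|_{L^{2p}}^p$. I would then use the Gagliardo--Nirenberg inequality
\[
\|u(s)\|_{L^q}\lesssim\|u(s)\|_{L^2}^{1-\theta(q)}\,\|u(s)\|_{\dot{H}^{2\sigma}}^{\theta(q)},\qquad \theta(q)=\frac{n}{2\sigma}\Big(\frac12-\frac1q\Big),
\]
which is admissible in the $H^{2\sigma}$ Sobolev range (and for every $q\ge2$ when $n\le4\sigma$); the constraints $p\ge2$ and $p\le n/(n-4\sigma)$ keep $\theta(p),\theta(2p)\in[0,1]$, the bound on $2p$ being the binding one. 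Feeding in the $X(T)$-decay of $\|u(s)\|_{L^2}$ and $\|u(s)\|_{\dot{H}^{2\sigma}}$ yields $\|u(s)\|_{L^q}\lesssim(1+s)^{-\frac{n-2}{4\sigma}-\theta(q)}\|u\|_{X(T)}$, and therefore a power decay of $\|f(u(s))\|_{L^1\cap L^2}$ of order $\|u\|_{X(T)}^p$.

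To estimate $u^{\nl}$ in each of the four norms I would split the Duhamel integral at $s=t/2$. On $[0,t/2]$ one uses the $L^1\cap L^2\to Y$ decay of $E_1(t-s)$ with $1+t-s\approx 1+t$, which factors out the target weight and leaves $\int_0^{t/2}\|f(u(s))\|_{L^1\cap L^2}\,ds$; this integral converges iff the $L^1$ exponent exceeds $1$, and since that exponent equals $p\big(\tfrac{n-1}{2\sigma}-\tfrac{n}{2\sigma p}\big)=\tfrac{p(n-1)-n}{2\sigma}$, the condition is precisely \eqref{eq:phyp}, $p>1+\frac{1+2\sigma}{n-1}$ (it is the slowest, $L^2$-of-$u$, target that makes this binding). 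On $[t/2,t]$ one instead uses the small-time $L^2\to Y$ estimate of $E_1(t-s)$ together with the decay of $\|f(u(s))\|_{L^2}$ near $s\approx t$, recovering the same rate. This gives $\|u^{\nl}\|_{X(T)}\lesssim\|u\|_{X(T)}^p$. The contraction estimate is identical in spirit: the full \eqref{eq:disscontr} and H\"older give $\|f(u)-f(v)\|_{L^m}\lesssim\|u-v\|_{L^{mp}}(\|u\|_{L^{mp}}+\|v\|_{L^{mp}})^{p-1}$ for $m=1,2$, and the same interpolation and time integration produce $\|Nu-Nv\|_{X(T)}\lesssim\|u-v\|_{X(T)}(\|u\|_{X(T)}+\|v\|_{X(T)})^{p-1}$; Banach's fixed point theorem on a ball of radius comparable to $\epsilon=\|(u_0,u_1)\|_{\mathcal D_2^{2\sigma}}$ closes the argument.

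The main obstacle is not the nonlinear bookkeeping above but the underlying linear estimates for $E_1$ in the genuinely hyperbolic range $\sigma\in(1/2,1)$: one must show that $E_1(t)$ gains $2\sigma$ spatial derivatives out of $L^1\cap L^2$ data while still decaying as in \eqref{eq:uxxhyp}, and that its small-time behaviour in the higher-order norms is integrable enough for the $[t/2,t]$ piece to converge. The oscillatory low-frequency part is responsible for the $(n-1)$ denominator in \eqref{eq:phyp} and for the need of the fractional norm $\dot{H}^{2\sigma}$ and of the mixed data regularity (note the appearance of $\mathcal D_2^{2(1-\sigma)}$ in \eqref{eq:uthyp}); checking that the nonlinearity, available only in $L^1\cap L^2$, is compatible with all four distinct linear mapping properties and that the resulting integrals all close under the single threshold \eqref{eq:phyp} is the delicate point, after which the Gagliardo--Nirenberg and contraction steps are routine.
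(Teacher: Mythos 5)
Your overall architecture (Duhamel, the weighted space $X(T)$ encoding \eqref{eq:uhyp}--\eqref{eq:uxxhyp}, splitting at $s=t/2$, fractional Gagliardo--Nirenberg with $\kappa=2\sigma$) matches the paper's proof, but there is a genuine gap in your treatment of the $[t/2,t]$ piece for the $L^2$ norm of the Duhamel term. You propose to use "the small-time $L^2\to Y$ estimate of $E_1(t-s)$" there for all four target norms. For the three derivative norms this works, because the corresponding $L^2$--$L^2$ bounds \eqref{eq:vthyp2}--\eqref{eq:vxxhyp2} are uniformly bounded in time. But for the solution itself the only available $L^2$--$L^2$ estimate is \eqref{eq:vhyp2}, $\|v(t,\cdot)\|_{L^2}\lesssim(1+t)\|(v_0,v_1)\|_{L^2}$; the sharper bound $(1+t)^{1/2}$ is precisely what the authors say they cannot prove (Remark~\ref{Rem:NR22}). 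With $\|f(u(s,\cdot))\|_{L^2}\lesssim\|u\|_{X}^p(1+s)^{\frac{-2p(n-1)+n}{4\sigma}}$ you would get
\begin{equation*}
\int_{t/2}^t(1+t-s)\,\|f(u(s,\cdot))\|_{L^2}\,ds\;\lesssim\;\|u\|_{X}^p\,(1+t)^{\frac{-2p(n-1)+n}{4\sigma}+2},
\end{equation*}
and requiring this to be $\lesssim(1+t)^{-\frac{n-2}{4\sigma}}$ forces $p\geq 1+\frac{4\sigma}{n-1}$, which is strictly stronger than \eqref{eq:phyp} for every $\sigma>1/2$ (e.g.\ $n=4$, $\sigma=9/10$, $p=2$ is admitted by the theorem but fails your inequality). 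The paper avoids this by \emph{not} switching to the $L^2$--$L^2$ estimate for $Nu$ on $[t/2,t]$: it keeps the $(L^1\cap L^2)$--$L^2$ decay $(1+t-s)^{-\frac{n-2}{4\sigma}}$ on the whole interval and then observes that either $\frac{n-2}{4\sigma}\leq1$ (so the kernel is non-integrable and the integral over $[t/2,t]$ contributes only an extra factor $(1+t)^{1-\frac{n-2}{4\sigma}}$, which \eqref{eq:phyp} absorbs), or $n>2+4\sigma$ and $p\geq2$ closes the estimate directly. You need this, or an equivalent device, to make the zeroth-order estimate close under \eqref{eq:phyp} alone.

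A secondary point: your blanket use of the fractional Gagliardo--Nirenberg inequality with $\kappa=2\sigma$, justified "for every $q\ge2$ when $n\le4\sigma$", outruns the tool actually established in the paper. The appendix lemma is proved via Riesz potentials and Hardy--Littlewood--Sobolev, hence requires $\kappa\in(0,n/2)$, i.e.\ $n>4\sigma$. For $n=2$ (where $2\sigma>1=n/2$) and for $n=3$ with $\sigma\in[3/4,1)$ the paper therefore switches to interpolating against $\|\nabla u\|_{L^2}$ (classical GN with $k=1$) or against $\|u\|_{\dot H^{\kappa}}$ with a suitable $\kappa\in(1,3/2)$, using the corresponding decay rates which are also available from Theorem~\ref{Thm:hyplin}. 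A more general convexity inequality would indeed cover $\kappa\geq n/2$ for finite $q$, but you should either cite such a statement or handle these low-dimensional cases separately as the paper does.
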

\begin{Rem} \label{higherorderclassicalmixing} We want to underline, that the results from Theorem \ref{Thm:hypnl} base on the one hand
on the use of higher order of regularity, namely, fractional order $2\sigma$ in space, and on the other hand on the mixing of different regularity for the
data, where the data belong to the classical energy space, too.
\end{Rem}
%


\subsection{A comparison with the classical damped wave equation} \label{Sec1.2}

Let us compare the results from Section \ref{Sec1.1}  with some known results for the \emph{classical damped wave equation}.  Some new effects appear for
semi-linear structural damped waves which we will explain in Section \ref{Sec1.3}. If we set~$\sigma=0$ in~\eqref{eq:diss}, then we get
\begin{equation}
\label{eq:classic}
\begin{cases}
u_{tt}-\Delta u+\mu u_t=f(u), & t\geq0, \ x\in\R^n,\\
u(0,x)=u_0(x), \\
u_t(0,x)=u_1(x),
\end{cases}
\end{equation}
for which many results are known concerning global existence of small data solutions and sharp decay estimates.
\\
In particular, let~$n\leq4$ and let
\[ p\in\begin{cases}
(1+2/n,\infty) & \text{if~$n=1,2$,}\\
[2,3] & \text{if~$n=3$,}\\
\{2\} & \text{if~$n=4$.}
\end{cases}\]
Then there exists~$\epsilon>0$ such that for any initial data~$(u_0,u_1)\in (L^1\cap H^1) \times (L^1\cap L^2)$ which
satisfy~$\|u_0\|_{L^1\cap H^1}+\|u_1\|_{L^1\cap L^2}\leq\epsilon$, there exists a unique
solution~$u\in\mathcal{C}([0,\infty),H^1)\cap\mathcal{C}^1([0,\infty),L^2)$ to~\eqref{eq:classic} (see~\cite{IO}). Moreover, such a solution
and its first derivatives with respect to $t$ and $x$ satisfy the same decay estimates of the linear problem~\cite{Matsu}
(i.e. \eqref{eq:classic} with~$f\equiv0$), that is,
\begin{align}
\label{eq:uclassic}
\|u(t,\cdot)\|_{L^2} & \lesssim (1+t)^{-\frac{n}4} \, \|(u_0,u_1)\|_{L^1\cap L^2}, \\
\label{eq:utclassic}
\|u_t(t,\cdot)\|_{L^2} & \lesssim (1+t)^{-\frac{n}4-1} \, \bigl(\|u_0\|_{L^1\cap H^1}+\|u_1\|_{L^1\cap L^2}\bigr),\\
\label{eq:uxclassic} \|\nabla u(t,\cdot)\|_{L^2} & \lesssim (1+t)^{-\frac{n}4-\frac12} \, \bigl(\|u_0\|_{L^1\cap H^1}+\|u_1\|_{L^1\cap L^2}\bigr).
\end{align}
Moreover, the exponent~$1+2/n$ is \emph{critical}. In particular, if one set~$f(u)=|u|^p$ in~\eqref{eq:classic} and if the data are
in~$\mathcal{C}_0^\infty$ and satisfy~$\int_{\R^n} u_j(x)\,dx>0$, for~$j=0,1$, then there exists no global solution to~\eqref{eq:classic} for
any~$p\leq1+2/n$ and for any~$n\geq1$.


\subsection{Overview of the four models}\label{sec:overview} \label{Sec1.3}

We notice that the properties of solutions to our model~\eqref{eq:diss} changes completely from~$\sigma\in(0,1/2]$ to~$\sigma\in[1/2,1]$. Therefore we
propose to distinguish between \emph{parabolic type} ($\sigma\in(0,1/2)$) and \emph{hyperbolic type} ($\sigma\in(1/2,1]$) models, having in mind that the
classical damped wave equation is a \emph{parabolic} model, whereas the visco-elastic damped wave equation is a \emph{hyperbolic} model.
We consider the case $\sigma=1/2$ as a critical case.
\begin{itemize}
\item The structure of the case~$\sigma=1/2$ is very special and easy to manage. This simplicity allows us easily to state a result which
also includes an energy based on~$L^m$ norm, with~$m\in(1,2]$. We remark that the first derivatives of the solution with respect to time and to space have
the same decay rate. This is a new effect with respect to the case~$\sigma=0$, for which the decay rate in~\eqref{eq:utclassic} is better than the one
in~\eqref{eq:uxclassic}.
\item Dealing with the case~$\sigma=1$, completely new effects arise with respect to the cases~$\sigma=0$ or~$\sigma=1/2$.
In particular, we see that the first derivatives of the solution with respect to time and space  have the same decay rate, as in the case~$\sigma=1/2$. On
the other hand, the estimate for the first derivative in time~\eqref{eq:utShi} requires less regularity for the data comparing with respect to the estimate
for the first derivative in space~\eqref{eq:uxShi}. This property is new in comparison with respect to both cases~$\sigma=0,1/2$. Moreover, we can also
obtain a decay estimate for the space derivatives up to the second order if we assume $H^2$ regularity for~$u_0$ with no need of additional regularity
for~$u_1$. This property is very useful to deal with semilinear problems (see Remark~\ref{Rem:u1L2}).
\item The case~$\sigma\in(0,1/2)$ interpolates the cases~$\sigma=0$ and~$\sigma=1/2$. In particular, the \emph{critical exponent}
$1+2/(n-2\sigma)$ and the decay rates for the solution and its first derivatives are continuous with respect to~$\sigma$ for~$\sigma\in[0,1/2]$.
We remark that the decay rate in~\eqref{eq:utpar} is better than the one in~\eqref{eq:uxpar}, but the regularity of the data is the same.
\item The case~$\sigma\in(1/2,1)$ interpolates the cases~$\sigma=1/2$ and~$\sigma=1$. In particular,
the exponent $1+(1+2\sigma)/(n-1)$, the decay rates for the energy of the solution, and the regularity required on the data are continuous with respect
to~$\sigma$ for~$\sigma\in[1/2,1]$. We remark that in this case an estimate on the fractional derivative of order~$2\sigma$ of the solution appears.
\end{itemize}

\begin{Rem}\label{Rem:ranges}
We have different ranges for~$m$ and~$p$ for which we can apply Theorem~\ref{Thm:halfnl}.
\begin{itemize}
\item Let~$n=2$. Then we can apply Theorem~\ref{Thm:halfnl} for any~$m\in (4/3,2]$ and~$p\in(3,2/(2-m)]$.
\item Let~$n=3$. Then we can apply Theorem~\ref{Thm:halfnl} for any~$m\in (3/2,2]$ and~$p\in(2,3/(3-m)]$.
\item Let~$n=4$. Then we can apply Theorem~\ref{Thm:halfnl} for any~$m\in (5/3,2]$ and~$p\in[m,4/(4-m)]$, or for any~$m\in(8/5,5/3]$ and~$p\in(5/3,4/(4-m)]$.
\item We can not apply Theorem~\ref{Thm:halfnl} if~$n=5$. The set of admissible $p$ is empty.
\end{itemize}
In Theorem~\ref{Thm:Shinl} we have the following ranges for~$p$:
\[ p \in
\begin{cases}
(4,\infty) & \text{if~$n=2$,}\\
(5/2,\infty) & \text{if~$n=3$,}\\
(2,\infty) & \text{if~$n=4$,}\\
[2,5] & \text{if~$n=5$,}\\
[2,3] & \text{if~$n=6$,}\\
[2,7/3] & \text{if~$n=7$,}\\
\{2\} & \text{if~$n=8$.}
\end{cases}
\]
The set is empty for~$n\geq9$. In Theorem~\ref{Thm:parnl} we have the following ranges for~$p$:
\[ p \in
\begin{cases}
(1+1/(1-\sigma),\infty) & \text{if~$n=2$,}\\
[2,3] & \text{if~$n=3$,}\\
\{2\} & \text{if~$n=4$.}
\end{cases}
\]
The set is empty for~$n\geq5$. In Theorem~\ref{Thm:hypnl} we have the following ranges for~$p$:
\[ p \in
\begin{cases}
(2+2\sigma,\infty) & \text{if~$n=2$,}\\
((3+2\sigma)/2,\infty) & \text{if~$n=3$ and~$\sigma\in[3/4,1)$,}\\
((3+2\sigma)/2,3/(3-4\sigma)] & \text{if~$n=3$ and~$\sigma\in(1/2,3/4)$,}\\
[2,1/(1-\sigma)] & \text{if~$n=4$,}\\
[2,5/(5-4\sigma)] & \text{if~$n=5$ and $\sigma\in(5/8,1)$,}\\
[2,3/(3-2\sigma)] & \text{if~$n=6$ and $\sigma\in(3/4,1)$,}\\
[2,7/(7-4\sigma)] & \text{if~$n=7$ and $\sigma\in(7/8,1)$.}
\end{cases}
\]
The set is empty for~$n\geq8$.
\end{Rem}


\section{Linear decay estimates}\label{sec:linear}
\noindent In this section our aim is to derive decay estimates for the solution and some of its derivatives to the linear Cauchy problem
\begin{equation}
\label{eq:lin}
\begin{cases}
v_{tt}-\Delta v+\mu(-\Delta)^\sigma v_t=0, & t\geq0, \ x\in\R^n,\\
v(0,x)=v_0(x), \\
v_t(0,x)=v_1(x),
\end{cases}
\end{equation}
which corresponds to~\eqref{eq:diss} when~$f\equiv0$.


\subsection{The case~$\sigma=1/2$}\label{sec:half}

In this case we have the following statement.
\begin{Thm}\label{Thm:halflin}
Let~$\sigma=1/2$ in~\eqref{eq:lin}. Let $n\geq1$ and let $m\in(1,2]$. Let~$(v_0,v_1)\in\mathcal{D}_m^1$. Then the solution to~\eqref{eq:lin} and its energy based on the~$L^m$ norm satisfy the $(L^1\cap L^m)-L^m$ estimates
\begin{align}
\label{eq:v}
\|v(t,\cdot)\|_{L^m}
    & \lesssim (1+t)^{1-n(1-\frac1m)} \, \|(v_0,v_1)\|_{L^1 \cap L^m}\,, \\
\label{eq:gradv}
\|\bigl(\nabla v(t,\cdot),v_t(t,\cdot)\bigr)\|_{L^m}
    & \lesssim (1+t)^{-n(1-\frac1m)} \, \|(v_0,v_1)\|_{\mathcal{D}_m^1} \,,
\intertext{and the $L^m-L^m$ estimates}
\label{eq:vm}
\|v(t,\cdot)\|_{L^m}
    & \lesssim (1+t) \, \|(v_0,v_1)\|_{L^m}\,, \\
\label{eq:gradvm}
\|\bigl(\nabla v(t,\cdot),v_t(t,\cdot)\bigr)\|_{L^m}
    & \lesssim \|(v_0,v_1)\|_{H^{1,m}\times L^m} \,,
\end{align}
\end{Thm}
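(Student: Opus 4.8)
The plan is to pass to the partial Fourier transform in $x$. Writing $\widehat{v}(t,\xi)$ for the transform and using that $(-\Delta)^{1/2}$ has symbol $\abs{\xi}$, problem~\eqref{eq:lin} with $\sigma=1/2$ becomes the $\xi$-parametrized family of ordinary differential equations
\[ \widehat{v}_{tt}+\mu\,\abs{\xi}\,\widehat{v}_t+\abs{\xi}^2\,\widehat{v}=0,\qquad \widehat{v}(0,\xi)=\widehat{v}_0(\xi),\quad \widehat{v}_t(0,\xi)=\widehat{v}_1(\xi). \]
Its characteristic roots are $\lambda_\pm(\xi)=\tfrac{-\mu\pm\sqrt{\mu^2-4}}{2}\,\abs{\xi}$. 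The decisive feature of the case $\sigma=1/2$ is that \emph{both roots are positively homogeneous of degree one in $\xi$}, with $\Re\lambda_\pm(\xi)\le-\delta\abs{\xi}$ for some $\delta=\delta(\mu)>0$, in each of the three regimes $\mu>2$ (real distinct roots), $\mu=2$ (a double root $-\abs{\xi}$) and $\mu<2$ (complex conjugate roots). In every regime I write $\widehat{v}(t,\xi)=E_0(t,\xi)\widehat{v}_0(\xi)+E_1(t,\xi)\widehat{v}_1(\xi)$, where for $\mu\neq2$
\[ E_0=\frac{\lambda_+ e^{\lambda_- t}-\lambda_- e^{\lambda_+ t}}{\lambda_+-\lambda_-},\qquad E_1=\frac{e^{\lambda_+ t}-e^{\lambda_- t}}{\lambda_+-\lambda_-}, \]
while $E_0=(1+\abs{\xi}t)e^{-\abs{\xi}t}$, $E_1=t\,e^{-\abs{\xi}t}$ when $\mu=2$.

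The homogeneity of the roots produces an \emph{exact self-similarity} of these multipliers, which is the engine of the whole proof. A direct computation gives
\[ E_0(t,\xi)=E_0(1,t\xi),\qquad E_1(t,\xi)=t\,E_1(1,t\xi), \]
and, differentiating, $\xi\,E_0(t,\xi)=t^{-1}G(t\xi)$, $\xi\,E_1(t,\xi)=L(t\xi)$, $\partial_t E_1(t,\xi)=H(t\xi)$ and $\partial_t E_0(t,\xi)=\abs{\xi}\,M(1,t\xi)$, where $G,L,H,M$ are the corresponding reference symbols obtained by freezing $t=1$. Each reference symbol is bounded on $\R^n$, smooth away from the origin, and decays exponentially as $\abs{\xi}\to\infty$ thanks to the factor $e^{-\delta\abs{\xi}}$; moreover the apparent singularity $1/\abs{\xi}$ in $E_1$ cancels at the origin (one checks $E_1(1,0)=1$ by Taylor expansion), so all reference symbols are bounded near $\xi=0$ as well.

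From here the two families of estimates follow by scaling. For the $(L^1\cap L^m)$--$L^m$ estimates I use that each reference symbol lies in $L^1\cap L^\infty$, so that its inverse Fourier transform $K=\mathcal{F}^{-1}[\cdot]$ is a fixed function in $L^m$ for every $m\in(1,2]$ (bounded near $0$ and decaying at spatial infinity like a derivative of the Poisson kernel). Self-similarity then gives $\mathcal{F}^{-1}[E_1(t,\cdot)](x)=t\,t^{-n}K_1(x/t)$, whence $\|\mathcal{F}^{-1}[E_1(t,\cdot)]\|_{L^m}=t^{1-n(1-\frac1m)}\|K_1\|_{L^m}$, with the analogous identities (without, respectively with an extra power $t^{-1}$) for $E_0$ and for the derivative-weighted symbols. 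Combining these with Young's inequality $L^1*L^m\hookrightarrow L^m$ yields the claimed rates for $t\ge1$. For bounded times, and for the pure $L^m$--$L^m$ estimates~\eqref{eq:vm}--\eqref{eq:gradvm}, I instead invoke the Mikhlin--H\"ormander theorem: since the bounds $\abs{\xi}^{\abs{\alpha}}\abs{\partial_\xi^\alpha(\cdot)}\lesssim1$ are invariant under the dilation $\xi\mapsto t\xi$, every reference symbol is a bounded $L^m$ multiplier \emph{uniformly in $t$} for $1<m<\infty$. This provides the uniform boundedness of $E_0$, $\xi\,E_1$ and $\partial_t E_1$, the factor $(1+t)$ in~\eqref{eq:vm} from $E_1=t\,E_1(1,t\cdot)$, and, writing $\partial_t E_0\,\widehat{v}_0=M\,\widehat{(-\Delta)^{1/2}v_0}$ and $\xi\,E_0\,\widehat{v}_0=E_0\,\widehat{\nabla v_0}$, the control of the energy by $\|v_0\|_{H^{1,m}}+\|v_1\|_{L^m}$ via the $L^m$-boundedness of the Riesz transforms. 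Splitting $t\le1$ and $t\ge1$ and estimating by the $L^m$- and $L^1$-norms of the data respectively produces the stated $(1+t)$-rates.

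The genuinely technical step is the verification, \emph{uniformly across the three regimes} $\mu\gtrless2$, that the reference symbols satisfy the Mikhlin--H\"ormander conditions and that their inverse transforms belong to $L^m$. The difficulty is concentrated at the origin, where $\abs{\xi}$ is only Lipschitz: one must check that the cancellations making $E_1(1,\cdot)$ regular at $\xi=0$ persist for all the $\xi$- and $\partial_t$-weighted symbols, and, in the regime $\mu<2$, that the oscillation $e^{\pm\frac{i}{2}\sqrt{4-\mu^2}\,\abs{\xi}t}$ does not spoil these bounds --- which it does not, owing to the damping $e^{-\frac{\mu}{2}\abs{\xi}t}$ and the exact degree-one homogeneity. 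Once these symbol estimates are secured, the remainder is only scaling, Young's inequality and Mikhlin's theorem.
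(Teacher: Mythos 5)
Your argument is correct and rests on the same engine as the paper's proof: after the partial Fourier transform the characteristic roots are exactly homogeneous of degree one in $\xi$, so the multipliers are self-similar and the decay rates are pure scaling. The implementation differs in two respects worth noting. First, the paper only writes out the case $\mu=2$ in full, where the kernel is literally the Poisson kernel $c_n t/(t^2+|x|^2)^{(n+1)/2}$ and its $t$-derivative, and obtains \emph{both} families of estimates from Young's inequality (convolving with the $L^1$ norm of the kernel for \eqref{eq:vm}--\eqref{eq:gradvm} and with its $L^m$ norm for \eqref{eq:v}--\eqref{eq:gradv}); for $\mu\neq2$ it defers entirely to the kernel estimates of \cite{NR}. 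Your version makes the self-similarity $E_0(t,\xi)=E_0(1,t\xi)$, $E_1(t,\xi)=tE_1(1,t\xi)$ explicit and thereby treats the three regimes $\mu\gtrless2$ uniformly, and it replaces the Young-with-$L^1$-kernel step by the Mikhlin--H\"ormander theorem, exploiting the dilation invariance of the Mikhlin condition to get $t$-uniform $L^m$ bounds without computing any kernel; this is a clean alternative (and is where the restriction $1<m$ enters naturally, whereas the Young route would also cover $m=1$). The price is the same in both treatments: one must still verify that the frozen symbols $E_0(1,\cdot)$, $|\xi|E_1(1,\cdot)$, etc., have inverse Fourier transforms in $L^m$ near the Lipschitz singularity at $\xi=0$ and in all three root regimes --- you correctly isolate this as the genuinely technical step, and it is precisely the content the paper imports from \cite{NR} rather than proving. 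So your proposal is a valid, somewhat more systematic reorganization of the same proof, not a different method.
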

\begin{Rem} \label{Rem61}
In the special case~$m=2$ one can directly prove Theorem~\ref{Thm:halflin} by using the approach presented in Sections
\ref{sec:Shi}-\ref{sec:par}-\ref{sec:hyp}. On the other hand, if~$m\in(1,2)$ then we need different tools.
\end{Rem}
\begin{proof}
Since $v_0,v_1\in L^1$ we can perform Fourier transform of~\eqref{eq:lin} for~$\sigma=1/2$ obtaining the following Cauchy problem
for~$w(t,\xi)=\widehat{v}(t,\xi)$:
\begin{equation}\label{eq:w}
\begin{cases}
w_{tt}+ \mu\,|\xi| w_t + |\xi|^2 w = 0,\\
w(0,\xi) = \widehat{v_0}(\xi),\\
w_t(0,\xi) = \widehat{v_1}(\xi).
\end{cases}
\end{equation}
{}First let~$\mu=2$. In such a case the characteristic root of the symbol of the operator from~\eqref{eq:w} is~$|\xi|$ with multiplicity~$2$. This gives
the representation
\[ w(t,\xi) = \Big(C_1(\xi) + C_2(\xi) t \Big) \,e^{-t|\xi|}. \]
{}From the initial data we immediately get~$C_1=\widehat{v_0}(\xi)$ and since
\[ w_t(t,\xi) = \Big( -\widehat{v_0}(\xi)|\xi| + C_2(\xi) (1-t|\xi|) \Big) \,e^{-t|\xi|}, \]
we get~$C_2=\widehat{v_0}(\xi)|\xi|+\widehat{v_1}(\xi)$, that is,
\begin{align}
\label{eq:w1} w(t,\xi)
    & = \Big( (1+t|\xi|)\,\widehat{v_0}(\xi) + t\,\widehat{v_1}(\xi) \Big) e^{-t|\xi|},\\
\label{eq:w'} w_t(t,\xi)
    & = \Big( -t|\xi|^2 \,\widehat{v_0}(\xi) + (1-t|\xi|)\,\widehat{v_1}(\xi) \Big) e^{-t|\xi|}.
\end{align}
To derive $L^m-L^m$ and $L^1-L^m$ estimates we use tools from the paper~\cite{NR}. Due to the relation
\[ \int_{\R^n} e^{-2\pi\,|\xi|t} \, e^{-2\pi\,i x\cdot \xi}\,d\xi = c_n\,\frac{t}{(t^2+|\xi|^2)^{\frac{n+1}2}}, \]
and by virtue of Young's inequality we conclude
\begin{align}
\nonumber
\|v(t,\cdot)\|_{L^m}
    & \lesssim \|v_0\|_{L^m} + t \|v_1\|_{L^m} \\
\label{eq:vsmall}
    & \lesssim (1+t) \|(v_0,v_1)\|_{L^m},\\
\nonumber
\|v(t,\cdot)\|_{L^m}
    & \lesssim t^{-n(1-1/m)} \left( \|v_0\|_{L^1} + t \|v_1\|_{L^1} \right) \\
\label{eq:vlarge}
    & \lesssim (1+t)\, t^{-n(1-1/m)} \|(v_0,v_1)\|_{L^1},
\end{align}
for the solution~$v$ to~\eqref{eq:lin} and
\begin{align}
\label{eq:gradvsmall}
\|\bigl(\nabla v(t,\cdot),v_t(t,\cdot)\bigr)\|_{L^m}
    & \lesssim \|(\nabla v_0,v_1)\|_{L^m},\\
\label{eq:gradvlarge}
\|\bigl(\nabla v(t,\cdot),v_t(t,\cdot)\bigr)\|_{L^m}
    & \lesssim t^{-n(1-1/m)} \|(v_0,v_1)\|_{L^1},
\end{align}
for its gradient and its time derivative. By using~\eqref{eq:vsmall} (resp.~\eqref{eq:gradvsmall}) for~$t\leq1$ and~\eqref{eq:vlarge} (resp.~\eqref{eq:gradvlarge})
for~$t\geq1$ we derive~\eqref{eq:v} (resp.~\eqref{eq:gradv}). On the other hand, \eqref{eq:vsmall} and \eqref{eq:gradvsmall} directly give~\eqref{eq:vm}
and~\eqref{eq:gradvm}.
\\
If~$\mu\neq2$, then we have two different characteristic roots:
\[ \lambda_\pm = \begin{cases}
\bigl(-\mu \pm \sqrt{\mu^2-4}\bigr)\, |\xi| /2 & \text{if~$\mu>2$, real-valued roots,}\\
\bigl(-\mu \pm i \sqrt{4-\mu^2}\bigr)\, |\xi| /2 & \text{if~$\mu<2$, complex-valued roots.}
\end{cases} \]
Nevertheless, following~\cite{NR} one can prove again the estimates \eqref{eq:vsmall}-\eqref{eq:vlarge}-\eqref{eq:gradvsmall}-\eqref{eq:gradvlarge} and
conclude \eqref{eq:v}-\eqref{eq:gradv}-\eqref{eq:vm}-\eqref{eq:gradvm}.
\end{proof}

\subsection{The case~$\sigma=1$}\label{sec:Shi}
This case was studied in detail in \cite{Shibata}. Here and in Sections~\ref{sec:par} and~\ref{sec:hyp} we deal with $L^1\cap L^2$
estimates, whereas in Theorem~\ref{Thm:halflin} we stated $L^1\cap L^m$ estimates for any~$m\in(1,2]$. In facts, the choice~$m=2$ allows us to use
Parseval's formula in the proofs of Theorems \ref{Thm:Shilin}-\ref{Thm:parlin}-\ref{Thm:hyplin}.
\begin{Thm}\label{Thm:Shilin}
Let~$\sigma=1$ in~\eqref{eq:lin}. Let $n\geq2$ and let~$(v_0,v_1)\in\mathcal{D}_2^2$. Then the solution to~\eqref{eq:lin}, its first derivative in time, and its derivatives in space up to the second order, satisfy the $(L^1\cap L^2)-L^2$ estimates
\begin{align}
\label{eq:vShi}
\|v(t,\cdot)\|_{L^2}
    & \lesssim \begin{cases}
        (1+t)^{-\frac{n-2}4} \, \|(v_0,v_1)\|_{L^1\cap L^2} & \text{if~$n\geq3$,}\\
        \log (e+t) \, \|(v_0,v_1)\|_{L^1\cap L^2} & \text{if~$n=2$,}
        \end{cases} \\
\label{eq:vtShi} \|v_t(t,\cdot)\|_{L^2}
    & \lesssim (1+t)^{-\frac{n}4} \, \|(v_0,v_1)\|_{L^1\cap L^2},\\
\label{eq:vxShi}
\|\nabla v(t,\cdot)\|_{L^2}
    & \lesssim (1+t)^{-\frac{n}4} \|(v_0,v_1)\|_{\mathcal{D}_2^1},\\
\label{eq:vxxShi}
\|\nabla^2 v(t,\cdot)\|_{L^2}
    & \lesssim (1+t)^{-\frac{n+2}4} \|(v_0,v_1)\|_{\mathcal{D}_2^2},
\intertext{and the $L^2-L^2$ estimates}
\label{eq:vtShi2} \|v_t(t,\cdot)\|_{L^2}
    & \lesssim \|(v_0,v_1)\|_{L^2},\\
\label{eq:vxShi2}
\|\nabla v(t,\cdot)\|_{L^2}
    & \lesssim \|(v_0,v_1)\|_{H^1\times L^2},\\
\label{eq:vxxShi2}
\|\nabla^2 v(t,\cdot)\|_{L^2}
    & \lesssim (1+t)^{-\frac12} \|(v_0,v_1)\|_{H^2\times L^2}.
\end{align}
\end{Thm}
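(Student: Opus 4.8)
The plan is to work on the Fourier side, where \eqref{eq:lin} with $\sigma=1$ reduces to the ordinary differential equation $w_{tt}+\mu\abs{\xi}^2 w_t+\abs{\xi}^2 w=0$ for $w(t,\xi)=\widehat{v}(t,\xi)$, and to analyze the multipliers that represent $w$, $w_t$, $\abs{\xi}w$ and $\abs{\xi}^2 w$ in terms of $\widehat{v_0}$ and $\widehat{v_1}$. The characteristic roots are $\lambda_\pm=(-\mu\abs{\xi}^2\pm\abs{\xi}\sqrt{\mu^2\abs{\xi}^2-4})/2$, with $\lambda_+\lambda_-=\abs{\xi}^2$ and $\lambda_++\lambda_-=-\mu\abs{\xi}^2$. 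Writing $w=A(\xi)e^{\lambda_+ t}+B(\xi)e^{\lambda_- t}$ with $A,B$ fixed by the data, I would record the coefficients of $\widehat{v_0}$ and $\widehat{v_1}$ in each of the four multipliers. Since we work with $m=2$, Parseval's formula transfers every $L^2$ norm to the Fourier side, so the whole statement reduces to pointwise bounds on these multipliers followed by integration in $\xi$.

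The decisive step is the low/high frequency dichotomy. For $\abs{\xi}<2/\mu$ the roots are complex with $\mathrm{Re}\,\lambda_\pm=-\mu\abs{\xi}^2/2$ and imaginary part $\omega=\abs{\xi}\sqrt{4-\mu^2\abs{\xi}^2}/2\approx\abs{\xi}$, so every multiplier carries the parabolic factor $e^{-c\abs{\xi}^2 t}$; in particular the coefficient of $\widehat{v_1}$ in $w$ equals $e^{-\mu\abs{\xi}^2 t/2}\,\sin(\omega t)/\omega$, which is of size $\abs{\xi}^{-1}e^{-c\abs{\xi}^2 t}$ (saturating at the bound $t$ near the origin). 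For $\abs{\xi}>2/\mu$ the roots are real with $\lambda_-\approx-\mu\abs{\xi}^2$, whose contribution is negligible after integration, and $\lambda_+\to-1/\mu$ as $\abs{\xi}\to\infty$: this branch yields the time factor $e^{-t/\mu}$ with \emph{no} gain in $\abs{\xi}$, which is the characteristic visco-elastic effect. Since $\lambda_+-\lambda_-\approx\mu\abs{\xi}^2$ at high frequency, the coefficient of $\widehat{v_1}$ there carries an extra factor $\abs{\xi}^{-2}$ while that of $\widehat{v_0}$ does not; I would emphasize this asymmetry, because it is exactly why the $\widehat{v_0}$-part of $\abs{\xi}^k w$ costs $k$ spatial derivatives of $v_0$ at high frequency while the $\widehat{v_1}$-part costs none. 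On the compact transition annulus $\abs{\xi}\approx 2/\mu$, where $\lambda_+-\lambda_-$ vanishes, the multipliers extend smoothly (the double-root representation $(C_1+C_2 t)e^{\lambda t}$ has $\mathrm{Re}\,\lambda<0$), so this band only contributes exponential decay controlled by the $L^2$ norm of the data.

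With these bounds the estimates assemble as follows. For the $(L^1\cap L^2)-L^2$ estimates I would split $\{\abs{\xi}\le\delta\}$ from $\{\abs{\xi}\ge\delta\}$; on the former I use $\|\widehat{v_j}\|_{L^\infty}\le\|v_j\|_{L^1}$ together with $\int_{\abs{\xi}\le\delta}\abs{\xi}^{2a}e^{-2c\abs{\xi}^2 t}\,d\xi\lesssim(1+t)^{-(n+2a)/2}$, valid for $n+2a>0$ and degenerating to a logarithmic factor when $n+2a=0$. Applied to the $\widehat{v_1}$-multiplier of $w$, whose square behaves like $\abs{\xi}^{-2}$, this yields $(1+t)^{-(n-2)/4}$ for $n\ge3$ and the logarithmic loss for $n=2$ in \eqref{eq:vShi}, while the same computation produces the rates in \eqref{eq:vtShi}, \eqref{eq:vxShi} and \eqref{eq:vxxShi}. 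On the high- and intermediate-frequency bands I would invoke Parseval with the factor $e^{-t/\mu}$: the $\widehat{v_1}$-contributions are absorbed by $\|v_1\|_{L^2}$ thanks to the extra $\abs{\xi}^{-2}$, whereas the $\widehat{v_0}$-contributions to $\abs{\xi}^k w$ require $\|\,\abs{\xi}^k\widehat{v_0}\|_{L^2}=\|v_0\|_{\dot{H}^k}$, i.e.\ $H^1$ regularity for \eqref{eq:vxShi} and $H^2$ regularity for \eqref{eq:vxxShi}; this is exactly the regularity built into $\mathcal{D}_2^1$ and $\mathcal{D}_2^2$. The $L^2-L^2$ estimates \eqref{eq:vtShi2}, \eqref{eq:vxShi2} and \eqref{eq:vxxShi2} use the same multiplier bounds without the $L^1$ gain: \eqref{eq:vtShi2} and \eqref{eq:vxShi2} follow from uniform boundedness of the relevant multipliers, and the $(1+t)^{-1/2}$ factor in \eqref{eq:vxxShi2} comes from $\sup_\xi\abs{\xi}\,e^{-c\abs{\xi}^2 t}\lesssim t^{-1/2}$ in the low-frequency band after transferring one derivative onto the datum.

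I expect the main difficulty to be not any individual bound but the systematic bookkeeping forced by the non-smoothing high-frequency branch $\lambda_+\approx-1/\mu$: each estimate must be matched against the minimal Sobolev regularity of $v_0$ and $v_1$ separately, and the asymmetry between the two multipliers (the $\widehat{v_1}$ one gaining $\abs{\xi}^{-2}$) has to be tracked throughout in order to see that $v_1\in L^2$ suffices even for the second-order estimate \eqref{eq:vxxShi} while $v_0$ must lie in $H^2$. The transition annulus and the borderline dimension $n=2$, where the low-frequency integral of $\abs{\xi}^{-2}e^{-c\abs{\xi}^2 t}$ is only logarithmically divergent, are the places I would treat most carefully.
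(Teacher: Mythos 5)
Your proposal is correct, and all the individual multiplier bounds and the resulting rates check out (including the borderline $n=2$ case, where your saturation of $\sin(\omega t)/\omega$ at $t$ near $\xi=0$ in fact yields $\sqrt{\log(e+t)}$, slightly better than the stated bound, and the bookkeeping that puts $\dot H^{k}$ on $v_0$ but only $L^2$ on $v_1$ at high frequencies, which is exactly the regularity encoded in $\mathcal{D}_2^1$ and $\mathcal{D}_2^2$). However, your route is genuinely different from the paper's: the paper does introduce the same low/high frequency localization $E_0(t,x)(v)$, $E_\infty(t,x)(v)$ and works on the Fourier side via Parseval, but it does not carry out the multiplier analysis itself — it imports the low-frequency decay (Theorem 2.1) and the high-frequency exponential estimates with the $H^{[|\alpha|-2]^+}$ loss (Theorem 2.2) wholesale from Shibata's paper \cite{Shibata} and merely combines them into \eqref{eq:Shilow}, \eqref{eq:Shihigh} and \eqref{eq:Shi}. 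What your self-contained computation buys is transparency: the explicit roots $\lambda_\pm$ with $\lambda_+\lambda_-=|\xi|^2$, the non-smoothing branch $\lambda_+\to-1/\mu$, and the $|\xi|^{-2}$ gain in the $\widehat{v_1}$-multiplier make it visible \emph{why} $v_1\in L^2$ suffices even for \eqref{eq:vxxShi} while $v_0$ must be in $H^2$ — a point the paper only records as a consequence of the cited exponent $[|\alpha|-2]^+$ and then exploits in Remark~\ref{Rem:u1L2}. Your argument is in fact structurally parallel to how the paper itself treats the cases $\sigma\in(0,1/2)$ and $\sigma\in(1/2,1)$ in Theorems~\ref{Thm:parlin} and~\ref{Thm:hyplin}, so nothing is lost by doing $\sigma=1$ the same way; the only thing the citation-based proof buys is brevity.
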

\begin{Rem}\label{Rem:NR22}
One might expect the $L^2-L^2$ estimate
\begin{equation}
\label{eq:vShi2false} \|v(t,\cdot)\|_{L^2} \lesssim (1+t)^{\frac12} \, \|(v_0,v_1)\|_{L^2}
\end{equation}
for the solution~$v$ to~\eqref{eq:lin}. But, unfortunately, we are not able to prove~\eqref{eq:vShi2false} for~$n\geq2$. We refer to Proposition 11
in~\cite{NR}, which one can use to prove that
\begin{equation}
\label{eq:NR22} \|v(t,\cdot)\|_{L^2} \lesssim (1+t)^{[n/2]\left(1-\frac1{2\sigma}\right)} \, \left( \|v_0\|_{L^2} + t\, \|v_1\|_{L^2} \right)
\end{equation}
for any~$\sigma\in[1/2,1]$.
\end{Rem}
\begin{proof}
We denote by~$E_0(t,x)(v)$ and~$E_\infty(t,x)(v)$ the solution to~\eqref{eq:lin} localized to low and high frequencies, that is,
\begin{equation}\label{eq:Eloc}
E_0(t,x)(v) = \mathcal{F}^{-1} \left(\chi(\xi) \widehat{v}(t,\xi) \right), \quad E_\infty(t,x)(v) = \mathcal{F}^{-1} \left((1-\chi(\xi)) \widehat{v}(t,\xi)
\right),
\end{equation}
where~$\chi$ is a smooth function such that~$0\leq \chi(\xi)\leq1$ and $\chi(\xi)=1$ for~$|\xi|\leq1$, $\chi(\xi)=0$ for~$|\xi|\geq3/2$. Then the following
estimates can be concluded from \cite{Shibata}:
\begin{align}
\label{eq:Shilowsmall} \|\partial_t^j\partial_x^\alpha E_0(t,\cdot)(v)\|_{L^m}
    & \leq C_{p,m,j,\alpha} \|(v_0,v_1)\|_{L^p}
\intertext{for any~$t\leq2$ and~$1\leq p\leq m\leq\infty$ (Theorem 2.1 (1) from \cite{Shibata}), and}
\label{eq:Shihighgen} \|\partial_t^j\partial_x^\alpha E_\infty(t,\cdot)(v)\|_{L^m}
    & \leq C_{m,j,\alpha,N} e^{-ct} \left( t^{-N/2} \|(v_0,v_1)\|_{H^{[2j+|\alpha|-2-N]^+,m}} + \|v_0\|_{H^{|\alpha|,m}} + \|v_1\|_{H^{[|\alpha|-2]^+,m}} \right)
\intertext{for any~$t>0$, for any $N\in\N$ (also~$N=0$ is allowed) and for~$1<m<\infty$ (Theorem 2.2 (1) from \cite{Shibata}). Moreover, if we fix the
regularity~$L^1\cap L^2$ for the data, then one can prove that}
\label{eq:Shilowlarge} \|\partial_t^j\partial_x^\alpha E_0(t,\cdot)(v)\|_{L^2}
    & \leq \begin{cases}
        C_{0,0} \bigl((1+t)^{-\frac12}\|v_0\|_{L^1} + \log(2+t) \,\|v_1\|_{L^1}\bigr) & \text{if~$n=2$, $j=|\alpha|=0$,}\\
        C_{j,\alpha} (1+t)^{-\frac{n-2}4-\frac{j+|\alpha|}2} \bigl( (1+t)^{-\frac12}\|v_0\|_{L^1} + \|v_1\|_{L^1}\bigr) & \text{otherwise,}
        \end{cases}
\end{align}
for any~$t\geq2$ (Theorem 2.1 (3) from \cite{Shibata}).
\\
For the sake of simplicity, let~$n\geq3$ or~$j=|\alpha|=0$ in what follows, being this special case completely analogous. Combining~\eqref{eq:Shilowsmall}
for~$m=2$ and~$p=1$ with~\eqref{eq:Shilowlarge} we get
\begin{equation}
\label{eq:Shilow} \|\partial_t^j\partial_x^\alpha E_0(t,\cdot)(v)\|_{L^2} \leq C_{j,\alpha} (1+t)^{-\frac{n-2}4-\frac{j+|\alpha|}2} \left(
(1+t)^{-\frac12}\|v_0\|_{L^1} + \|v_1\|_{L^1}\right).
\end{equation}
Now let~$N=0$ and let either~$(j,|\alpha|)=(1,0)$ or $j=0$ and~$|\alpha|=0,1,2$, in~\eqref{eq:Shihighgen}. In such a case,
the term~$t^{-N/2} \|(v_0,v_1)\|_{H^{[2j+|\alpha|-2-N]^+,m}}$ is controlled by the other two in parentheses, that is,
\begin{equation}
\label{eq:Shihigh} \|\partial_t^j\partial_x^\alpha E_\infty(t,\cdot)(v)\|_{L^m}\leq C_{m,j,\alpha,0} e^{-ct} \left( \|v_0\|_{H^{|\alpha|,m}} +
\|v_1\|_{H^{[|\alpha|-2]^+,m}} \right).
\end{equation}
Therefore, from~\eqref{eq:Shilow} and~\eqref{eq:Shihigh} with~$m=2$ we obtain
\begin{equation}
\label{eq:Shi} \|\partial_t^j\partial_x^\alpha v(t,\cdot)\|_{L^2} \lesssim (1+t)^{-\frac{n-2}4-\frac{j+|\alpha|}2} \left( (1+t)^{-\frac12}\|v_0\|_{L^1\cap
H^{|\alpha|}} + \|v_1\|_{L^1\cap L^2}\right)
\end{equation}
for any~$t\geq0$ if either~$(j,|\alpha|)=(1,0)$ or $j=0$ and~$|\alpha|=0,1,2$. This concludes the proof of
\eqref{eq:vShi}-\eqref{eq:vtShi}-\eqref{eq:vxShi}-\eqref{eq:vxxShi}.
\\
To prove \eqref{eq:vtShi2}-\eqref{eq:vxShi2}-\eqref{eq:vxxShi2} it is sufficient to combine the $L^2-L^2$ estimates for~$\partial_t^j\partial_x^\alpha
E_0(t,x)(v)$ as they appear in Theorem 2.1, estimate (5) of~\cite{Shibata} with~\eqref{eq:Shihigh}.
\end{proof}


\subsection{The case~$\sigma\in(0,1/2)$}\label{sec:par}

In this case we want to prove the following statement.
\begin{Thm} \label{Thm:parlin}
Let~$\sigma\in(0,1/2)$ in~\eqref{eq:lin}. Let $n \geq 2$ and let~$(v_0,v_1)\in\mathcal{D}_2^1$. Then the solution to~\eqref{eq:lin} and its first derivatives
with respect to time and space satisfy the $(L^1\cap L^2)-L^2$ estimates
\begin{align}
\label{eq:vpar}
\|v(t,\cdot)\|_{L^2}
    & \lesssim (1+t)^{-\left(\frac{n}{4}-\sigma\right)\,\frac{1}{1-\sigma}} \|(v_0,v_1)\|_{L^1\cap L^2}, \\
\label{eq:vtpar}
\|v_t(t,\cdot)\|_{L^2}
    & \lesssim (1+t)^{-\left(\frac{n}{4}-\sigma\right)\,\frac{1}{1-\sigma}-1} \, \|(v_0,v_1)\|_{\mathcal{D}_2^1},\\
\label{eq:vxpar}
\|\nabla v(t,\cdot)\|_{L^2}
    & \lesssim (1+t)^{-\left(\frac{n+2}{4}-\sigma\right)\,\frac{1}{1-\sigma}} \, \|(v_0,v_1)\|_{\mathcal{D}_2^1},
\intertext{and the $L^2-L^2$ estimates}
\label{eq:vpar2}
\|v(t,\cdot)\|_{L^2}
    & \lesssim (1+t)
     \|(v_0,v_1)\|_{L^2}, \\
\label{eq:vtpar2}
\|v_t(t,\cdot)\|_{L^2}
    & \lesssim  \, \|(v_0,v_1)\|_{H^1\times L^2},\\
\label{eq:vxpar2}
\|\nabla v(t,\cdot)\|_{L^2}
    & \lesssim (1+t)^{-\frac12\,\frac{1-2\sigma}{1-\sigma}} \, \|(v_0,v_1)\|_{H^1\times L^2}.
\end{align}
\end{Thm}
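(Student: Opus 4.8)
The plan is to pass to the Fourier side and reduce \eqref{eq:lin} to the family of ordinary differential equations
\[ w_{tt} + \mu\abs{\xi}^{2\sigma} w_t + \abs{\xi}^2 w = 0, \qquad w(0,\xi)=\widehat{v_0}(\xi), \quad w_t(0,\xi)=\widehat{v_1}(\xi), \]
for $w(t,\xi)=\widehat{v}(t,\xi)$, and then to read off every stated bound through Parseval's formula — this is exactly where the choice $m=2$ is used, as anticipated in the text. The characteristic roots are
\[ \lambda_\pm(\xi)=\tfrac12\Bigl(-\mu\abs{\xi}^{2\sigma}\pm\sqrt{\mu^2\abs{\xi}^{4\sigma}-4\abs{\xi}^2}\,\Bigr). \]
Since $\sigma<1/2$ one has $4\sigma<2$, so the discriminant is positive for small $\abs{\xi}$ and negative for large $\abs{\xi}$. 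I would therefore split phase space into a low-frequency zone $\{\abs{\xi}\le\delta\}$, a high-frequency zone $\{\abs{\xi}\ge N\}$, and a bounded intermediate zone $\{\delta\le\abs{\xi}\le N\}$, with $\delta<N$ chosen around the threshold $\mu^2\abs{\xi}^{4\sigma}=4\abs{\xi}^2$, and estimate each zone separately.

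In the low-frequency zone the roots are real, and a Taylor expansion of the square root gives the \emph{parabolic splitting} $\lambda_+\sim-\mu^{-1}\abs{\xi}^{2-2\sigma}$ (the slow, decay-governing mode) and $\lambda_-\sim-\mu\abs{\xi}^{2\sigma}$, with $\lambda_+-\lambda_-\sim\mu\abs{\xi}^{2\sigma}$. Writing $w=K_0\,\widehat{v_0}+K_1\,\widehat{v_1}$ with $K_0=(\lambda_+e^{\lambda_-t}-\lambda_-e^{\lambda_+t})/(\lambda_+-\lambda_-)$ and $K_1=(e^{\lambda_+t}-e^{\lambda_-t})/(\lambda_+-\lambda_-)$, the essential feature is that $K_0\sim e^{\lambda_+t}$ while $K_1$ carries the singular weight $\abs{\xi}^{-2\sigma}$ at the origin. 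The whole mechanism reduces to the elementary scaling estimates
\[ \int_0^\delta r^{a}e^{-cr^{2-2\sigma}t}\,dr\lesssim t^{-\frac{a+1}{2-2\sigma}},\qquad \sup_{0<r\le\delta}r^{a}e^{-cr^{2-2\sigma}t}\lesssim t^{-\frac{a}{2-2\sigma}}, \]
valid for large $t$ and $a>-1$. For the $(L^1\cap L^2)$--$L^2$ estimates I would bound $\abs{\widehat{v_j}}\le\|v_j\|_{L^1}$ and use the integral identity: the slowest rate comes from the $\widehat{v_1}$ part, whose weight after $\ell$ spatial derivatives is $\abs{\xi}^{2(\ell-2\sigma)}$, producing exactly the exponents $-(n/4-\sigma)/(1-\sigma)$ in \eqref{eq:vpar} and $-((n+2)/4-\sigma)/(1-\sigma)$ in \eqref{eq:vxpar}; integrability at the origin requires $n>4\sigma$, which is guaranteed by $n\ge2$, $\sigma<1/2$. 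A time derivative multiplies $K_0,K_1$ by $-\abs{\xi}^2$ (the product of the roots), hence contributes one extra factor $\abs{\xi}^{2-2\sigma}$ in the slow mode and upgrades the decay by one power of $t$, reproducing \eqref{eq:vtpar}.

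For the $L^2$--$L^2$ estimates I would instead keep $\abs{\widehat{v_j}}$ inside the integral and exploit the representation $K_1=\int_0^t e^{\lambda_+s+\lambda_-(t-s)}\,ds$, which yields the two competing bounds $\abs{K_1}\le t\,e^{\lambda_+t}$ and $\abs{K_1}\le e^{\lambda_+t}/(\lambda_+-\lambda_-)$. The first gives the $(1+t)$ growth in \eqref{eq:vpar2} (and boundedness in \eqref{eq:vtpar2}); the second, combined with $\sup_r r^{2-4\sigma}e^{-cr^{2-2\sigma}t}\lesssim t^{-(1-2\sigma)/(1-\sigma)}$, gives the genuine decay $(1+t)^{-\frac12\frac{1-2\sigma}{1-\sigma}}$ in \eqref{eq:vxpar2}. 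In the high-frequency zone the roots are complex with real part $-\tfrac\mu2\abs{\xi}^{2\sigma}$ and imaginary part $\sim\abs{\xi}$, so every multiplier carries the factor $e^{-\frac\mu2\abs{\xi}^{2\sigma}t}\le e^{-ct}$ and the $\sin$-type multiplier gains a factor $\abs{\xi}^{-1}$; one checks directly that all high-frequency contributions decay exponentially while requiring only $v_0\in H^1$ (in the derivative estimates) and $v_1\in L^2$, matching the $\mathcal{D}_2^1$ regularity on the right-hand sides.

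The hard part, and the only genuinely delicate point, will be the intermediate zone $\{\delta\le\abs{\xi}\le N\}$: there the two roots coalesce on the threshold surface $\mu^2\abs{\xi}^{4\sigma}=4\abs{\xi}^2$, so $\lambda_+-\lambda_-\to0$ and the split formula for $K_0,K_1$ degenerates into the double-root expression $t\,e^{\lambda t}$. Since this zone is compact and bounded away from both $0$ and $\infty$, I would not use the split representation there; instead I would estimate $w$ and its derivatives directly, using that the exact solution of the ODE depends continuously (indeed analytically) on the spectral parameter, so all multipliers remain smooth and uniformly bounded by $C e^{-ct}$ with $c>0$. Assembling the three zones via Parseval then yields \eqref{eq:vpar}--\eqref{eq:vxpar2}.
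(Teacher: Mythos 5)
Your proposal is correct and follows essentially the same route as the paper's proof: Fourier transform and Parseval, the three-zone splitting around the threshold $\mu^2|\xi|^{4\sigma}=4|\xi|^2$, the low-frequency mode asymptotics $\lambda_+\approx-|\xi|^{2(1-\sigma)}$, $\lambda_-\approx-|\xi|^{2\sigma}$, $\lambda_+-\lambda_-\approx|\xi|^{2\sigma}$ with scaling in $\xi\,t^{\frac1{2(1-\sigma)}}$ for the $(L^1\cap L^2)$--$L^2$ rates, exponential decay on the middle and high zones with only $H^1\times L^2$ regularity, and pointwise multiplier bounds (the paper uses the equivalent representation $\widehat{K_1}=t e^{\lambda_+t}\int_0^1 e^{-t\theta\sqrt{\mu^2|\xi|^{4\sigma}-4|\xi|^2}}\,d\theta$) for the $L^2$--$L^2$ estimates. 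The one detail worth making explicit is that for $\partial_t K_1$ the fast-mode contribution $e^{-c|\xi|^{2\sigma}t}$ must be checked to decay no slower than the slow mode; the paper verifies this via $\frac{4(1-2\sigma)+n}{2(1-\sigma)}\leq\frac{n}{2\sigma}$, which holds for $n\geq2$ and $\sigma\in(0,1/2)$.
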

\begin{proof}
We claim that
\begin{equation}\label{eq:D012}
\|\partial_t^j\partial_x^\alpha u(t,\cdot)\|_{L^2} \lesssim (1+t)^{-\left(\frac{n}4+\frac{|\alpha|}2\right)\, \frac1{1-\sigma}-j} \|u_0\|_{L^1\cap
H^{|\alpha|}} + (1+t)^{-\left(\frac{n}4+\frac{|\alpha|}2-\sigma\right)\,\frac1{1-\sigma}-j} \|u_1\|_{L^1\cap L^2}
\end{equation}
for any~$t>0$ and~$j+|\alpha|=0,1$, and even for any~$\sigma\in[0,1/2)$. Once we have proved~\eqref{eq:D012} our estimates
\eqref{eq:vpar}-\eqref{eq:vtpar}-\eqref{eq:vxpar} follow immediately. We can write the solution to~\eqref{eq:lin} as
\begin{equation}
\label{eq:vK} v(t,x) = K_0(t,x) \ast_{(x)} v_0(x) + K_1(t,x)\ast_{(x)} v_1(x),
\end{equation}
where
\begin{equation}
\label{eq:vF} \widehat{K_0} (t,\xi) = \frac{\lambda_+e^{\lambda_-\,t}-\lambda_-e^{\lambda_+\,t}}{\lambda_+-\lambda_-}, \quad \widehat{K_1} (t,\xi) =
\frac{e^{\lambda_-\,t}-e^{\lambda_+\,t}}{\lambda_+-\lambda_-}.
\end{equation}
The characteristic roots~$\lambda_\pm(\xi)$ have non-positive real parts and they are given by
\begin{equation}\label{eq:lambdapm}
\lambda_\pm = \begin{cases}
\bigl(-\mu \pm \sqrt{\mu^2-4|\xi|^{2(1-2\sigma)}}\bigr)\, |\xi|^{2\sigma}/2 & \text{if~$|\xi|^{1-2\sigma} \leq \mu/2$, the roots are real-valued,}\\
\bigl(-\mu \pm i\,\sqrt{4|\xi|^{2(1-2\sigma)}-\mu^2}\bigr)\, |\xi|^{2\sigma}/2 & \text{if~$|\xi|^{1-2\sigma} \geq \mu/2$, the roots are complex-valued.}
\end{cases}
\end{equation}
As in the proof of Theorem~\ref{Thm:Shilin} we denote by~$E_0(t,x)(v)$ and~$E_\infty(t,x)(v)$ the solution to~\eqref{eq:lin} localized to low and high
frequencies. We notice that
\begin{equation}\label{eq:lambdalowpar}
\lambda_+\approx \lambda_+^{(l)} \doteq -|\xi|^{2(1-\sigma)}, \quad \lambda_-\approx \lambda_-^{(l)} \doteq -|\xi|^{2\sigma}, \quad
\lambda_+-\lambda_-\approx \delta^{(l)} \doteq |\xi|^{2\sigma},
\end{equation}
for low frequencies~$|\xi| \leq \varepsilon$, whereas
\begin{equation}\label{eq:lambdahighpar}
\lambda_\pm\approx \lambda_\pm^{(h)} \doteq -|\xi|^{2\sigma}\pm i|\xi|, \quad \lambda_+-\lambda_-\approx \delta^{(h)} \doteq i|\xi|,
\end{equation}
for high frequencies~$|\xi| \geq \frac{1}{\varepsilon}$, where $\varepsilon$ is sufficiently small.

\noindent We shall estimate the~$L^2$ norm of~$E_0(t,x)(v)$, $\partial_t E_0(t,x)(v)$ and $\nabla E_0(t,x)(v)$ by the~$L^1$ norm of $(v_0,v_1)$. Due to
Young's inequality we have to estimate~$\|\chi(D)\partial_t^j\partial_x^\alpha K_0(t,\cdot)\|_{L^2}, \,\, \|\chi(D)\partial_t^j\partial_x^\alpha K_1(t,\cdot)\|_{L^2}$
(see~\eqref{eq:vK}) for~$j+|\alpha|=0,1$. Here $\chi=\chi(\xi)$ is a smooth decreasing function with
$\chi(\xi)=1$ for $|\xi|\leq \varepsilon/2$ and $\chi(\xi)=0$ for $|\xi|\geq \varepsilon$. Due to Parseval's formula we have to estimate
\begin{align}
\label{eq:I0} I_0^2(j,|\alpha|)
    & \doteq \int_{\R^n} \frac{\bigl|\partial_t^j\bigl(\lambda_+ e^{\lambda_- t}-\lambda_- e^{\lambda_+ t}\bigr)\bigr|^2}
    {|\lambda_+ - \lambda_-|^2} |\xi|^{2|\alpha|} \, \chi(\xi)^2 d\xi,\\
\label{eq:I1} I_1^2(j,|\alpha|)
    & \doteq \int_{\R^n} \frac{\bigl|\partial_t^j\bigl(e^{\lambda_+  t}-e^{\lambda_- t}\bigr)\bigr|^2}
    {|\lambda_+ - \lambda_-|^2} |\xi|^{2|\alpha|}
    \, \chi(\xi)^2 d\xi.
\end{align}
We only estimate~$I_1(j,|\alpha|)$. These integrals imply the decay in the estimates
\eqref{eq:vpar}-\eqref{eq:vtpar}-\eqref{eq:vxpar}. In same way we estimate~$I_0(j,|\alpha|)$. We get for $j=0$
\[ \frac{\bigl|e^{\lambda_+  t}-e^{\lambda_- t}\bigr|^2}
    {|\lambda_+ - \lambda_-|^2}\chi(\xi)^2 \approx \frac{\bigl|e^{\lambda_+^{(l)} t}-e^{\lambda_-^{(l)}t}\bigr|^2}{{\delta^{(l)}}^2} \approx \frac{e^{-2|\xi|^{2(1-\sigma)}t}}{|\xi|^{4\sigma}}, \]
and for $j=1$
\[ \frac{\bigl|\partial_t\bigl(e^{\lambda_+  t}-e^{\lambda_- t}\bigr)\bigr|^2}
    {|\lambda_+ - \lambda_-|^2}\chi(\xi)^2 \approx
     \frac{\bigl|\lambda_+^{(l)}
e^{\lambda_+^{(l)} t}-\lambda_-^{(l)}e^{\lambda_-^{(l)}t}\bigr|^2}{{\delta^{(l)}}^2} \lesssim \frac{|\xi|^{4(1-\sigma)} e^{-2|\xi|^{2(1-\sigma)}t} + |\xi|^{4 \sigma} e^{-2|\xi|^{2\sigma}t}}{|\xi|^{4\sigma}}. \]
By the change of variables~$\eta=\xi\,t^{\frac1{2(1-\sigma)}}$ we get
\begin{eqnarray*} && I_1^2(0,|\alpha|) \lesssim \int_{|\xi|\leq \varepsilon} |\xi|^{-4\sigma +2|\alpha|+n-1} e^{-2|\xi|^{2(1-\sigma)}t} \,d|\xi| \lesssim t^{\frac{4\sigma-n-2|\alpha|}{2(1-\sigma)}} \int_0^\infty
|\eta|^{-4\sigma +2|\alpha|+ n-1} e^{-2|\eta|} \,d\eta \\ && \qquad \lesssim t^{\frac{4\sigma-n-2|\alpha|}{2(1-\sigma)}}\,\,\,\mbox{for large}\,\,\,t. \end{eqnarray*}
We remark that~$-4\sigma +2|\alpha|+n > 0$ for any~$n\geq2$. The same reasoning gives
\begin{eqnarray*} && I_1^2(1,|\alpha|) \lesssim \int_{|\xi|\leq \varepsilon} |\xi|^{n-1+2|\alpha|} \bigl(|\xi|^{4(1-2\sigma)} e^{-2|\xi|^{2(1-\sigma)}t} + e^{-2|\xi|^{2\sigma}t}\bigr) \,d|\xi| \\ && \qquad \lesssim t^{-\frac{4(1-2\sigma)+n+2|\alpha|}{2(1-\sigma)}} \int_0^\infty
|\eta|^{4(1-2\sigma) +2|\alpha|+ n-1} e^{-2|\eta|} \,d\eta + t^{\frac{-n-2|\alpha|}{2\sigma}} \int_0^\infty
|\eta|^{2|\alpha|+ n-1} e^{-2|\eta|} \,d\eta\\ && \qquad \lesssim t^{\frac{-4(1-2\sigma)-n-2|\alpha|}{2(1-\sigma)}}
\,\,\,\mbox{for large}\,\,\,t.\end{eqnarray*}
In the last step we used for $n \geq 2$ the inequality
\[    \frac{4(1-2\sigma) +2|\alpha|+ n}{2(1-\sigma)} \leq \frac{n+2|\alpha|}{2\sigma}.\]
Now we have to estimate the $L^2$ norm of~$\partial_t^j\partial_x^\alpha E_\infty(t,x)(v)$ for~$j+|\alpha|=0,1$, which is equivalent to the~$L^2$ norm of $|\xi|^\alpha\partial_t^j F_{x \to \xi}\big(E_\infty(t,x)(v)\big)$. It is sufficient to estimate for large frequencies $|\xi|\geq \frac{1}{\varepsilon}$
\begin{align*}
|\widehat{K_0} (t,\xi)| & \lesssim e^{-|\xi|^{2\sigma}t}, \\
\langle \xi \rangle |\widehat{K_1} (t,\xi)| & \lesssim  \, e^{-|\xi|^{2\sigma}t}, \\
\langle \xi \rangle^{-1} |\partial_t \widehat{K_0} (t,\xi)| & \lesssim \langle \xi \rangle^{-1} \, \bigl(\langle \xi \rangle e^{-|\xi|^{2\sigma}t}\bigr), \\
|\partial_t \widehat{K_1} (t,\xi)| & \lesssim e^{-|\xi|^{2\sigma}t}.
\end{align*}
 All terms are controlled by~$e^{-C^{2\sigma}t}$ for~$|\xi|\geq \frac{1}{\varepsilon}$ (uniformly with respect to~$t>0$). Indeed, due to Parseval's formula the $L^2$ norm of $\widehat{v_0}, \widehat{v_1}, |\xi|\widehat{v_0}$ are equivalent to the $L^2$ norm of $v_0, v_1, \nabla v_0$.
\\
We remark that the exponential decay~$e^{-C^{2\sigma}t}$ for the high frequencies is better than the potential decay for the low frequencies. The middle zone $\{|\xi|\in [\varepsilon,\frac{1}{\varepsilon}]\}$ brings an exponential decay ~$e^{-C^{2\sigma}t}$, too, if we recall that the real part of the characteristic roots $\lambda_{\pm}$ is negative there. This concludes the proof of~\eqref{eq:D012}.

To prove~\eqref{eq:vpar2}-\eqref{eq:vtpar2}-\eqref{eq:vxpar2} it is sufficient to estimate the $L^2$ norm of $|\xi|^\alpha\partial_t^j F_{x \to \xi}\big(E_0(t,x)(v)\big)$ for small frequencies and for~$j+|\alpha|=0,1$.
By using the presented approach one can directly derive~\eqref{eq:vxpar2}.
To prove~\eqref{eq:vpar2} and~\eqref{eq:vtpar2} we have to estimate the $L^\infty$ norm of $\partial_t^j\widehat{K_i}(t,\xi)$ for~$i,j=0,1$. Here the estimates for $\partial_t^j\widehat{K_1}(t,\xi)$
are of interest. Using
\[ \widehat{K_1}(t,\xi)=t e^{\lambda_+ t}\int_0^1 e^{-t\theta\sqrt{|\xi|^{4\sigma}-4|\xi|^2}\,d\theta} \]
we obtain
\[ \|\widehat{K_1}(t,\cdot)\|_{L^\infty(\R^n_\xi)} \lesssim t,\quad \|\partial_t\widehat{K_1}(t,\cdot)\|_{L^\infty(\R^n_\xi)} \lesssim 1.  \]
This completes the proof.
\end{proof}


\subsection{The case~$\sigma\in(1/2,1)$}\label{sec:hyp}

In this case we want to prove the following statement.
\begin{Thm} \label{Thm:hyplin}
Let~$\sigma\in(1/2,1)$ in~\eqref{eq:lin}. Let $n\geq2$ and let~$(v_0,v_1)\in\mathcal{D}_2^1$. Then the solution to~\eqref{eq:lin}, its first derivatives in time and space, and its derivative in space of fractional order~$2\sigma$ satisfy the $(L^1\cap L^2)-L^2$ estimates
\begin{align}
\label{eq:vhyp}
\|v(t,\cdot)\|_{L^2}
    & \lesssim \begin{cases}
    (1+t)^{-\frac{n-2}{4\sigma}} \|(v_0,v_1)\|_{L^1\cap L^2}, & \text{if~$n\geq3$,}\\
    \log (e+t) \|(v_0,v_1)\|_{L^1\cap L^2}, & \text{if~$n=2$,}
    \end{cases} \\
\label{eq:vthyp} \|v_t(t,\cdot)\|_{L^2}
    & \lesssim (1+t)^{-\frac{n}{4\sigma}} \, \|(v_0,v_1)\|_{\mathcal{D}_2^{2(1-\sigma)}},\\
\label{eq:vxhyp}
\|\nabla v(t,\cdot)\|_{L^2}
    & \lesssim (1+t)^{-\frac{n}{4\sigma}} \|(v_0,v_1)\|_{\mathcal{D}_2^1},\\
\label{eq:vxxhyp}
\|v(t,\cdot)\|_{\dot{H}^{2\sigma}}
    & \lesssim (1+t)^{-\frac{n-2}{4\sigma}-1} \|(v_0,v_1)\|_{\mathcal{D}_2^{2\sigma}},
\intertext{and the $L^2-L^2$ estimate}
\label{eq:vhyp2}
\|v(t,\cdot)\|_{L^2}
    & \lesssim \, (1+t)\|(v_0,v_1)\|_{L^2},\\
\label{eq:vthyp2} \|v_t(t,\cdot)\|_{L^2}
    & \lesssim \, \|(v_0,v_1)\|_{H^{2(1-\sigma)}\times L^2},\\
\label{eq:vxhyp2}
\|\nabla v(t,\cdot)\|_{L^2}
    & \lesssim \|(v_0,v_1)\|_{H^1\times L^2},\\
\label{eq:vxxhyp2}
\|v(t,\cdot)\|_{\dot{H}^{2\sigma}}
    & \lesssim \|(v_0,v_1)\|_{H^{2\sigma}\times L^2}.
\end{align}
\end{Thm}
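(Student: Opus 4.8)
The plan is to follow closely the proof of Theorem~\ref{Thm:parlin}: write the solution as in~\eqref{eq:vK}, $v=K_0(t,\cdot)\ast v_0+K_1(t,\cdot)\ast v_1$, with $\widehat{K_0},\widehat{K_1}$ given by~\eqref{eq:vF}, localize to low, middle and high frequencies through the cut-off $\chi$, and estimate each zone by Parseval's formula. The structural novelty with respect to the parabolic regime is that for $\sigma>1/2$ the discriminant $\mu^2\abs{\xi}^{4\sigma}-4\abs{\xi}^2=\abs{\xi}^2\bigl(\mu^2\abs{\xi}^{4\sigma-2}-4\bigr)$ is \emph{negative} for small $\abs{\xi}$ and \emph{positive} for large $\abs{\xi}$, so the roots are oscillatory at low frequencies and real at high frequencies, the reverse of~\eqref{eq:lambdalowpar}-\eqref{eq:lambdahighpar}. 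First I would record, expanding the square root (using $4\sigma-2>0$, resp. $2-4\sigma<0$), the asymptotics
\begin{equation*}
\lambda_\pm\approx-\tfrac\mu2\abs{\xi}^{2\sigma}\pm i\abs{\xi},\quad\lambda_+-\lambda_-\approx2i\abs{\xi}\qquad(\abs{\xi}\leq\varepsilon),
\end{equation*}
\begin{equation*}
\lambda_+\approx-\tfrac1\mu\abs{\xi}^{2(1-\sigma)},\quad\lambda_-\approx-\mu\abs{\xi}^{2\sigma},\quad\lambda_+-\lambda_-\approx\mu\abs{\xi}^{2\sigma}\qquad(\abs{\xi}\geq\tfrac1\varepsilon),
\end{equation*}
together with the exact identity $\partial_t\widehat{K_0}=\lambda_+\lambda_-\widehat{K_1}=\abs{\xi}^2\widehat{K_1}$, which will unify the time-derivative estimates in both zones.

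For the low-frequency part I would estimate the analogues of $I_0,I_1$ in~\eqref{eq:I0}-\eqref{eq:I1}. Since the roots are complex with $\mathrm{Re}\,\lambda_\pm=-\tfrac\mu2\abs{\xi}^{2\sigma}$ and $\abs{\lambda_+-\lambda_-}\approx\abs{\xi}$, one has $\abs{\widehat{K_0}}\lesssim e^{-\frac\mu2\abs{\xi}^{2\sigma}t}$ and $\abs{\widehat{K_1}}\lesssim\abs{\xi}^{-1}e^{-\frac\mu2\abs{\xi}^{2\sigma}t}$. The rates then come from the change of variables $\eta=\xi\,t^{1/(2\sigma)}$ used in Theorem~\ref{Thm:parlin}, but now with exponent $2\sigma$ in the weight: the $K_1$-term contributes
\begin{equation*}
\int_{\abs{\xi}\leq\varepsilon}\abs{\xi}^{2|\alpha|-2}e^{-\mu\abs{\xi}^{2\sigma}t}\,d\xi\approx t^{-\frac{n-2+2|\alpha|}{2\sigma}},
\end{equation*}
so $\|v\|_{L^2}$ decays like $t^{-(n-2)/(4\sigma)}$ and $\|\nabla v\|_{L^2}$ like $t^{-n/(4\sigma)}$, while the $K_0$-term is faster by the missing $\abs{\xi}^{-2}$. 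The borderline case $n=2$, $|\alpha|=0$ makes $\abs{\xi}^{-2}\abs{\xi}^{n-1}$ only logarithmically integrable at the origin, which is the source of the $\log(e+t)$ in~\eqref{eq:vhyp}. The time derivative acts through $\abs{\partial_t\widehat{K_1}}\lesssim e^{-\frac\mu2\abs{\xi}^{2\sigma}t}$ and $\partial_t\widehat{K_0}=\abs{\xi}^2\widehat{K_1}$, so $\|v_t\|_{L^2}$ inherits the rate $t^{-n/(4\sigma)}$ from the $v_1$-term; the $\dot{H}^{2\sigma}$-weight multiplies $\widehat{K_1}$ by $\abs{\xi}^{2\sigma}$, and the same scaling integral yields the rate $t^{-(n-2)/(4\sigma)-1}$ of~\eqref{eq:vxxhyp}.

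The high-frequency part carries the \emph{loss of regularity}. Here the slowly decaying real root dominates, giving $\abs{\widehat{K_0}}\lesssim e^{-c\abs{\xi}^{2(1-\sigma)}t}$ and, through $\widehat{K_1}\approx\abs{\xi}^{-2\sigma}\widehat{K_0}$, the bound $\abs{\widehat{K_1}}\lesssim\abs{\xi}^{-2\sigma}e^{-c\abs{\xi}^{2(1-\sigma)}t}$. Since $\abs{\xi}^{2(1-\sigma)}\geq\varepsilon^{-2(1-\sigma)}$ on $\{\abs{\xi}\geq1/\varepsilon\}$, the exponential already gives \emph{uniform} decay $e^{-ct}$; the point is that the powers of $\abs{\xi}$ created by the derivatives must be paid for by Sobolev regularity of the data. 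Concretely $\abs{\xi}^{|\alpha|}\widehat{K_0}$ needs $v_0\in H^{|\alpha|}$, the identity gives $\partial_t\widehat{K_0}=\abs{\xi}^2\widehat{K_1}\approx\abs{\xi}^{2(1-\sigma)}\widehat{K_0}$ needing $v_0\in H^{2(1-\sigma)}$, and $\abs{\xi}^{2\sigma}\widehat{K_0}$ needs $v_0\in H^{2\sigma}$; as each $\widehat{K_1}$ contribution carries the extra factor $\abs{\xi}^{-2\sigma}$, the matching $v_1$-terms never require more than $L^2$. This fixes the data spaces $\mathcal{D}_2^1,\mathcal{D}_2^{2(1-\sigma)},\mathcal{D}_2^{2\sigma}$ in~\eqref{eq:vxhyp}, \eqref{eq:vthyp}, \eqref{eq:vxxhyp}. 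The middle zone is harmless, $\mathrm{Re}\,\lambda_\pm$ being bounded away from zero there.

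Finally, the $L^2$-$L^2$ estimates~\eqref{eq:vhyp2}-\eqref{eq:vxxhyp2} are obtained by discarding the $L^1$ information and bounding the multipliers in $L^\infty$. Using the $\theta$-integral representation of $\widehat{K_1}$ from the end of the proof of Theorem~\ref{Thm:parlin} one gets $\|\widehat{K_1}(t,\cdot)\|_{L^\infty}\lesssim t$ (the quotient $\abs{\xi}^{-1}\sin(\abs{\xi}t)\to t$ as $\xi\to0$), producing the factor $(1+t)$ in~\eqref{eq:vhyp2}, while $\|\widehat{K_0}(t,\cdot)\|_{L^\infty}\lesssim1$; the remaining estimates follow by pairing the appropriate power of $\abs{\xi}$ with the matching Sobolev norm of the data. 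I expect the main obstacle to be the uniform justification of the root asymptotics across the transition region, where the discriminant changes sign near $\abs{\xi}^{4\sigma-2}\approx\mu^2/4$, together with the careful bookkeeping at the origin that separates the genuine decay rate from the $n=2$ logarithmic loss; both are routine but must be handled with the same care as in Theorem~\ref{Thm:parlin}.
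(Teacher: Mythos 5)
Your proposal follows essentially the same route as the paper: the Fourier representation via $K_0,K_1$, the observation that the roles of low and high frequencies in \eqref{eq:lambdalowpar}--\eqref{eq:lambdahighpar} are exchanged for $\sigma>1/2$, the change of variables $\eta=\xi\,t^{1/(2\sigma)}$ at low frequencies, the uniform-in-$t$ exponential decay at high frequencies with the powers of $|\xi|$ charged to the Sobolev regularity of $v_0$ (and the factor $|\xi|^{-2\sigma}$ in $\widehat{K_1}$ sparing $v_1$ any regularity), and the $L^\infty$ multiplier bounds $\|\widehat{K_1}(t,\cdot)\|_{L^\infty}\lesssim t$, $\|\widehat{K_0}(t,\cdot)\|_{L^\infty}\lesssim 1$ for the $L^2$--$L^2$ estimates. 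The identity $\partial_t\widehat{K_0}=\lambda_+\lambda_-\widehat{K_1}=|\xi|^2\widehat{K_1}$ is a nice way to package the time-derivative bookkeeping that the paper carries out by hand.

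The one step that fails as written is the borderline case $n=2$, $j=|\alpha|=0$. You claim that $|\xi|^{-2}|\xi|^{n-1}=|\xi|^{-1}$ is ``only logarithmically integrable at the origin'' and that this produces the $\log(e+t)$; but $\int_0^{\varepsilon} r^{-1}\,dr$ diverges, and the factor $e^{-\mu|\xi|^{2\sigma}t}$ is of no help near $\xi=0$, so the bound $|\widehat{K_1}|\lesssim |\xi|^{-1}e^{-\frac{\mu}{2}|\xi|^{2\sigma}t}$ alone gives an infinite integral. The logarithm in $t$ comes from a different mechanism: one must exploit the exact representation \eqref{eq:K1small}, i.e.
\[
\widehat{K_1}(t,\xi)=e^{-\frac{\mu}{2}|\xi|^{2\sigma}t}\,t\,\frac{\sin(\alpha t)}{\alpha t},\qquad \alpha=\tfrac12|\xi|^{2\sigma}\sqrt{4|\xi|^{2(1-2\sigma)}-\mu^2},
\]
which yields the improved bound $|\widehat{K_1}|\lesssim t\,e^{-\frac{\mu}{2}|\xi|^{2\sigma}t}$ for $\alpha\leq 1/t$, and then split the integral at the radius $\rho(t)\approx 1/t$ where the two bounds cross; the inner part contributes $(t\rho(t))^2\approx 1$ and the outer part $\int_{\rho(t)\leq|\xi|\leq\varepsilon}|\xi|^{-2}\,d\xi\approx\log(e+\rho(t)^{-1})\approx\log(e+t)$. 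Once this splitting is inserted (and the removable singularity of $\widehat{K_0},\widehat{K_1}$ at the frequency where the discriminant vanishes is noted, which you already flag), your argument coincides with the paper's proof.
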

\begin{proof}
We claim that
\begin{equation}
\label{eq:Shi121}
\|\partial_t^j v(t,\cdot)\|_{\dot{H}^\kappa}\leq C_{j,s} (1+t)^{-\frac{n-2}{4\sigma}-\frac{j+\kappa}{2\sigma}} \left( (1+t)^{-\frac1{2\sigma}}\|v_0\|_{L^1\cap H^{2j(1-\sigma)+\kappa}} + \|v_1\|_{L^1\cap L^2}\right)
\end{equation}
for either~$(j,\kappa)=(1,0)$, or $j=0$ and~$\kappa=0,1,2\sigma$, an exception is given for the case~$n=2$ and $j=\kappa=0$. In the following we
put~$\dot{H}^0=H^0=L^2$. Analogously to the proof of Theorem~\ref{Thm:parlin} the characteristic roots~$\lambda_\pm(\xi)$ are given by~\eqref{eq:lambdapm},
but now we have~\eqref{eq:lambdahighpar} for low frequencies~$|\xi| \leq \varepsilon$ and~\eqref{eq:lambdalowpar} for high frequencies~$|\xi| \geq
\frac{1}{\varepsilon}$ (since now~$1-2\sigma<0$ in~\eqref{eq:lambdapm}). That is, formulas for~$\lambda_\pm^{(l)},\delta^{(l)}$
and~$\lambda_\pm^{(h)},\delta^{(h)}$ are exchanged. The middle frequencies $\{|\xi|\in [\varepsilon,\frac{1}{\varepsilon}]\}$ are considered as in
Theorem~\ref{Thm:parlin}.
\\
Again we denote by~$E_0(t,x)(v)$ and~$E_\infty(t,x)(v)$ the solution to~\eqref{eq:lin} localized to low and high frequencies. First we estimate the $L^2$ norms of $|\xi|^\kappa\partial_t\widehat{K_i}$ for~$i=0,1$. Again we introduce $I_0(j,\kappa)$ and~$I_1(j,\kappa)$ as in \eqref{eq:I0}-\eqref{eq:I1} and the essential estimates appear from~$I_1(j,\kappa)$. By the change of variables~$\eta=\xi\,t^{\frac1{2\sigma}}$ we get for small frequencies
\begin{eqnarray*} &&  I_1^2(j,\kappa) \lesssim \int_{|\xi|\leq \varepsilon} |\xi|^{2(\kappa+j-1)} e^{-2|\xi|^{2\sigma}t} \,d\xi \lesssim t^{\frac{2(1-\kappa-j)-n}{2\sigma}} \int_{0}^\infty |\eta|^{2(\kappa+j-1)} |\eta|^{n-1} e^{-2|\eta|} \,d|\eta| \\ && \qquad \lesssim  t^{\frac{2(1-\kappa-j)-n}{2\sigma}}. \end{eqnarray*}
We remark that~$2(\kappa+j-1) > -n$ for any~$n\geq3$ and for $n=2$ if~$\kappa+j>0$. If~$n=2$ and $\kappa=j=0$, then we use for small frequencies the
relation
\begin{equation}\label{eq:K1small}
\widehat{K_1}(t,\xi)=e^{-\frac{\mu}{2}|\xi|^{2\sigma}t} t \frac{\sin(\alpha t)}{\alpha t}\,\,\,\mbox{with}\,\,\,
\alpha=\frac{1}{2}|\xi|^{2\sigma}\sqrt{4|\xi|^{2(1-2\sigma)}-\mu^2}.
\end{equation}
For any~$t>0$ let the function~$\rho=\rho(t)$ be defined by
\[ \frac{1}{2}\rho^{2\sigma}\sqrt{4\rho^{2(1-2\sigma)}-\mu^2} = \frac1t. \]
Since~$\sin (\alpha t)\lesssim \alpha t$ for any~$\alpha\leq 1/t$, and~$n=2$, it follows that
\[ I_1^2(0,0) \lesssim \int_{|\xi|\leq\rho(t)} e^{-\mu\,|\xi|^{2\sigma}t} t^2\,d\xi + \int_{|\xi|\geq\rho(t)} |\xi|^{-2} e^{-\mu\,|\xi|^{2\sigma}t}
\,d\xi \lesssim (t\rho(t))^2 + \log (e+\rho(t)^{-1}) \approx \log (e+t). \]
Indeed, $\rho(t)\approx 1/t$. Analogously, we proceed for~$I_0(j,\kappa)$. Then we estimate for large frequencies
\begin{align*}
|\widehat{K_0} (t,\xi)| & \lesssim e^{-|\xi|^{2(1-\sigma)}t}, \\
\langle \xi \rangle^{2\sigma} |\widehat{K_1} (t,\xi)| & \lesssim  \, e^{-|\xi|^{2(1-\sigma)}t}, \\
\langle \xi \rangle^{-2(1-\sigma)} |\partial_t \widehat{K_0} (t,\xi)| & \lesssim \langle \xi \rangle^{-2(1-\sigma)} \, \bigl(\langle \xi \rangle^{2(1-\sigma)} e^{-|\xi|^{2(1-\sigma)}t}\bigr), \\
|\partial_t \widehat{K_1} (t,\xi)| & \lesssim \, e^{-|\xi|^{2(1-\sigma)}t}
\end{align*}
which are all controlled by~$e^{-|\xi|^{2(1-\sigma)}t}$ for~$|\xi|\geq \frac{1}{\varepsilon}$ (uniformly with respect to~$t>0$).
Analogously to the proof of Theorem~\ref{Thm:parlin} this concludes the proof of~\eqref{eq:Shi121}.
For the proof of \eqref{eq:vhyp2} we use relation~\eqref{eq:K1small} for small frequencies. The proof of
\eqref{eq:vthyp2}-\eqref{eq:vxhyp2}-\eqref{eq:vxxhyp2} immediately follows since $|\xi|^\kappa\widehat{K_i}(t,\xi)$ with~$\kappa\geq1$ and
$\partial_t\widehat{K_i}(t,\xi)$ are bounded for small frequencies and~$i=0,1$. Indeed,
\[ |\xi|^\kappa\,\bigl(|\widehat{K_0} (t,\xi)| + |\widehat{K_1} (t,\xi)|\bigr) + |\partial_t \widehat{K_0} (t,\xi)| + |\partial_t \widehat{K_1} (t,\xi)| \lesssim \frac{|\xi| e^{-|\xi|^{2\sigma}\,t}}{|\xi|} \lesssim 1.
\]
This completes the proof. \end{proof}
\begin{Rem} \label{Rem8}
The goal of this section was to prove linear estimates for the solution or some derivatives. We have chosen as an upper
bound $C(t)\|(v_0,v_1)\|$ with suitable norms. It is clear that we can get better estimates by using $C_0(t)\|v_0\|+C_1(t)\|v_1\|$.
The above estimates are sufficient to reach the goals of this paper.
\end{Rem}
%

\section{Treatment of corresponding semi-linear models}\label{sec:semilinear}

In this section we will use the decay estimates for~\eqref{eq:lin} which are obtained in Theorems
\ref{Thm:halflin}-\ref{Thm:Shilin}-\ref{Thm:parlin}-\ref{Thm:hyplin} to prove the corresponding Theorems
\ref{Thm:halfnl}-\ref{Thm:Shinl}-\ref{Thm:parnl}-\ref{Thm:hypnl}.
\\
Our main tools are Duhamel's principle and Gagliardo-Nirenberg inequality. Since we are dealing with semi-linear structural damped waves with constant
coefficients in the linear part the application of Duhamel's principle leads to the following:\\
{\it If we write the solution to~\eqref{eq:lin} with the fundamental solutions $G_0$ and $G_1$ in the form
\[ v (t,x) = G_0(t,x) \ast_{(x)} v_0(x) + G_1(t,x) \ast_{(x)} v_1(x), \]
then the solution to~\eqref{eq:diss} becomes
\[ u(t,x) = G_0(t,x) \ast_{(x)} u_0(x) + G_1(t,x) \ast_{(x)} u_1(x) + \int_0^t G_1(t-s,x) \ast_{(x)} f(u(s,x))\,ds. \] }
Let~$m\in(1,2]$ and let~$A$ be a space with norm~$\|\cdot\|_A$. Let us assume that the solution to~\eqref{eq:lin} satisfies some decay estimates in the
form
\begin{equation}\label{eq:abstractdecay}
\|\partial_x^\alpha u (t,\cdot) \|_{L^m} \lesssim f_{|\alpha|}(t) \|(u_0,u_1)\|_A, \ |\alpha|\leq k; \quad \| \partial_t u (t,\cdot) \|_{L^m} \lesssim g(t)
\|(u_0,u_1)\|_A.
\end{equation}
Here the decay functions~$f_{|\alpha|}(t)$ and~$g(t)$ depend, in general, on~$n$. Let us consider the space
\[ X(t)\doteq \mathcal{C}([0,t],H^{k,m})\cap\mathcal{C}^1([0,t],L^m)  \]
with the norm
\begin{align*}
\|w\|_{X(t)}
    & \doteq \sup_{0\leq \tau\leq t} \Bigl( \sum_{|\alpha|\leq k} f_{|\alpha|}(\tau)^{-1}\|\partial_x^\alpha w(\tau,\cdot)\|_{L^m}
    + g(\tau)^{-1}\|\partial_t w(\tau,\cdot)\|_{L^m} \bigr).
\intertext{For the sake of brevity, we also define}
\|w\|_{X_0(t)}
    & \doteq\sup_{0\leq \tau\leq t} \Big( f_{0}(\tau)^{-1}\|w(\tau,\cdot)\|_{L^m} + f_{k}(\tau)^{-1} \sum_{|\alpha|=k}
    \|\partial_x^\alpha w(\tau,\cdot)\|_{L^m} \Big),
\end{align*}
a norm on the space~$X_0(t) \doteq \mathcal{C}([0,t],H^{k,m})$. We remark that if~$w\in X(t)$, then~$\|w\|_{X(s)}\leq\|w\|_{X(t)}$ for any~$s\leq t$, and~$\|w\|_{X_0(t)}\leq\|w\|_{X(t)}$.
\\
We will prove that for any data~$(u_0,u_1)\in A$ the operator~$N$ which is defined for any~$u\in X(t)$ by
\begin{equation}\label{eq:Nw}
Nu(t,x) = G_0(t,x)\ast_{(x)} u_0(x) + G_1(t,x)\ast_{(x)} u_1(x) + \int_0^t G_1(t-s,x)\ast_{(x)} f(u(s,x))\,ds
\end{equation}
satisfies the estimates
\begin{align}
\label{eq:well}
\|Nu\|_{X(t)}
    & \leq C\,\|(u_0,u_1)\|_A + C\|u\|_{X_0(t)}^p, \\
\label{eq:contraction}
\|Nu-Nv\|_{X(t)}
    & \leq C\|u-v\|_{X_0(t)} \bigl(\|u\|_{X_0(t)}^{p-1}+\|v\|_{X_0(t)}^{p-1}\bigr),
\end{align}
uniformly with respect to~$t\in [0,\infty)$. To prove these estimates one should use~\eqref{eq:abstractdecay}.
\\
By standard arguments (see, for instance,~\cite{DALR}) from~\eqref{eq:well} it follows that~$N$ maps~$X(t)$ into itself for small data. Then estimates
\eqref{eq:well}-\eqref{eq:contraction} lead to the existence of a unique solution to $u=Nu$. In fact, taking the recurrence sequence $u_{-1} = 0,\  u_{j} =
N(u_{j-1})$ for  $j=0,1,2,\cdots $, we apply~\eqref{eq:well} with small $\|(u_0,u_1)\|_A=\epsilon$ and we see inductively that
\begin{equation}\label{eq:recurrence}
\|u_j\|_{X(t)}\le C_1\epsilon,
\end{equation}
where $C_1=2C$ for any $\epsilon \in[0,\epsilon_0]$ with~$\epsilon_0= \epsilon_0(C_1)$ sufficiently small.
\\
Once the uniform estimate~\eqref{eq:recurrence} is checked we use~\eqref{eq:contraction} once more and find
\begin{equation}\label{6.4.27}
\|u_{j+1}-u_{j}\|_{X(t)}\le C\epsilon^{p-1}, \,\,\|u_j-u_{j-1}\|_{X(t)} \leq 2^{-1}\|u_j-u_{j-1}\|_{X(t)}
\end{equation}
for $\epsilon\le\epsilon_0$ sufficiently small. {}From~\eqref{6.4.27} we get inductively $\|u_j-u_{j-1}\|_{X(t)}\le {C}{2^{-j}}$ so that $\{u_j \}$ is a
Cauchy sequence in the Banach space $X(t)$ converging to the unique solution of $Nu=u$. We denote such a solution by~$u$. Since all of the constants are
independent of $t$ we can take $t\to \infty$ and we gain a local and a global existence result simultaneously. Finally, we see that the definition of
$\|u\|_{X(t)}$ is chosen in an appropriate way to obtain the decay estimates~\eqref{eq:abstractdecay} for the solution to the semi-linear problem, too.
\\
Therefore, to prove our results we need only to establish \eqref{eq:well} and~\eqref{eq:contraction}.
\\
During the proof a special role will play different applications of Gagliardo-Nirenberg inequality to control~$L^m$ norms of the non-linear term~$f(u)$ for
$m\in[1,2]$ using~\eqref{eq:disscontr} to estimate $\|f(u)\|_{L^m}\lesssim \|u\|_{L^{mp}}^p$. In particular, in what follows, we will use the estimates
\begin{equation}
\label{eq:GN}
\|u(s,\cdot)\|_{L^{q}}^p \lesssim \|u(s,\cdot)\|_{L^m}^{p(1-\theta_k(q))}\,\|\nabla^k u(s,\cdot)\|_{L^m}^{p\theta_k(q)},
\end{equation}
where~$k=1,2$ and
\[ \theta_k(q) \doteq \frac{n}k\left(\frac1m-\frac1q\right), \qquad m\leq q\leq \frac{nm}{n-k\,m}.\]
In particular, these last inequalities give an interval for admissible $p\in [m, n/(n-k\,m)]$ for the exponent~$p$.
\begin{Rem}\label{Rem:u1L2}
Since we have in mind to use Gagliardo-Nirenberg inequality, as stated in \eqref{eq:GN}, it is clear that we can use linear estimates for~\eqref{eq:lin}
only if we make no assumption on the derivatives of~$v_1$. Such a problem does not appear for~$v_0$ which is not involved in the application of Duhamel's
principle.
\\
It is important to notice that in Theorem~\ref{Thm:Shilin} we obtained a decay estimate for the second derivatives of the solution~$v$ with respect to the
spatial variables by assuming additional regularity on~$v_0$, namely~$v_0\in L^1\cap H^2$ with no need of additional regularity for~$v_1$. Such an effect
does not appear in the case~$\sigma\in[0,1/2]$, that is, the assumption~$v_0\in L^1\cap H^k$ for~$k>1$ brings no benefit with respect to~$v_0\in L^1\cap
H^1$, unless we also assume additional regularity for~$v_1$.
\end{Rem}
\noindent We are now ready to prove our statements from Section~\ref{sec:intro}.
%
%
\begin{proof}[Proof of Theorem~\ref{Thm:halfnl}]
Here the space of the data is~$A=\mathcal{D}_m^1$, whereas
\[ X(t)\doteq \mathcal{C}([0,t],H^{1,m})\cap\mathcal{C}^1([0,t],L^m) \]
with the norm
\[ \|w\|_{X(t)} \doteq \sup_{0\leq \tau\leq t} \Bigl( (1+\tau)^{n\left(1-\frac1m\right)-1} \|w(\tau,\cdot)\|_{L^m} + (1+\tau)^{n\left(1-\frac1m\right)} \|\bigl(\nabla w(\tau,\cdot),\partial_tw(\tau,\cdot)\bigr)\|_{L^m} \Bigr). \]
We first prove~\eqref{eq:well}. We use two different strategies for~$s\in [0,t/2]$ and~$s\in [t/2,t]$ to control the integral term in~\eqref{eq:Nw}. In
particular, we use the $(L^1\cap L^m)-L^m$ estimates \eqref{eq:v}-\eqref{eq:gradv} if~$s\in [0,t/2]$ and we use the $L^m-L^m$ estimates \eqref{eq:vm}-\eqref{eq:gradvm} if~$s\in [t/2,t]$. Therefore we get
\begin{align}
\nonumber
\|Nu(t,\cdot)\|_{L^m}
    & \leq C (1+t)^{1-n(1-\frac1m)}\,\|(u_0,u_1)\|_{L^1\times L^m}\\
\nonumber
    & \quad + C\int_0^{t/2} (1+t-s)^{1-n(1-\frac1m)}\,\|f(u(s,\cdot))\|_{L^1\cap
L^m}ds \\
\label{eq:Nu}
    & \quad + C\int_{t/2}^t (1+t-s)\,\|f(u(s,\cdot))\|_{L^m}ds,\\
\nonumber
\|\bigl(\partial_t Nu(t,\cdot),\nabla Nu(t,\cdot)\bigr)\|_{L^m}
    & \leq C (1+t)^{-n(1-\frac1m)}\,\|(u_0,u_1)\|_{\mathcal{D}_1^m}\\
\nonumber
    & \quad + C\int_0^{t/2} (1+t-s)^{-n(1-\frac1m)}\,\|f(u(s,\cdot))\|_{L^1\cap L^m}ds \\
\label{eq:gradNu}
    & \quad + C\int_{t/2}^t \|f(u(s,\cdot))\|_{L^m}ds.
\end{align}
By using~\eqref{eq:disscontr} we can estimate~$|f(u)|\lesssim |u|^p$, so that
\[ \|f(u(s,\cdot))\|_{L^1\cap L^m}\lesssim \|u(s,\cdot)\|_{L^p}^p+ \|u(s,\cdot)\|_{L^{mp}}^p, \]
and, analogously,
\[ \|f(u(s,\cdot))\|_{L^m}\lesssim \|u(s,\cdot)\|_{L^{mp}}^p. \]
Since $p\in[m,n/(n-m)]$ in Theorem~\ref{Thm:halfnl} we can apply~\eqref{eq:GN} for $q=p$ and $q=mp$ and with~$k=1$. In this way we obtain
\begin{align}
\label{eq:fusGNlowp}
\|f(u(s,\cdot))\|_{L^1\cap L^m}
    & \lesssim \|u\|_{X_0(s)}^p (1+s)^{-p\big(n(1-1/m)-1+\theta_1(p)\big)}= \|u\|_{X_0(s)}^p (1+s)^{-p(n-1)+n}
\intertext{since~$\theta_1(p)<\theta_1(mp)$, whereas}
\label{eq:fusGNlow2p}
\|f(u(s,\cdot))\|_{L^m}
    & \lesssim \|u\|_{X_0(s)}^p (1+s)^{-p\big(n(1-1/m)-1+\theta_1(mp)\big)} = \|u\|_{X_0(s)}^p (1+s)^{-p(n-1)+n/m}.
\end{align}
Summarizing we find
\begin{align}
\nonumber
\|Nu(t,\cdot)\|_{L^m}
    & \leq C (1+t)^{1-n(1-\frac1m)}\,\|(u_0,u_1)\|_{L^1\times L^m}\\
\nonumber
    & \quad + C\,\|u\|_{X_0(t)}^p\,\int_0^{t/2} (1+t-s)^{1-n(1-\frac1m)}\,(1+s)^{-(p\,(n-1)-n)}ds \\
\label{eq:Nu2}
    & \quad + C\,\|u\|_{X_0(t)}^p\,\int_{t/2}^t (1+t-s)\,(1+s)^{-(p\,(n-1)-n/m)}ds,\\
\nonumber
\|\bigl(\partial_t Nu(t,\cdot),\nabla Nu(t,\cdot)\bigr)\|_{L^m}
    & \leq C (1+t)^{-n(1-\frac1m)}\,\|(u_0,u_1)\|_{\mathcal{D}_m^1}\\
\nonumber
    & \quad + C\,\|u\|_{X_0(t)}^p\,\int_0^{t/2} (1+t-s)^{-n(1-\frac1m)}\,(1+s)^{-(p\,(n-1)-n)}ds \\
\label{eq:gradNu2}
    & \quad + C\,\|u\|_{X_0(t)}^p\,\int_{t/2}^t (1+s)^{-(p\,(n-1)-n/m)}ds.
\end{align}
The key tool relies now in the estimate
\begin{equation}\label{eq:1ts}
(1+t-s)\approx (1+t), \quad \text{for any~$s\in [0,t/2]$ and} \quad (1+s)\approx (1+t) \quad \text{for any~$s\in [t/2,t]$.}
\end{equation}
Since~$p(n-1)-n >1$ thanks to~\eqref{eq:phalf} it holds
\begin{align*}
\int_0^{t/2} (1+t-s)^{\ell-n(1-\frac1m)}\,(1+s)^{-(p\,(n-1)-n)}ds
    & \approx (1+t)^{\ell-n(1-\frac1m)} \int_0^{t/2} (1+s)^{-(p\,(n-1)-n)}ds \\
    & \approx (1+t)^{\ell-n(1-\frac1m)}, \\
\int_{t/2}^t (1+t-s)^\ell\,(1+s)^{-(p\,(n-1)-n/m)}ds
    & \approx (1+t)^{-(p\,(n-1)-n/m)} \int_0^{t/2} (1+s)^\ell ds \\
    & \approx (1+t)^{-(p\,(n-1)-n/m)+\ell+1} \leq (1+t)^{\ell-n(1-\frac1m)}
\end{align*}
for~$l=0,1$, and this concludes the proof of~\eqref{eq:well}.
\\
Now we prove~\eqref{eq:contraction}. We remark that
\[ \|Nu-Nv\|_{X(t)} = \Big\| \int_0^t G_1(t-s,x)\ast_{(x)} (f(u(s,x))-f(v(s,x))) \,ds \Big\|_{X(t)}. \]
Using again~\eqref{eq:v} if~$s\in [0,t/2]$ and~\eqref{eq:vm} if~$s\in [t/2,t]$ we can estimate
\begin{multline*}
\| G_1(t,s,x) \ast_{(x)} (f(u(s,x))-f(v(s,x))) \|_{L^m} \\
    \lesssim \begin{cases}
(1+t-s)^{1-n(1-1/m)} \,\|f(u(s,\cdot))-f(v(s,\cdot)) \|_{L^1\cap L^m}, & s\in [0,t/2], \\
(1+t-s) \,\|f(u(s,\cdot))-f(v(s,\cdot)) \|_{L^m}, & s\in [t/2,t],
\end{cases}
\end{multline*}
whereas using~\eqref{eq:gradv} if~$s\in [0,t/2]$ and~\eqref{eq:gradvm} if~$s\in [t/2,t]$ we get
\begin{multline*}
\| (\partial_t,\nabla) G_1(t,s,x) \ast_{(x)} (f(u(s,x))-f(v(s,x))) \|_{L^m} \\
    \lesssim \begin{cases}
(1+t-s)^{-n(1-1/m)} \,\|f(u(s,\cdot))-f(v(s,\cdot)) \|_{L^1\cap L^m}, & s\in [0,t/2], \\
\|f(u(s,\cdot))-f(v(s,\cdot)) \|_{L^m}, & s\in [t/2,t].
\end{cases}
\end{multline*}
By using~\eqref{eq:disscontr} and H\"older's inequality we can now estimate
\begin{align*}
\|f(u(s,\cdot))-f(v(s,\cdot)) \|_{L^1}
    & \lesssim \|u(s,\cdot)-v(s,\cdot)\|_{L^p} \, \left(\|u(s,\cdot)\|_{L^p}^{p-1}+ \|v(s,\cdot)\|_{L^p}^{p-1}\right),\\
\|f(u(s,\cdot))-f(v(s,\cdot)) \|_{L^m}
    & \lesssim \|u(s,\cdot)-v(s,\cdot)\|_{L^{mp}} \, \left(\|u(s,\cdot)\|_{L^{mp}}^{p-1}+ \|v(s,\cdot)\|_{L^{mp}}^{p-1}\right).
\end{align*}
Analogously to the proof of~\eqref{eq:well} we apply Gagliardo-Nirenberg inequality to the terms
\[ \|u(s,\cdot)-v(s,\cdot)\|_{L^q}, \qquad \|u(s,\cdot)\|_{L^q}\,, \quad \|v(s,\cdot)\|_{L^q} \]
with~$q=p$ and~$q=mp$, and we conclude the proof of~\eqref{eq:contraction}.
\end{proof}
%
%
\begin{proof}[Proof of Theorem~\ref{Thm:Shinl}]
We follow the proof of Theorem~\ref{Thm:halfnl}. Having in mind  Theorem \ref{Thm:Shilin} we fix now the space~$A=\mathcal{D}_2^2$ for the data, and the
norm
\[ \|u\|_{X(t)} \doteq \sup_{0\leq \tau\leq t} \Bigl( \sum_{k=0}^2 (1+\tau)^{\frac{n-2+2k}4} \|\nabla^k u(\tau,\cdot)\|_{L^2} + (1+\tau)^{\frac{n}4} \|u_t(\tau,\cdot)\|_{L^2}\Bigr) \]
for the space
\[ X(t)=\left\{ u\in \mathcal{C}([0,t],H^2)\cap \mathcal{C}^1([0,t],L^2) \right\}. \]
More precisely, if~$n=2$ then due to Theorem \ref{Thm:Shilin} the coefficient of~$\|u(\tau,\cdot)\|_{L^2}$ is~$(\log(e+\tau))^{-1}$. Since this term brings no additional difficulties we will ignore it. The proof is completely analogous if one replaces such a coefficient in
the definition of~$\|u\|_{X(t)}$.
\\
We only prove~\eqref{eq:well}, being the proof of~\eqref{eq:contraction} analogous, as in the proof of Theorem~\ref{Thm:halfnl}. In order to estimate~$Nu$ we use only~\eqref{eq:uShi} for all~$s\in [0,t]$, that is,
\begin{equation}\label{eq:NuShi}
\|Nu(t,\cdot)\|_{L^2} \leq C (1+t)^{-\frac{n-2}4}\,\|(u_0,u_1)\|_{L^1\times L^2} + C \int_0^t (1+t-s)^{-\frac{n-2}4}\,\|f(u(s,\cdot))\|_{L^1\cap
L^2}ds.
\end{equation}
On the other hand, as in the proof of Theorem~\ref{Thm:halfnl} we use \eqref{eq:vtShi}-\eqref{eq:vxShi}-\eqref{eq:vxxShi} if~$s\in [0,t/2]$ and \eqref{eq:vtShi2}-\eqref{eq:vxShi2}-\eqref{eq:vxxShi2} if~$s\in [t/2,t]$, for estimating the other terms, that is,
we arrive for $(j,|\alpha|) \in \{(1,0),(0,1),(0,2)\}$ at \begin{align}
\| \partial_t^j\partial_x^\alpha Nu(t,\cdot)\|_{L^2}
    & \leq C (1+t)^{-\frac{n+2(j+|\alpha|-1)}4}\,\|(u_0,u_1)\|_{\mathcal{D}_2^{|\alpha|}}\\
\nonumber
    & \quad + C\int_0^{t/2} (1+t-s)^{-\frac{n+2(j+|\alpha|-1)}4}\,\|f(u(s,\cdot))\|_{L^1\cap L^2}ds \\
\label{eq:NudShi}
    & \quad + C\int_{t/2}^t (1+t-s)^{-\frac{j+|\alpha|-1}2}\,\|f(u(s,\cdot))\|_{L^2}ds.
\end{align}
Since~$2\leq p\leq n/(n-4)$ in Theorem~\ref{Thm:Shinl} we can apply~\eqref{eq:GN} with~$m=2$ and~$k=2$. Computing
\[ \theta_2(p)=\frac{n}2\left(\frac12-\frac1p\right), \qquad \theta_2(2p)=\frac{n}2\left(\frac12-\frac1{2p}\right)=\frac{n}4\left(1-\frac1p\right),\]
we conclude
\begin{align}
\label{eq:fusGNlowp2}
\|f(u(s,\cdot))\|_{L^1\cap L^2}
    & \lesssim \|u\|_{X_0(s)}^p (1+s)^{-p(\frac{n-2}4+\theta_2(p))}= \|u\|_{X_0(s)}^p (1+s)^{-\frac{p(n-1)-n}2}
\intertext{since~$\theta_2(p)<\theta_2(2p)$, whereas}
\label{eq:fusGNlow2p2}
\|f(u(s,\cdot))\|_{L^2}
    & \lesssim \|u\|_{X_0(s)}^p (1+s)^{-p(\frac{n-2}4+\theta_2(2p))} = \|u\|_{X_0(s)}^p (1+s)^{-\frac{p(n-1)-n/2}2}.
\end{align}
Using again~\eqref{eq:1ts} we can now write
\begin{align*}
\|Nu(t,\cdot)\|_{L^2}
    & \leq C (1+t)^{-\frac{n-2}4}\,\|(u_0,u_1)\|_{L^1\times L^2} \\
    & \quad + C\,\|u\|_{X_0(t)}^p \, (1+t)^{-\frac{n-2}4}\int_0^{t/2} (1+s)^{-\frac{p(n-1)-n}2}ds \\
    & \quad + C\,\|u\|_{X_0(t)}^p \, (1+t)^{-\frac{p(n-1)-n}2} \int_{t/2}^t (1+t-s)^{-\frac{n-2}4}ds,\\
\|\partial_t^j\partial_x^\alpha Nu(t,\cdot)\|_{L^2}
    & \leq C (1+t)^{-\frac{n-2+2j+2|\alpha|}4}\|(u_0,u_1)\|_{\mathcal{D}_2^{|\alpha|}} \\
    & \quad + C\,\|u\|_{X_0(t)}^p (1+t)^{-\frac{n+2(j+|\alpha|-1)}4} \int_0^{t/2} (1+s)^{-\frac{p(n-1)-n}2} \, ds \\
    & \quad + C\,\|u\|_{X_0(t)}^p (1+t)^{-\frac{p(n-1)-n/2}2} \int_{t/2}^t (1+t-s)^{-\frac{j+|\alpha|-1}2}\, ds
\end{align*}
for~$(j,|\alpha|)=(1,0)$ and~$j=0$, $|\alpha|=1,2$. Due to~\eqref{eq:pShi} the term~$(1+s)^{-\frac{p(n-1)-n}2}$ is integrable. Moreover, $(j+|\alpha|-1)/2<1$, that is, we can also estimate
\[ (1+t)^{-\frac{p(n-1)-n/2}2} \int_{t/2}^t (1+t-s)^{-\frac{j+|\alpha|-1}2}\, ds \approx (1+t)^{-\frac{p(n-1)-n/2}2+1-\frac{j+|\alpha|-1}2} \leq (1+t)^{-\frac{n+2(j+|\alpha|-1)}4} \]
using again~\eqref{eq:pShi}. Nevertheless, a new difficulty arises to estimate the integral term~$\int_{t/2}^t\ldots$ in~$Nu$ since we used here the~$(L^1\cap L^2)-L^2$ estimate, due to the lack of a \emph{suitable} $L^2-L^2$ estimate (see Remark~\ref{Rem:NR22}). If~$n\leq6$, then this difficulty is easily solved, since~$(1+t-s)^{-\frac{n-2}4}$ is not integrable over~$[t/2,t]$, being~$(n-2)/4\leq1$, therefore
\[ (1+t)^{-\frac{p(n-1)-n}2} \int_{t/2}^t (1+t-s)^{-\frac{n-2}4}ds \approx \begin{cases}
(1+t)^{-\frac{p(n-1)-n}2+1-\frac{n-2}4} & \text{if~$n\leq 5$,}\\
(1+t)^{-\frac{p(n-1)-n}2}\log (1+t) & \text{if~$n=6$.}
\end{cases} \]
In both cases the decay is controlled by~$(1+t)^{-\frac{n-2}4}$, due to~\eqref{eq:pShi}. Now let~$n\geq7$, that is, $(1+t-s)^{-\frac{n-2}4}$ is integrable over~$[t/2,t]$. We recall that we used~\eqref{eq:GN}, hence we already assumed~$p\geq2$. Therefore, we can estimate
\[ (1+t)^{-\frac{p(n-1)-n}2} \int_{t/2}^t (1+t-s)^{-\frac{n-2}4}ds \lesssim (1+t)^{-\frac{p(n-1)-n}2} \leq (1+t)^{-\frac{2(n-1)-n}2} \leq (1+t)^{-\frac{n-2}4}. \]
This concludes the proof.
\end{proof}


\begin{proof}[Proof of Theorem~\ref{Thm:parnl}]
We follow the proof of Theorem~\ref{Thm:halfnl}. Having in mind  Theorem \ref{Thm:parlin} we fix now the space~$A=\mathcal{D}_2^1$ for the data, and the
norm
\[ \|u\|_{X(t)} \doteq \sup_{0\leq \tau\leq t} \Bigl( (1+\tau)^{(\frac{n}4-\sigma)\frac1{1-\sigma}} \|u(\tau,\cdot)\|_{L^2}
+ (1+\tau)^{(\frac{n+2}4-\sigma)\frac1{1-\sigma}} \|\nabla u(\tau,\cdot)\|_{L^2} + (1+\tau) \|u_t(\tau,\cdot)\|_{L^2}\Bigr)\]
on the space
\[ X(t)=\left\{ u\in \mathcal{C}([0,t],H^1)\cap \mathcal{C}^1([0,t],L^2) \right\}. \]
We only prove~\eqref{eq:well}, being the proof of~\eqref{eq:contraction} analogous as in the proof of Theorem~\ref{Thm:halfnl}. Using
\eqref{eq:vpar}-\eqref{eq:vtpar} in~$[0,t]$, and~\eqref{eq:vxpar} if~$s\in [0,t/2]$ and~\eqref{eq:vxpar2} if~$s\in [t/2,t]$, we get
\begin{align}
\nonumber
\| \partial_t^j Nu(t,\cdot)\|_{L^2}
    & \leq C (1+t)^{-(\frac{n}4-\sigma)\frac1{1-\sigma}-j}\,\|(u_0,u_1)\|_{\mathcal{D}_2^j}\\
\label{eq:Nupar}
    & \quad + C\int_0^t (1+t-s)^{-(\frac{n}4-\sigma)\frac1{1-\sigma}-j}\,\|f(u(s,\cdot))\|_{L^1\cap L^2}ds \\
\nonumber
\| \nabla Nu(t,\cdot)\|_{L^2}
    & \leq C (1+t)^{-(\frac{n+2}4-\sigma)\frac1{1-\sigma}}\,\|(u_0,u_1)\|_{\mathcal{D}_2^1}\\
\nonumber
    & \quad + C\int_0^{t/2} (1+t-s)^{-(\frac{n+2}4-\sigma)\frac1{1-\sigma}}\,\|f(u(s,\cdot))\|_{L^1\cap L^2}ds \\
\label{eq:Nuxpar}
    & \quad + C\int_{t/2}^t (1+t-s)^{-(\frac12-\sigma)\frac1{1-\sigma}}\,\|f(u(s,\cdot))\|_{L^2}ds.
\end{align}
As in the proof of Theorem~\ref{Thm:Shinl} some difficulties arise to estimate the integral terms~$\int_{t/2}^t\ldots$ in~$Nu$ and~$\partial_t Nu$ since we
used here the~$(L^1\cap L^2)-L^2$ estimate, due to the lack of a \emph{suitable} $L^2-L^2$ estimate. In particular, this difficulty brings a loss in decay
in the estimates for~$u_t$ with respect to the linear estimate~\eqref{eq:vtpar}.
\\
Since~$2\leq p\leq n/(n-2)$ in Theorem~\ref{Thm:parnl} we can apply~\eqref{eq:GN} with~$m=2$ and~$k=1$. In this way we get
\begin{align*}
\|f(u(s,\cdot))\|_{L^1\cap L^2}
    & \lesssim \|u\|_{X_0(s)}^p (1+s)^{-p((\frac{n+2\theta_1(p)}4-\sigma)\frac1{1-\sigma})}= \|u\|_{X_0(s)}^p (1+s)^{-(p(\frac{n}2-\sigma)-\frac{n}2)\frac1{1-\sigma}}
\intertext{since~$\theta_1(p)<\theta_1(2p)$, whereas}
\|f(u(s,\cdot))\|_{L^2}
    & \lesssim \|u\|_{X_0(s)}^p (1+s)^{-p((\frac{n+2\theta_1(2p)}4-\sigma)\frac1{1-\sigma})} = \|u\|_{X_0(s)}^p (1+s)^{-(p(\frac{n}2-\sigma)-\frac{n}4)\frac1{1-\sigma}}.
\end{align*}
Using again~\eqref{eq:1ts} we can, finally, estimate
\begin{align*}
\|\partial_t^j Nu(t,\cdot)\|_{L^2}
    & \leq C (1+t)^{-(\frac{n}4-\sigma)\frac1{1-\sigma}-j}\|(u_0,u_1)\|_{\mathcal{D}_2^j} \\
    & \quad + C\,\|u\|_{X_0(t)}^p (1+t)^{-(\frac{n}4-\sigma)\frac1{1-\sigma}-j} \int_0^{t/2} (1+s)^{-(p(\frac{n}2-\sigma)-\frac{n}2)\frac1{1-\sigma}} \, ds \\
    & \quad + C\,\|u\|_{X_0(t)}^p (1+t)^{-(p(\frac{n}2-\sigma)-\frac{n}2)\frac1{1-\sigma}} \int_{t/2}^t (1+t-s)^{-(\frac{n}4-\sigma)\frac1{1-\sigma}-j} \, ds,\\
\|\nabla Nu(t,\cdot)\|_{L^2}
    & \leq C (1+t)^{-(\frac{n+2}4-\sigma)\frac1{1-\sigma}}\|(u_0,u_1)\|_{\mathcal{D}_2^1} \\
    & \quad + C\,\|u\|_{X_0(t)}^p (1+t)^{-(\frac{n+2}4-\sigma)\frac1{1-\sigma}} \int_0^{t/2} (1+s)^{-(p(\frac{n}2-\sigma)-\frac{n}2)\frac1{1-\sigma}} \, ds \\
    & \quad + C\,\|u\|_{X_0(t)}^p (1+t)^{-(p(\frac{n}2-\sigma)-\frac{n}4)\frac1{1-\sigma}} \int_{t/2}^t (1+t-s)^{-(\frac12-\sigma)\frac1{1-\sigma}}\, ds.
\end{align*}
Due to~\eqref{eq:ppar} the term~$(1+s)^{-(p(\frac{n}2-\sigma)-\frac{n}2)\frac1{1-\sigma}}$ is integrable. Moreover, $(\frac12-\sigma)\frac1{1-\sigma}<1$,
that is, we can also estimate
\begin{multline*}
(1+t)^{-(p(\frac{n}2-\sigma)-\frac{n}4)\frac1{1-\sigma}} \int_{t/2}^t (1+t-s)^{-(\frac12-\sigma)\frac1{1-\sigma}}\, ds \\
    \approx (1+t)^{-(p(\frac{n}2-\sigma)-\frac{n}4)\frac1{1-\sigma}+1-(\frac12-\sigma)\frac1{1-\sigma}} \leq (1+t)^{-(\frac{n+2}4-\sigma)\frac1{1-\sigma}}
\end{multline*}
using again~\eqref{eq:ppar}. On the other hand, since~$n\in[2,4]$ and~$\sigma\in(0,1/2)$ it holds
\[ 0 <\bigl(\frac{n}4-\sigma\bigr)\frac1{1-\sigma}\leq 1. \]
Therefore we may estimate
\begin{align*}
\int_{t/2}^t (1+t-s)^{-(\frac{n}4-\sigma)\frac1{1-\sigma}} & \lesssim (1+t)^{1-(\frac{n}4-\sigma)\frac1{1-\sigma}}\,\log(e+t),\\
\int_{t/2}^t (1+t-s)^{-(\frac{n}4-\sigma)\frac1{1-\sigma}-1} & \leq C.
\end{align*}
We remark that the term~$\log (e+t)$ only appears in the above estimate if~$n=4$. Using again~\eqref{eq:ppar} the proof of~\eqref{eq:upar}
and~\eqref{eq:utpar} immediately follows. This concludes the proof.
\end{proof}


\begin{proof}[Proof of Theorem~\ref{Thm:hypnl}]
We follow the proof of Theorem~\ref{Thm:parnl}. But now we propose some modifications to work with fractional derivatives of the solution of order~$2\sigma$. In particular, we use a Gagliardo-Nirenberg type inequality basing on fractional Sobolev spaces~\eqref{eq:GNfrac}. Let us consider the solution space
\[ X(t)=\mathcal{C}^1([0,t],H^{2\sigma})\cap\mathcal{C}([0,t],L^2) \]
with the norm
\[
\|u\|_{X(t)} \doteq \sup_{0\leq \tau\leq t} \Bigl( (1+\tau)^{\frac{n-2}{4\sigma}} \|u(\tau,\cdot)\|_{L^2} + (1+\tau)^{\frac{n}{4\sigma}} \|(u_t,\nabla u)(\tau,\cdot)\|_{L^2} + (1+\tau)^{\frac{n-2}{4\sigma}+1} \|u(\tau,\cdot)\|_{\dot{H}^{2\sigma}}\Bigr).
\]
More precisely, if~$n=2$ then due to Theorem \ref{Thm:hyplin} the
coefficient of~$\|u(\tau,\cdot)\|_{L^2}$ is~$(\log(e+\tau))^{-1}$.
Since this term brings no additional difficulties we will ignore it.
\\
Again we only prove~\eqref{eq:well}. As in the proof of
Theorems~\ref{Thm:Shinl} and~\ref{Thm:parnl} we only use the
$(L^1\cap L^2)-L^2$ estimate~\eqref{eq:vhyp} for~$Nu$, whereas we
use \eqref{eq:vthyp}-\eqref{eq:vxhyp}-\eqref{eq:vxxhyp} if~$s\in
[0,t/2]$ and \eqref{eq:vthyp2}-\eqref{eq:vxhyp2}-\eqref{eq:vxxhyp2}
if~$s\in [t/2,t]$ for estimating suitable derivatives of $Nu$. We
get
\begin{align*}
\| Nu(t,\cdot)\|_{L^2}
    & \leq C (1+t)^{-\frac{n-2}{4\sigma}}\,\|(u_0,u_1)\|_{L^1\cap L^2}\\
    & \quad + C\int_0^t (1+t-s)^{-\frac{n-2}{4\sigma}}\,\|f(u(s,\cdot))\|_{L^1\cap L^2}ds,\\
\| \partial_t^j Nu(t,\cdot)\|_{\dot{H}^\kappa}
    & \leq C (1+t)^{-\frac{n+2(j+\kappa-1)}{4\sigma}}\,\|(u_0,u_1)\|_{\mathcal{D}_2^{2j(1-\sigma)+\kappa}}\\
    & \quad + C\int_0^{t/2} (1+t-s)^{-\frac{n+2(j+\kappa-1)}{4\sigma}}\,\|f(u(s,\cdot))\|_{L^1\cap L^2}ds \\
    & \quad + C\int_{t/2}^t \|f(u(s,\cdot))\|_{L^2}ds,
\end{align*}
where either~$(j,\kappa)=(1,0)$, or~$j=0$ and~$\kappa=1, 2\sigma$. For~$m=1,2$ we will use the following fractional Gagliardo-Nirenberg type inequality:
\begin{equation}
\label{eq:GNfrac}
\|u(s,\cdot)\|_{L^{mp}}^p \lesssim \|u(s,\cdot)\|_{L^2}^{p(1-\theta_\kappa(mp))}\,\|u(s,\cdot)\|_{\dot{H}^\kappa}^{p\theta_\kappa(mp)},
\end{equation}
where~$\kappa\in(0,n/2)$ is a real number, and
\[ \theta_\kappa(q) \doteq \frac{n}\kappa\left(\frac12-\frac1q\right), \qquad m\leq q\leq \frac{nm}{n-\kappa\,m}.\]
In particular, these last inequalities for~$m=1,2$ give an interval for admissible $p\in [2, n/(n-2\,\kappa)]$ for the exponent~$p$. We shall distinguish three cases.
\\
Firstly, let~$n\geq4$, or~$n=3$ and~$\sigma\in(1/2,3/4)$. We set~$\kappa=2\sigma$. Indeed, in such a case, $2\sigma<n/2$, hence, the application of~\eqref{eq:GNfrac} gives
\begin{align*}
\|f(u(s,\cdot))\|_{L^1\cap L^2}
    & \lesssim \|u\|_{X_0(s)}^p (1+s)^{-p\,(\frac{n-2}{4\sigma}+\theta_{2\sigma}(p))} = \|u\|_{X_0(s)}^p (1+s)^{\frac{-p\,(n-1)+n}{2\sigma}}
\intertext{since~$\theta_{2\sigma}(p)<\theta_{2\sigma}(2p)$, whereas}
\|f(u(s,\cdot))\|_{L^2}
    & \lesssim \|u\|_{X_0(s)}^p (1+s)^{-p\,\frac{n-2+2\theta_{2\sigma}(2p)}{4\sigma}} = \|u\|_{X_0(s)}^p (1+s)^{\frac{-2p\,(n-1)+n}{4\sigma}}.
\end{align*}
Here we put~$H^0=\dot{H}^0=L^2$. Using again~\eqref{eq:1ts} we can now conclude
\begin{align*}
\| Nu(t,\cdot)\|_{L^2}
    & \leq C (1+t)^{-\frac{n-2}{4\sigma}}\,\|(u_0,u_1)\|_{L^1\cap L^2}\\
    & \quad + C\,\|u\|_{X_0(t)}^p \, (1+t)^{-\frac{n-2}{4\sigma}}\,\int_0^{t/2} (1+s)^{\frac{-p\,(n-1)+n}{2\sigma}} ds,\\
    & \quad + C\,\|u\|_{X_0(t)}^p \, (1+t)^{\frac{-p\,(n-1)+n}{2\sigma}} \,\int_{t/2}^t (1+t-s)^{-\frac{n-2}{4\sigma}}\,ds\,,\\
\| \partial_t^j Nu(t,\cdot)\|_{\dot{H}^\kappa}
    & \leq C (1+t)^{-\frac{n}{4\sigma}}\,\|(u_0,u_1)\|_{\mathcal{D}_2^{2j(1-\sigma)+\kappa}}\\
    & \quad + C\,\|u\|_{X_0(t)}^p \,(1+t)^{-\frac{n}{4\sigma}}\,\int_0^{t/2} (1+s)^{\frac{-p\,(n-1)+n}{2\sigma}} ds \\
    & \quad + C\,\|u\|_{X_0(t)}^p \,(1+t)^{\frac{-2p\,(n-1)+n}{4\sigma}}\,\int_{t/2}^t 1 ds.
\end{align*}
As usual, we use~\eqref{eq:phyp} to control all the rates of decay.
In particular, since~$p(n-1)>n+2\sigma$, it follows that
\[ (1+t)^{\frac{-2p\,(n-1)+n}{4\sigma}}\,\int_{t/2}^t 1 ds \approx (1+t)^{\frac{-2p\,(n-1)+n}{4\sigma}+1} \leq (1+t)^{-\frac{n}{4\sigma}}. \]
As in the proof of Theorem~\ref{Thm:Shinl} we have to pay attention
to the term~$\int_{t/2}^t\ldots$ in~$Nu$, since we used
the~$(L^1\cap L^2)-L^2$ estimate due to the lack of a
\emph{suitable} $L^2-L^2$ estimate (see Remark~\ref{Rem:NR22}).
If~$n\leq 2 +4\sigma$, then this difficulty is easily solved
because~$(1+t-s)^{-\frac{n-2}{4\sigma}}$ is not integrable
over~$[t/2,t]$. Therefore,
\[ (1+t)^{\frac{-p\,(n-1)+n}{2\sigma}} \,\int_{t/2}^t (1+t-s)^{-\frac{n-2}{4\sigma}}\,ds \approx \begin{cases}
(1+t)^{\frac{-p\,(n-1)+n}{2\sigma}+1-\frac{n-2}{4\sigma}} & \text{if~$n\leq 2 +4\sigma$, and~$(n,\sigma)\neq(5,3/4)$,}\\
(1+t)^{\frac{-p\,(n-1)+n}{2\sigma}}\log (1+t) & \text{if~$n=5$ and~$\sigma=3/4$.}
\end{cases} \]
In both cases the decay is controlled by~$(1+t)^{-\frac{n-2}{4\sigma}}$, due to~\eqref{eq:phyp}. Now let~$n>2+4\sigma$, this implies that $(1+t-s)^{-\frac{n-2}{4\sigma}}$ is integrable over~$[t/2,t]$. We recall that we used~\eqref{eq:GNfrac}, hence, we already assumed~$p\geq2$. For this reason we can estimate
\[ (1+t)^{\frac{-p\,(n-1)+n}{2\sigma}} \,\int_{t/2}^t (1+t-s)^{-\frac{n-2}{4\sigma}}\,ds \lesssim (1+t)^{\frac{-p\,(n-1)+n}{2\sigma}} \leq (1+t)^{-\frac{2(n-1)-n}{2\sigma}} \leq (1+t)^{-\frac{n-2}{2\sigma}}. \]
Now we come back to the other two cases. If~$n=2$, then it is sufficient to apply classical Gagliardo-Nirenberg inequality~\eqref{eq:GN} for~$k=1$. In this case we use in the right-hand side of~\eqref{eq:well} and~\eqref{eq:contraction} the space~$X_0(t)=\mathcal{C}([0,t],H^1)$ with the norm
\[
\|u\|_{X_0(t)} \doteq \sup_{0\leq \tau\leq t} \Bigl( (1+\tau)^{\frac{n-2}{4\sigma}} \|u(\tau,\cdot)\|_{L^2} + (1+\tau)^{\frac{n}{4\sigma}} \|\nabla u(\tau,\cdot)\|_{L^2} \Bigr).
\]
Under this choice we obtain the range of admissible $p\in [2,\infty)$ and the rest of the proof is analogous. It remains to consider the case~$n=3$ and~$\sigma\in[3/4,1)$. For any~$p\in[2,\infty)$ there exists~$\kappa\in(1,3/2)$ such that~$p<3/(3-2\kappa)$. This allows to apply~\eqref{eq:GNfrac} for $m=2$. In this case we use in the right-hand side of~\eqref{eq:well} and~\eqref{eq:contraction} the space~$X_0(t)=\mathcal{C}([0,t],H^\kappa)$ with the norm
\[
\|u\|_{X_0(t)} \doteq \sup_{0\leq \tau\leq t} \Bigl( (1+\tau)^{\frac{n-2}{4\sigma}} \|u(\tau,\cdot)\|_{L^2} + (1+\tau)^{\frac{n+2(\kappa-1)}{4\sigma}} \|u(\tau,\cdot)\|_{\dot{H}^{\kappa}}\Bigr).
\]
Indeed, it is clear that one can obtain an estimate for the~$\dot{H}^\kappa$ norm of the solution in Theorem~\ref{Thm:hypnl}.
\\
This concludes the proof.
\end{proof}

%

\section{Application of test function method}\label{sec:test}

One may ask if the exponents given by
\eqref{eq:phalf}-\eqref{eq:pShi}-\eqref{eq:ppar}-\eqref{eq:phyp} are
really \emph{critical}, that is, this condition is necessary for the
global existence of small data solutions. The answer is positive in
the case $\sigma\in(0,1/2]$, we interpret those models as
\emph{parabolic}-type models. The proof bases on the application of
the test function method~\cite{Z}.
\\
On the other hand, in the case $\sigma\in(1/2,1]$, we interpret those models as \emph{hyperbolic}-type models, the application of the test function method only proves that there exists in general no global in time solution if~$1<p\leq 1+2/(n-1)$, whereas the bound for the exponent given by~\eqref{eq:pShi} and~\eqref{eq:phyp} is~$p>1+(1+2\sigma)/(n-1)$.
\\
Nevertheless, we remark that this effect also appears if one
considers the heat equation~$u_t-\triangle u=|u|^p$ (which is a
\emph{parabolic} equation) and the wave equation~$u_{tt}-\triangle
u=|u|^p$ (which is a \emph{hyperbolic} equation). Indeed, for the
heat equation one has global existence of small data solutions
for~$p>1+2/n$ and there exists no global solution for a suitable
choice of the data for~$1<p\leq 1+2/n$, whereas for the wave
equation there exists a gap between the exponent obtained with the
test function method, i.e.~$1<p\leq 1+2/(n-1)$ and the exponent for
which one can prove global existence of small data solutions, that
is, $p>\gamma(n)$, where~$\gamma(n)$ is the positive root of the
equation $n(p-1)/2=(p+1)/p$.
\\
On the other hand, we recall that~$1+2/(n-1)$ is the lowest power
for the global existence of small amplitude solutions to the wave
equation with a nonlinear term $f(\partial u, \partial^2 u)$
(see~\cite{strauss}).
\begin{Thm}\label{Thm:blow}
We consider the Cauchy problem
\begin{equation}
\label{eq:blow}
\begin{cases}
u_{tt}-\Delta u+\mu (-\Delta)^{\sigma} u_t=|u|^p, & t\geq0, \ x\in\R^n,\\
u(0,x)=u_0(x), \\
u_t(0,x)=u_1(x),
\end{cases}
\end{equation}
for~$\sigma\in(0,1]$. Let us assume that the data~$u_0,u_1\in\mathcal{C}_0^\infty(\R^n)$ satisfy
\begin{equation}\label{eq:blowdata}
\int_{\Rn} \big(u_1(x) + \mu (-\triangle)^\sigma u_0(x) \big) dx >0.
\end{equation}
If
\begin{equation}\label{eq:pblow}
1<p\leq \begin{cases}
1+ \frac2{n-2\sigma} & \text{if~$\sigma\in(0,1/2]$,}\\
1+ \frac2{n-1} & \text{if~$\sigma\in[1/2,1]$,}
\end{cases}
\end{equation}
then there exists no global, sufficiently regular, non-negative solution to~\eqref{eq:blow}.
\end{Thm}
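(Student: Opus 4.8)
The plan is to argue by contradiction via the test function method of \cite{Z}, adapted to the nonlocal damping $\mu(-\Delta)^\sigma$. Assume a global, sufficiently regular, non-negative solution $u$ to \eqref{eq:blow} exists. First I would derive a weak identity by multiplying the equation by a non-negative test function $\phi=\phi(t,x)$ and integrating over $[0,\infty)\times\Rn$: I integrate by parts twice in $t$ in the term $u_{tt}$, once in $x$ in $-\Delta u$, and I use the self-adjointness of $(-\Delta)^\sigma$ together with one integration by parts in $t$ in the damping term. Imposing $\phi_t(0,\cdot)=0$ and $\phi(0,\cdot)\geq0$, all boundary contributions collapse to the single term $\int_{\Rn}\bigl(u_1+\mu(-\Delta)^\sigma u_0\bigr)\phi(0,\cdot)\,dx$, which is precisely the quantity assumed positive in \eqref{eq:blowdata}; the operator transferred onto $\phi$ is its formal adjoint $L^{*}\phi\doteq\phi_{tt}-\Delta\phi-\mu(-\Delta)^\sigma\phi_t$, with the sign flip on the first-order-in-time damping. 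Since $u\geq0$ this yields the fundamental inequality
\[ \int_0^\infty\!\!\int_{\Rn} u^p\,\phi\,dx\,dt + \int_{\Rn}\bigl(u_1+\mu(-\Delta)^\sigma u_0\bigr)\phi(0,\cdot)\,dx \;\leq\; \int_0^\infty\!\!\int_{\Rn} u\,\abs{L^{*}\phi}\,dx\,dt. \]

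Next I would insert a scaled test function $\phi_R(t,x)=\eta(t/T_R)\,\Phi(x/R)$, with $\eta,\Phi\geq0$ identically $1$ near the origin and cut off at unit scale, choosing the anisotropic time scale $T_R$ to balance the two relevant contributions to $L^{*}\phi_R$. Because $(-\Delta)^\sigma$ satisfies the exact scaling $(-\Delta)^\sigma\bigl[\Phi(\cdot/R)\bigr]=R^{-2\sigma}\bigl[(-\Delta)^\sigma\Phi\bigr](\cdot/R)$, the damping term scales like $T_R^{-1}R^{-2\sigma}$: taking $T_R=R^{2(1-\sigma)}$ in the parabolic range $\sigma\in(0,1/2]$ makes both $\Delta\phi_R$ and $(-\Delta)^\sigma\partial_t\phi_R$ of size $R^{-2}$ (with $\partial_t^2\phi_R\sim R^{-4(1-\sigma)}$ then negligible), while taking $T_R=R$ in the hyperbolic range $\sigma\in[1/2,1]$ makes $\partial_t^2\phi_R$ and $\Delta\phi_R$ dominant of size $R^{-2}$ (the damping being lower order). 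Writing $u\,\abs{L^{*}\phi_R}=\bigl(u\,\phi_R^{1/p}\bigr)\bigl(\phi_R^{-1/p}\abs{L^{*}\phi_R}\bigr)$ and applying H\"older's inequality with exponents $p$ and $p'=p/(p-1)$ gives
\[ I_R+\epsilon_0\;\leq\;I_R^{1/p}\,J_R^{1/p'},\qquad I_R\doteq\int_0^\infty\!\!\int_{\Rn} u^p\phi_R\,dx\,dt,\quad J_R\doteq\int_0^\infty\!\!\int_{\Rn}\phi_R^{1-p'}\abs{L^{*}\phi_R}^{p'}\,dx\,dt, \]
where $\epsilon_0>0$ bounds the data term from below once $R$ is large enough that $\phi_R(0,\cdot)\equiv1$ on $\supp u_0\cup\supp u_1$.

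The decisive computation is the scaling of $J_R$. The change of variables $x=Ry$, $t=T_R\tau$ contributes a volume factor $R^nT_R$ and a factor $R^{-2p'}$ from $\abs{L^{*}\phi_R}^{p'}$, so that $J_R\lesssim R^{\gamma}$ with $\gamma=n+2(1-\sigma)-2p'$ in the parabolic case and $\gamma=n+1-2p'$ in the hyperbolic case, provided the resulting dimensionless integral is finite. Solving $\gamma=0$ gives exactly $p=1+2/(n-2\sigma)$ and $p=1+2/(n-1)$, respectively, matching the two alternatives in \eqref{eq:pblow}. For strictly subcritical $p$ (so $\gamma<0$) I would bound the right-hand side by Young's inequality, $I_R^{1/p}J_R^{1/p'}\leq\tfrac1p I_R+\tfrac1{p'}J_R$, absorb $\tfrac1p I_R$ on the left, and obtain $\epsilon_0\lesssim R^{\gamma}\to0$ as $R\to\infty$, contradicting $\epsilon_0>0$. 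At the critical exponent ($\gamma=0$) the same step only produces the uniform bound $I_R\lesssim1$, hence $\int_0^\infty\!\int_{\Rn}u^p\,dx\,dt<\infty$; I would then sharpen the H\"older step by integrating only over the region where $L^{*}\phi_R$ is genuinely active and use absolute continuity of the finite integral of $u^p$ to let this contribution vanish as $R\to\infty$, once more contradicting $\epsilon_0>0$.

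The step I expect to be the main obstacle is the finiteness and the scaling of $J_R$ stemming from the nonlocal term $(-\Delta)^\sigma\partial_t\phi_R$. Since $(-\Delta)^\sigma$ destroys compact support, the profile $(-\Delta)^\sigma\Phi$ decays only polynomially, typically like $\abs{(-\Delta)^\sigma\Phi(x)}\lesssim\langle x\rangle^{-n-2\sigma}$, so $\Phi$ cannot be taken compactly supported: one must tune its decay so that $\Phi^{1-p'}\abs{(-\Delta)^\sigma\Phi}^{p'}$ is integrable over $\Rn$ uniformly under scaling, and this is where the genuine work lies. The nonlocality also complicates the critical case, because the contribution of the fractional term to $\abs{L^{*}\phi_R}$ is not localized on the transition annulus but spread over all of $\Rn$; consequently the vanishing-tail argument must additionally exploit the polynomial decay of $(-\Delta)^\sigma\Phi$ away from the support of $\nabla\Phi$. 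The clean separation into the parabolic scale $T_R=R^{2(1-\sigma)}$ and the hyperbolic scale $T_R=R$ is exactly what produces the two distinct thresholds in \eqref{eq:pblow}, the hyperbolic one being only an upper bound for $\sigma\in(1/2,1]$, as already noted in the text preceding the statement.
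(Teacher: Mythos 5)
Your skeleton — weak formulation, anisotropic scaling $T_R=R^{2(1-\sigma)}$ versus $T_R=R$, H\"older/Young absorption, and the absolute-continuity refinement at the critical exponent — matches the paper's strategy, and your sign bookkeeping for the boundary term reproducing \eqref{eq:blowdata} is correct. However, at the one step you yourself flag as ``where the genuine work lies,'' namely the nonlocal term $\mu(-\triangle)^\sigma\partial_t\phi_R$, your proposal stops at identifying the obstruction and gestures at a device (a non-compactly supported profile $\Phi$ with polynomial decay tuned so that $\Phi^{1-p'}\abs{(-\triangle)^\sigma\Phi}^{p'}$ is integrable) without carrying it out. This is a genuine gap: the entire difficulty of the theorem is concentrated there, and the verification you defer — the pointwise bound $\abs{(-\triangle)^\sigma\Phi(x)}\lesssim\langle x\rangle^{-n-2\sigma}$ for a suitable non-compactly supported $\Phi$, the choice of its decay rate, the uniform-in-$R$ finiteness of the rescaled quotient, and the compatibility of all this with the exact scaling exponent $\gamma$ — is not routine and is not sketched.

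The paper resolves this point by an entirely different device: it keeps $\phi$ compactly supported, raises the test function to the power $p'+1$, and invokes the C\'ordoba--C\'ordoba/Ju pointwise inequality \eqref{eq:sigmaphi}, $(-\triangle)^\sigma\phi^{\ell}\leq\ell\,\phi^{\ell-1}(-\triangle)^\sigma\phi$ with $\ell=p'+1$. The factor $\phi_R^{p'}$ re-localizes the nonlocal term to $B_{R/K}$, after which an $\epsilon$-Young splitting (rather than H\"older) produces a small multiple of $I_{R,t}$ plus the explicit constant $C'R^{-2p'+2(1-\sigma)+n}K^{2\sigma p'-n}$. Note also that at the critical parabolic exponent $2p'=2(1-\sigma)+n$ this constant does \emph{not} tend to zero in $R$; the paper defeats it with the additional spatial shrinking parameter $K$ (possible precisely because $2\sigma p'-n<0$ for $\sigma\in(0,1/2]$), a mechanism absent from your plan. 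Your alternative critical-case argument (exploiting that $\eta'$ is supported in $[T_R/2,T_R]$ so that the $u$-dependent factor vanishes by absolute continuity) is plausible, but it again hinges on the unverified finiteness of the H\"older quotient for the fractional term. To close the proof along the paper's lines you should import \eqref{eq:sigmaphi} and the power $\psi_R^{p'+1}$; to close it along your own lines you must actually construct $\Phi$ and prove the decay and integrability claims, which is a nontrivial piece of analysis you have not supplied.
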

\begin{Rem} \label{Rem6}
{}From the point of view of the test function method the \emph{essential part} of~\eqref{eq:blow} is given by~$\mu (-\Delta)^{\sigma} u_t-\triangle u$ if~$\sigma\in(0,1/2]$ and by~$u_{tt}-\triangle u$ if~$\sigma\in[1/2,1]$. The other term is not relevant to determine the exponent of nonexistence given by~\eqref{eq:pblow}.
\\
If~$\sigma\in(0,1/2]$, then Theorem~\ref{Thm:blow} shows that the
exponents in~\eqref{eq:phalf} and~\eqref{eq:ppar} are
\emph{critical}. On the other hand, if~$\sigma\in(1/2,1]$, we see a
gap. If~$p>1+(1+2\sigma)/(n-1)$, then Theorems~\ref{Thm:Shinl}
and~\ref{Thm:parnl} give us existence of small data solution.
If~$1<p \leq  1+ 2/(n-1)$ we have no global existence in time. Since
the test function method does not yield an optimal result for
\emph{hyperbolic}-like models one could try to apply other methods,
for example, the functional method to close the gap.
\end{Rem}
\begin{proof}
The key tool for applying test function method with pseudo-differential operators like~$(-\triangle)^\sigma$ is the following result in Prop.2.3 of~\cite{Cordoba2} or Prop.3.3 of~\cite{Ju}. It is also used in~\cite{FK}. This result gives
\begin{equation}\label{eq:sigmaphi}
(-\triangle)^\sigma \phi^\ell \leq \ell \phi^{\ell-1} (-\triangle)^\sigma \phi
\end{equation}
for all~$\sigma\in(0,1]$ and~$\ell\geq1$, for any sufficiently regular, non-negative, decaying at infinity function~$\phi$ from the Schwartz space. 
\\
In facts, if~$\sigma=1$, then Theorem~\ref{Thm:blow} can be proved more easily. Nevertheless, for the sake of brevity, we give a proof which works for both, for the pseudo-differential case~$\sigma\in(0,1)$ and the differential case~$\sigma=1$ as well.
\\
We will use~\eqref{eq:sigmaphi} for~$\ell=p'+1$ together with
Young's inequality
\begin{equation}\label{eq:eyoung}
ab \leq \frac{a^p}p + \frac{b^{p'}}{p'}, \qquad \text{where~$p'$ is the conjugate of~$p$}.
\end{equation}
Let~$\eta\in\mathcal{C}_0^\infty([0,\infty),[0,1]),\,\phi\in\mathcal{C}_0^\infty(\Rn,[0,1])$ be such that $\eta(t) =1$ for any~$t\in [0,1/2]$ and $\eta(t)=0$
for any~$t\geq1$, $\phi(x) =1$ for any~$|x|\leq 1/2$ and $\phi(x)=0$ for any~$|x|\geq1$. We choose $\eta$ and $\phi$ such that
\[ \frac{\eta'(t)^2}{\eta(t)} + |\eta''(t)| \leq C \quad \text{for any~$t\in [1/2,1]$, \quad and} \quad \frac{|\nabla \phi(x)|^2}{\phi(x)} +
|\triangle\phi(x)| \leq C \quad \text{for any~$|x|\in [1/2,1]$,} \]
and~$\phi(x)$ is sufficiently regular so that~\eqref{eq:sigmaphi} holds. Moreover, we assume that~$\eta(t)$ is a decreasing function and that~$\phi=\phi(|x|)$
is a radial function with~$\phi(|x|)\leq\phi(|y|)$ for any~$x,y$ such that~$|x|\geq|y|$.
\\
Let $R$ be a large parameter in $[0,\infty)$. We define the test function
\[ \psi_R(t,x)\doteq \eta_R(t)\phi_R(x), \]
where
\[ \phi_R(x)\doteq \begin{cases}
\phi\left(Kx/R\right), & \text{if~$\sigma\in(0,1/2]$,} \\
\phi\left(x/R\right), & \text{if~$\sigma\in(1/2,1]$,}
\end{cases} \qquad \eta_R(t)\doteq \begin{cases}
\eta\left(t/R^{2(1-\sigma)}\right), & \text{if~$\sigma\in(0,1/2]$,} \\
\eta\left(t/R\right), & \text{if~$\sigma\in(1/2,1]$,}
\end{cases} \]
being~$K\geq1$ another large parameter that we will fix later.
\\
First, let us consider the case~$\sigma\in(0,1/2]$. Let $p'$ be the dual of $p$ and
\[ I_R\doteq \int_0^\infty\int_{\Rn} |u(t,x)|^p\psi_R(t,x)^{p'+1} dx\,dt = \int_{Q_R}|u(t,x)|^p\psi_R(t,x)^{p'+1} \,d(t,x), \]
where
\[ Q_R\doteq [0,R^{2(1-\sigma)}]\times B_{R/K}, \quad B_{R/K}\doteq\{x\in \R^n: |x|\le R/K\}.\]
Here~$u=u(t,x)$ is the global solution to~\eqref{eq:blow}. In particular, the function~$I_R$ is defined for any~$R>0$.
\\
Since~$\psi_R(t,x)\leq\psi_{S}(t,x)$ for any~$R\leq S$, we can apply Beppo-Levi convergence theorem. For this reason there exists
\[ I \doteq \lim_{R\to\infty} I_R = \int_0^\infty\int_{\Rn} |u(t,x)|^p dx\,dt \in [0,\infty].\]
We claim that~$I=0$ if~\eqref{eq:pblow} holds. Indeed, if~$I=0$, then~$u(t,x)=0$ for any~$(t,x)\in[0,\infty)\times\Rn$, therefore the proof follows from our claim.
\\
We multiply the equation in~\eqref{eq:blow} by~$\psi_R^{p'+1}$. Then, integrating by parts and by using the properties of Fourier transform, we get
\begin{align*}
0 \leq I_R
    & = -J_0 + J_1+J_2+J_3,\quad \text{where}\\
J_0
    & = \int_{B_{R/K}} \big(u_1(x) + \mu\,(-\triangle)^\sigma u_0(x) \big) \phi_R(x)^{p'+1} dx, \\
J_1
    & = \int_{Q_R} u(t,x) \, \phi_R(x)^{p'+1} \,\partial_t^2\bigl(\eta_R(t)^{p'+1}\bigr)\,d(t,x),\\
J_2
    & = -\int_0^R \partial_t\bigl(\eta_R(t)^{p'+1}\bigr) \,\int_{\Rn} u(t,x) \mu\,(-\triangle)^\sigma \bigl(\phi_R(x)^{p'+1}\bigr) dx\,dt, \\
J_3
    & = -\int_{Q_R} u(t,x) \,\eta_R(t)^{p'+1}\,\Delta\bigl(\phi_R^{p'+1}\bigr)\,d(t,x).
\end{align*}
In particular, to define~$J_2$ we used the Parseval's formula
    \[
    \int_{\R^n} v(x) (-\triangle)^\sigma u(x) dx = \int_{\R^n} \widehat{v}(\xi) |\xi|^{2\sigma} \widehat{u}(\xi) d\xi= \int_{\R^n} u(x) (-\triangle)^\sigma v(x) dx
    \]
which holds for any~$u,v \in H^{2\sigma}$. We remark that in~$J_2$
it appears the integral over~$[0,R]\times\Rn$ since
$(-\triangle)^\sigma$ is a non-local operator. To deal with it we
will use~\eqref{eq:sigmaphi}. We introduce the notations
\begin{gather*}
\hat{Q}_{R,t}\doteq [R^{2(1-\sigma)}/2, R^{2(1-\sigma)}]\times B_{R/K},\quad \hat{Q}_{R,x}\doteq[0,R^{2(1-\sigma)}]\times (B_{R/K}\setminus B_{R/(2K)}),\\
I_{R,t}\doteq\int_{\hat{Q}_{R,t}}|u(t,x)|^p\psi_R^{p'+1}(t,x)\,d(t,x), \quad I_{R,x}\doteq\int_{\hat{Q}_{R,x}}|u(t,x)|^p\psi_R^{p'+1}(t,x)\,d(t,x).
\end{gather*}
We can easily compute the measures of the sets $Q_R, \hat{Q}_{R,t}, \hat{Q}_{R,x}$ obtaining for all of them
\begin{equation}\label{eq:measQ}
|Q_R|, |\hat{Q}_{R,t}|, |\hat{Q}_{R,x}| \approx R^{2(1-\sigma)+n} K^{-n}.
\end{equation}
We shall estimate $J_1, J_2$ and $J_3$, respectively. Since
\[ \bigl|\partial_t^2 \bigl(\eta(t)^{p'+1}\bigr)\bigr| \leq \eta(t)^{p'}\,, \]
by using H\"older's inequality we can now estimate
\[ |J_1| \lesssim R^{-4(1-\sigma)} \left(\int_{\hat{Q}_{R,t}} |u(t,x)|^p \, \eta_R^{pp'}(t) \phi_{R,K}^{p\,(p'+1)} \,d(x,t) \right)^{\frac1p} \left( \int_{\hat{Q}_{R,t}} 1 d(t,x) \right)^{\frac1{p'}} \lesssim R^{-4(1-\sigma)}\, I_{R,t}^{\frac1p}\,|\hat{Q}_{R,t}|^{\frac1{p'}}.\]
Indeed, $pp' = (p')^2/(p'-1) \geq p'+1$ so that~$\eta(t)^{pp'}\leq\eta(t)^{p'+1}$, and~$p\,(p'+1)> p'+1$, hence~$\phi_{R,K}^{p\,(p'+1)}\leq \phi_{R,K}^{p'+1}$. Analogously, we can estimate
\[ |J_3|\lesssim (R/K)^{-2}\,I_{R,x}^{\frac1p}\,|\hat{Q}_{R,x}|^{\frac1{p'}}.\]
Using~\eqref{eq:measQ} we obtain
\begin{align*}
|J_1| & \lesssim I_{R,t}^{\frac1p}\,R^{-4(1-\sigma)+\frac{2(1-\sigma)+n}{p'}}\,K^{-\frac{n}{p'}}, \\
|J_3| & \lesssim I_{R,x}^{\frac1p}\,R^{-2+\frac{2(1-\sigma)+n}{p'}}\,K^{2-\frac{n}{p'}}.
\end{align*}
We remark that the estimate for~$J_1$ is better than the estimate for~$J_3$ since~$4(1-\sigma)\geq2$.
\\
We now focus our attention to~$J_2$. By using~\eqref{eq:sigmaphi}
and~\eqref{eq:eyoung} with
\[ a= |u(t,x)| \,\eta_R(t)^{p'+1/2}\phi_R^{p'-\frac1{p'}}, \quad b= C\,(p'+1)\,R^{-2(1-\sigma)} \,\phi_R^{\frac1{p'}} \, \mu\,|(-\triangle)^\sigma\phi_R(x)|, \]
and after defining
\[ C'\doteq \frac1{p'} \left( C\,(p'+1) \mu\right)^{p'}  \]
using~$u(t,x)\geq0$ together with
\[ 0\leq -\partial_t\bigl(\eta_R(t)^{p'+1}\bigr) \leq C\,\eta_R(t)^{p'+\frac12} \,, \]
we can estimate
\begin{align*}
J_2= & -\int_0^R \partial_t\bigl(\eta_R(t)^{p'+1}\bigr) \,\int_{\Rn} u(t,x) \mu\,(-\triangle)^\sigma \bigl(\phi_R(x)^{p'+1}\bigr) dx\,dt\\
& \leq -(p'+1)\int_0^R \partial_t\bigl(\eta_R(t)^{p'+1}\bigr) \,\int_{\Rn} u(t,x) \phi_R(x)^{p'}\mu\,(-\triangle)^\sigma \phi_R(x) dx\,dt\\
    & \leq C(p'+1) R^{-2(1-\sigma)} \int_{\hat{Q}_{R,t}} |u(t,x)| \, \eta_R^{p'+\frac12} (t) \phi_R(x)^{p'} \mu\,|(-\triangle)^\sigma\phi_R(x)| \,d(t,x)\\
    & \leq \frac1p \int_{\hat{Q}_{R,t}} |u(t,x)|^p \, \eta_R(t)^{(p'+1/2)\,p} \phi_R(x)^{(p'-\frac1{p'})\,p}\,d(t,x) \\ & \qquad + C'\, R^{-p'\,2(1-\sigma)} \int_{\hat{Q}_{R,t}} \phi_R(x)\,|(-\triangle)^\sigma\phi_R(x)|^{p'} \,d(t,x) \\
    & \leq \frac1p \int_{\hat{Q}_{R,t}} |u(t,x)|^p \, \eta_R(t)^{(p'+1/2)\,p}  \phi_R(x)^{(p'-\frac1{p'})\,p}\,d(t,x) \\ & \qquad + C'\, R^{-(p'-1)\,2(1-\sigma)} \int_{B_{R/K}} \phi_R(x)\,|(-\triangle)^\sigma\phi_R(x)|^{p'} \,dx \\
    & \leq \frac1p \int_{\hat{Q}_{R,t}} |u|^p \, \psi_R^{p'+1}(t,x)\,d(t,x) + C'\, R^{-(p'-1)\,2(1-\sigma)}
    (R/K)^{-2\sigma p'+n} \int_{B_1} \phi(x)\,|(-\triangle)^\sigma\phi(x)|^{p'} \,dx
\end{align*}
since $p(p'+1/2)\geq pp'\geq p'+1$ (as for~$J_1$), and
\[ \bigl(p'-\frac1{p'}\bigr) p = \bigl(p'-\frac1{p'}\bigr) \, \frac{p'}{p'-1} = \frac{(p')^2-1}{p'-1} = p'+1. \]
It is clear that
\[ \int_{\Rn} \phi(x)\,|(-\triangle)^\sigma\phi(x)|^{p'} \,dx = \int_{B_1} \phi(x)\,|(-\triangle)^\sigma\phi(x)|^{p'} \,dx  \]
is finite due to the regularity of~$\phi$. Therefore we obtain
\[ J_2 \leq \frac1p I_{R,t} + C' R^{-2p'+2(1-\sigma)+n}\,K^{2\sigma p'-n}. \]
Being~$I_{R,t}\leq I_R$ it follows that
\[ I_R (1-1/p) < -J_0 + C_K R^{-2+ \frac{2(1-\sigma)+n}{p'}} (I_{R,t}^{\frac1p}+ I_{R,x}^{\frac1p}) + C' R^{-2p'+ 2(1-\sigma)+n}\,
K^{2\sigma p'-n}. \]
We distinguish two cases, recalling that~$p\leq 1+2/(n-2\sigma)$ as
in~\eqref{eq:pblow} is equivalent to~$2p'\geq 2(1-\sigma)+n$. Let us
define~$C_{K,p}\doteq C_K/(1-1/p)$.
\\
First let~$2p'> 2(1-\sigma)+n$. In such a case we set~$K=1$. By
using the assumptions~\eqref{eq:blowdata} for the data $(u_0,u_1)$
it follows that there exists a large constant~$\overline{R}\geq1$
such that
\[ J_0 \geq C' R^{-2p'+ 2(1-\sigma)+n}  \]
for any~$R\geq\overline{R}$. Since~$I_{R,t},I_{R,x}\leq I_R$ and~$p>1$, it follows that
\[ I^{1-\frac1p} = \lim_{R\to\infty} I_R^{1-\frac1p} \leq 2C_{1,p}\, \lim_{R\to\infty} R^{-2+ \frac{2(1-\sigma)+n}{p'}} =0. \]
This concludes the proof. Now let~$2p'=2(1-\sigma)+n$, that is,
\[ I_R (1-1/p) \leq -J_0 + C_K (I_{R,t}^{\frac1p}+ I_{R,x}^{\frac1p}) + C' \,K^{2\sigma p'-n}. \]
We may now compute
\[ 2\sigma p'-n = (2(1-\sigma)+n)\,\sigma -n = (1-\sigma)(2\sigma -n) <0  \]
since we are in the case~$\sigma\in(0,1/2]$, and~$n\geq2$ if~$\sigma=1/2$ (see~\eqref{eq:pblow}). Therefore we can fix~$K\geq1$ large enough to satisfy
\[ C' K^{2\sigma p'-n}< \frac12 \int_{\Rn} \left(u_1(x) + \mu\,(-\triangle)^\sigma u_0(x) \right) dx. \]
After fixing the constant~$K$ and using the assumptions~\eqref{eq:blowdata} for the data $(u_0,u_1)$ it follows that there exists a large
constant~$\overline{R}=\overline{R}(K)\geq1$ such that
\[ J_0 \geq \frac12 \int_{\Rn} \bigl(u_1(x) + \mu\,(-\triangle)^\sigma u_0(x) \bigr) dx \]
for any~$R\geq \overline{R}$. Summarizing we obtained
\[ I_R < C_{K,p} (I_{R,t}^{\frac1p}+ I_{R,x}^{\frac1p}) \]
for any~$R\geq \overline{R}$. Since~$I_{R,t},I_{R,x}\leq I_R$ and~$p>1$ we obtain that
\[ I_R^{1-\frac1p} \leq C_{K,p}, \]
therefore~$I\leq C_{K,p}$ as well. Since we proved~$I<\infty$ by the absolute continuity of the Lebesgue integral it also follows
that~$I_{R,t},I_{R,x}\to0$ as~$R\to\infty$. Then
\[ I_R\leq C_{K,p} (I_{R,t}^{\frac1p}+ I_{R,x}^{\frac1p})\to 0 \quad \text{as~$R\to\infty$ too.}\]

Now let~$\sigma\in(1/2,1]$. In this case the proof is simpler. Indeed,
\[ Q_R\doteq [0,R]\times B_R, \quad B_R\doteq\{x\in \R^n: |x|\le R\},\]
and estimating~$J_1, J_2, J_3$ we get
\begin{align*}
|J_1| & \lesssim I_{R,t}^{\frac1p}\,R^{-2+\frac{n+1}{p'}}, \\
|J_3| & \lesssim I_{R,x}^{\frac1p}\,R^{-2+\frac{n+1}{p'}},\\
J_2 & \lesssim \epsilon(p'+1) I_{R,t} + C_\epsilon' R^{-(1+2\sigma)p'+n+1}.
\end{align*}
Now we have the same estimate for~$J_1$ and~$J_3$, whereas the estimate for~$J_2$ is better, in particular~$C_\epsilon' R^{-(1+2\sigma)p'+n+1}\to 0$
as~$R\to\infty$ since~$p\leq 1+2/(n-1)$ as in~\eqref{eq:pblow} is equivalent to~$2p'\geq n+1$. The proof easily follows.
\end{proof}
\begin{Example} \label{Exam1}
Let us consider the Cauchy problem
\begin{equation}
\label{eq:blow}
\begin{cases}
u_{tt} - \Delta u + 2\,(-\Delta)^{\frac12} u_t= |u|^p, & t\geq0, \ x\in\R^n,\\
u(0,x)= 0, \\
u_t(0,x)=u_1(x).
\end{cases}
\end{equation}
Let us assume for the non-vanishing~$u_1\in \mathcal{C}_0^\infty(\R^n)$ the condition $u_1(x)\geq0$ for any~$x\in\R^n$. If we suppose
\begin{equation}\label{eq:pblow}
1<p\leq 1+ \frac2{n-1},
\end{equation}
then there exists no global, sufficiently regular, solution to~\eqref{eq:blow}.
To apply Theorem \ref{Thm:blow} we have to show that under the above assumptions any (local or global) solution to~\eqref{eq:blow} is non-negative. Indeed,
following~\cite{NR} the solution to the linear problem
\begin{equation}
\label{eq:blowlin}
\begin{cases}
v_{tt} - \Delta v + 2\,(-\Delta)^{\frac12} v_t=0, & t\geq 0, \ x\in\R^n,\\
v(0,x)= 0, \\
v_t(0,x)=v_1(x),
\end{cases}
\end{equation}
is given by
\[ v (t,x) = t \mathcal{F}^{-1} \bigl( e^{-|\xi|t} \bigr) \ast_{(x)} v_1(x), \]
where we use
\[ \mathcal{F}^{-1} \bigl( e^{-|\xi|t} \bigr)=c_n\frac{t}{(|x|^2+t^2)^{\frac{n+1}{2}}},\,\,c_n >0. \]
By virtue of Duhamel's principle the solution to~\eqref{eq:blow} may be written as
\[ u(t,x) = t \mathcal{F}^{-1} \bigl( e^{-|\xi|t} \bigr) \ast_{(x)} u_1(x) + \int_0^t (t-s) \mathcal{F}^{-1}
\bigl( e^{-|\xi|(t-s)} \bigr) \ast_{(x)} |u(s,x)|^p\,ds. \]
Since $u_1(x)\geq0$ and~$t \mathcal{F}^{-1} \bigl( e^{-|\xi|t} \bigr)\geq0$ it follows that~$u(t,x)\geq0$. The application of Theorem \ref{Thm:blow}
completes the explanation of our example.
\end{Example}


\appendix

\section{The Gagliardo-Nirenberg inequality for fractional Sobolev spaces}

To prove Theorem~\ref{Thm:hypnl} we used the following result (see, for instance, \cite{Park}):
\begin{Lem}
Let~$n\geq1$ and~$\kappa\in(0,n/2)$ be a real number. Let $q\in[2,2n/(n-2\kappa)]$ and let~$u\in H^\kappa$. Then~$u\in L^q$ and
\[ \|u\|_{L^q} \leq C(n,\kappa,q) \, \|u\|_{L^2}^{1-\theta}\,\|u\|_{\dot{H}^\kappa}^\theta,\,  \]
where
\begin{equation}\label{eq:thetakq}
\theta= \frac{n}\kappa \left(\frac12-\frac1q\right).
\end{equation}
\end{Lem}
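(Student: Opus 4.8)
The plan is to deduce the inequality by interpolating the trivial endpoint $q=2$ (where $\theta=0$ and the statement reduces to an identity) against the critical Sobolev embedding $\dot{H}^\kappa(\Rn)\hookrightarrow L^{q^*}(\Rn)$, where $q^*\doteq 2n/(n-2\kappa)$ is the critical exponent corresponding to $\theta=1$. Note that $q^*\in(2,\infty)$ precisely because $\kappa\in(0,n/2)$, and that $1/q^*=1/2-\kappa/n$. By density of the Schwartz class in $H^\kappa$ it suffices to prove the stated bound for Schwartz $u$ and then pass to the limit, so I may freely manipulate Fourier transforms below.

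First I would establish the endpoint embedding. Setting $g\doteq(-\Delta)^{\kappa/2}u$, Plancherel's formula gives $\|g\|_{L^2}=\||\xi|^\kappa\widehat{u}\|_{L^2}=\|u\|_{\dot{H}^\kappa}$, so $g\in L^2$. Since $\widehat{u}(\xi)=|\xi|^{-\kappa}\widehat{g}(\xi)$ and the kernel $c_{n,\kappa}|x|^{\kappa-n}$ has Fourier transform $|\xi|^{-\kappa}$ for $0<\kappa<n$, the function $u$ coincides with the Riesz potential $u=I_\kappa g$, the convolution of $g$ with $c_{n,\kappa}|x|^{\kappa-n}$. The Hardy--Littlewood--Sobolev inequality then yields
\[ \|u\|_{L^{q^*}}=\|I_\kappa g\|_{L^{q^*}}\lesssim\|g\|_{L^2}=\|u\|_{\dot{H}^\kappa}, \]
since the exponents satisfy $1/q^*=1/2-\kappa/n$, which is exactly the scaling relation under which $I_\kappa$ maps $L^2$ boundedly into $L^{q^*}$. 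This is the case $\theta=1$ of the claim.

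For a general $q\in[2,q^*]$ I would write $1/q=(1-\theta)/2+\theta/q^*$ and check that this forces $\theta=\tfrac n\kappa(\tfrac12-\tfrac1q)$, in agreement with \eqref{eq:thetakq}; as $q$ ranges over $[2,q^*]$ the parameter $\theta$ ranges over $[0,1]$. The logarithmic convexity of the $L^p$ norms, a direct consequence of H\"older's inequality, then gives $\|u\|_{L^q}\leq\|u\|_{L^2}^{1-\theta}\|u\|_{L^{q^*}}^\theta$, and inserting the endpoint bound produces
\[ \|u\|_{L^q}\leq\|u\|_{L^2}^{1-\theta}\bigl(C\|u\|_{\dot{H}^\kappa}\bigr)^\theta=C^\theta\,\|u\|_{L^2}^{1-\theta}\|u\|_{\dot{H}^\kappa}^\theta, \]
which is the desired estimate with $C(n,\kappa,q)=C^\theta$. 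The only substantive analytic input is the Hardy--Littlewood--Sobolev inequality invoked at the endpoint; everything else is exponent bookkeeping and H\"older's inequality, so I expect the endpoint embedding to be the main obstacle. An alternative to the HLS argument would be a Littlewood--Paley decomposition combined with Bernstein's inequalities on dyadic frequency annuli, which reaches the same conclusion without the Riesz potential representation.
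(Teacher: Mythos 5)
Your proposal is correct and follows essentially the same route as the paper: the endpoint $L^{2n/(n-2\kappa)}$ bound via the Riesz potential representation $u=I_\kappa\bigl((-\Delta)^{\kappa/2}u\bigr)$ and the Hardy--Littlewood--Sobolev inequality, combined with H\"older interpolation between $L^2$ and that endpoint (the paper phrases the interpolation as a single H\"older application to $\||u|^q\|_{L^1}$ with conjugate exponents $p=2/(q(1-\theta))$ and $p'$, which is exactly your log-convexity of $L^p$ norms). The exponent bookkeeping matches \eqref{eq:thetakq} in both arguments.
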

We remark that~$\theta\in[0,1]$ and that~\eqref{eq:thetakq} is equivalent to
\begin{equation}\label{eq:thetakq2}
\theta \left(\frac12-\frac{\kappa}n\right) + \frac{1-\theta}2 = \frac1q.
\end{equation}
\begin{proof}
If~$q=2$ the statement is trivial. Let~$q\in(2,2n/(n-2\kappa)]$. We
put~$p\doteq 2/(q\,(1-\theta))$. Then $p\in(1,\infty]$ since
\[ (1-\theta)\,q = q-\frac{nq-2n}{2\kappa} = \frac{-q(n-2\kappa)+2n}{2\kappa} \in [0,2). \]
Let $p'=2/(2-q(1-\theta))\in[1,\infty)$ be its Sobolev conjugate.
Let us define~$p_0\doteq q\,\theta\,p'$. Using~\eqref{eq:thetakq2}
it follows $p_0=2n/(n-2\kappa)\in(2,\infty)$.
\\
By virtue of H\"older's inequality we are now in a position to
estimate
\[ \|u\|_{L^q}^q = \| |u|^q \|_{L^1} \leq \||u|^{q(1-\theta)}\|_{L^p} \, \||u|^{q\theta}\|_{L^{p'}} = \|u\|_{L^2}^{q(1-\theta)} \, \|u\|_{L^{p_0}}^{q\theta}. \]
Let~$f=(-\triangle)^{\frac{\kappa}2}u$, that is,
$\widehat{u}=|\xi|^\kappa\,\widehat{f}$. By using Riesz potential it
follows that
\[ u = I_\kappa f = C_\kappa\,\int_{\R^n} \frac{f(y)}{|x-y|^{n-\kappa}}\,dy, \]
therefore~$\|u\|_{L^{p_0}}\lesssim \|f\|_{L^2}=\|u\|_{\dot{H}^\kappa}$, by virtue of Hardy-Littlewood-Sobolev inequality. This concludes the proof.
\end{proof}


%
\end{document}